\documentclass[12pt]{article}
\ifdefined\usebigfont

\usepackage{times}
\usepackage[fontsize=13pt]{scrextend}
\usepackage[left=1.56in,right=1.56in,top=1.74in,bottom=1.74in]{geometry}
\pagenumbering{gobble}
\else
\usepackage[margin=1in]{geometry}
\fi

\usepackage{color}
\usepackage{amssymb,amsthm}
\usepackage{amsmath}
\usepackage{enumerate}
\usepackage{hyperref}
\usepackage{cite}
\usepackage{mathtools}

\numberwithin{equation}{section}
%%%%%%%%%% Math %%%%%%%%%%%%%%%

\newcommand{\inclu}[0] {\ar@{^{(}->}}

\newcommand{\spann}{\text{span}}

\newcommand{\gph}{{\rm gph}\,}

\newcommand{\dist}{{\rm dist}}
\newcommand{\R}{\mathbb{R}}

\newcommand{\EE}{\mathbb{E}}

\newcommand{\Null}{\mathrm{Null}}

\newcommand{\RR}{\mathbb{R}}

\newcommand{\conv}{\text{conv}\,}
\newcommand{\lf}{\operatornamewithlimits{liminf}}

\newcommand{\cF}{\mathcal{F}}

\newcommand{\cl}{\mathrm{cl}\,}

\newcommand{\cX}{\mathcal{X}}

%%%%%%%%% Convex Analysis %%%%%%%%%

\newcommand{\argmin}{\operatornamewithlimits{argmin}}
\newcommand{\lims}{\operatornamewithlimits{limsup}}

\newcommand{\proj}{{\rm proj}}

\newcommand{\NN}{\mathbb{N}}

%%%% Probability %%%%%%%%%%%%%%%%%

%%%%%%%% Theorems %%%%%%%%%%%%%
\newtheorem{thm}{Theorem}[section]
\newtheorem{definition}[thm]{Definition}
\newtheorem{proposition}[thm]{Proposition}
\newtheorem{lem}[thm]{Lemma}
\newtheorem{cor}[thm]{Corollary}

\newtheorem{assumption}{Assumption}

\theoremstyle{remark}
\newtheorem{claim}{Claim}

\usepackage{mathtools}
\DeclarePairedDelimiter{\dotp}{\langle}{\rangle}
\usepackage[boxruled]{algorithm2e}

\begin{document}
	
	\title{Stochastic subgradient method converges\\ on tame functions}
	%\subtitle{Do you have a subtitle?\\ If so, write it here}
	
	%\titlerunning{Short form of title}        % if too long for running head
	
	\author{Damek Davis\thanks{School of Operations Research and Information Engineering, Cornell University,
Ithaca, NY 14850, USA;
\texttt{people.orie.cornell.edu/dsd95/}.}\and 
Dmitriy Drusvyatskiy\thanks{Department of Mathematics, University of Washington, Seattle, WA 98195; \texttt{www.math.washington.edu/{\raise.17ex\hbox{$\scriptstyle\sim$}}ddrusv}. Research of Drusvyatskiy was supported by the AFOSR YIP award FA9550-15-1-0237 and by the NSF DMS   1651851 and CCF 1740551 awards.}			
\and  Sham Kakade\thanks{Departments of Statistics and Computer Science, University of Washington, Seattle, WA 98195; \texttt{homes.cs.washington.edu/{\raise.17ex\hbox{$\scriptstyle\sim$}}sham/}. Sham Kakade acknowledges funding from the Washington Research Foundation Fund for Innovation in Data-Intensive Discovery and the  NSF CCF 1740551 award.} \and Jason D. Lee\thanks{Data Science and Operations Department, Marshall School of Business, University of Southern California, Los Angeles, CA 90089; \texttt{www-bcf.usc.edu/{\raise.17ex\hbox{$\scriptstyle\sim$}}lee715}. JDL acknowledges funding from the ARO MURI Award W911NF-11-1-0303. }
			}

	\date{}
	\maketitle

\begin{abstract}
 This work considers the question: what convergence guarantees does the stochastic subgradient method have in the absence of smoothness and convexity? 
We prove that the stochastic subgradient method, on any semialgebraic locally Lipschitz function, produces limit points that are all first-order stationary. More generally, our result applies to any function with a Whitney stratifiable graph. In particular, this work endows the stochastic subgradient method, and its proximal extension, with rigorous convergence guarantees for a wide class of problems arising in data science---including all popular deep learning architectures.

\end{abstract}
\section{Introduction}
In this work, we study the long term behavior of the stochastic subgradient method on nonsmooth and nonconvex functions. Setting the stage, consider the optimization problem
\begin{equation*} %\label{eq:main_prop}
\min_{x\in \RR^d}~ f(x), 
\end{equation*}
where $f \colon \RR^d \rightarrow \RR$ is a locally Lipschitz continuous function. The stochastic subgradient method simply iterates the steps
\begin{equation}\label{eqn:stock_subgrad}
x_{k+1}=x_k-\alpha_k \Big(y_k +  \xi_k\Big)\qquad \textrm{with}\qquad y_k\in \partial f(x_k).%\qquad \textrm{for arbitrary}\qquad G(x_k\in F(x_k).$$
\end{equation}
Here $\partial f(x)$ denotes the Clarke subdifferential \cite{ClarkeTAMS}. Informally, the set $\partial f(x)$ is the convex hull of limits of gradients at nearby differentiable points. 
 In classical circumstances, the subdifferential reduces to more familiar objects. Namely, when $f$ is $C^1$-smooth at $x$, the subdifferential $\partial f(x)$ consists only of the gradient $\nabla f(x)$, while for convex functions, it reduces to the subdifferential in the sense of convex analysis. The positive sequence $\{\alpha_k\}_{k\geq 0}$ is user specified, and it controls the step-sizes of the algorithm. As is typical for stochastic subgradient methods, we will assume that this sequence is square summable but not summable, meaning $\sum_{k}\alpha_k=\infty$ and $\sum_{k}\alpha_k^2<\infty$.
Finally, the stochasticity is modeled by the random (noise) sequence $\{\xi_k\}_{k\geq 1}$. We make the standard assumption that conditioned on the past, each random variable $\xi_k$ has mean zero and its second moment grows at a controlled rate. 

 Though variants of the stochastic subgradient method \eqref{eqn:stock_subgrad} date back to Robbins-Monro's pioneering 1951 work~\cite{stoch_rob_monr}, their convergence behavior is still largely not understood in nonsmooth and nonconvex settings. In particular, the following question remains open.
\begin{quote}
\centering Does the (stochastic) subgradient method have any convergence guarantees on locally Lipschitz functions, which may be neither smooth nor convex?
\end{quote}
That this question remains unanswered is somewhat concerning as the
stochastic subgradient method forms a core numerical subroutine for
several widely used solvers, including Google's
TensorFlow~\cite{tensorflow2015-whitepaper} and the open source
PyTorch~\cite{paszke2017automatic} library. 

Convergence behavior of~\eqref{eqn:stock_subgrad} is well understood when applied to convex, smooth, and more generally, weakly convex problems. In these three cases, almost surely, every limit point $x^*$ of the iterate sequence is first-order critical \cite{Nurminskii1974}, meaning $0\in \partial f(x^*)$. Moreover, rates of convergence in terms of natural optimality/stationarity measures are available.
In summary, the rates are $\EE\left[ f(x_k) - \inf f\right]=O(k^{-1/2})$, $\EE\left[\|\nabla f(x_k)\|\right]=O(k^{-1/4})$, and $\EE\left[\|\nabla f_{1/(2\rho)}(x_k)\|\right]=O(k^{-1/4})$, for functions that are convex~\cite{complexity}, smooth~\cite{ghad}, and $\rho$-weakly convex~\cite{weak_conv_papr,model_paper}, respectively. 
In particular, the convergence guarantee above for $\rho$-weakly convex functions appeared only recently in \cite{weak_conv_papr,model_paper}, with the Moreau envelope $f_{1/(2\rho)}$ playing a central role.

Though widely applicable, these previous results on the convergence of
the stochastic subgradient method do not apply to even relatively
simple non-pathological functions, such as 
$f(x,y) = (|x|-|y|)^2$ and $f(x) = (1-\max\{x,0\})^2$. 
It is not only toy examples, however, that lack
convergence guarantees, but the entire class of deep neural networks
with nonsmooth activation functions (e.g., ReLU). Since such networks
are routinely trained in practice, it is worthwhile to understand if
indeed the iterates $x_k$ tend to a meaningful limit.

In this paper, we provide a positive answer to this question for a wide class of locally Lipschitz functions; indeed, the function class we consider is virtually exhaustive in data scientific contexts (see Corollary~\ref{cor:deep} for consequences in  deep learning). Aside from mild technical conditions, the only meaningful assumption we make is that $f$ strictly decreases along any trajectory $x(\cdot)$ of the differential inclusion $\dot x(t) \in -\partial f(x(t))$ emanating from a noncritical point. Under this assumption, a standard Lyapunov-type argument shows that every limit point of the stochastic subgradient method is critical for $f$, almost surely. Techniques of this type can be found for example in the  monograph of Kushner-Yin \cite[Theorem 5.2.1]{kush_yin} and the landmark papers of Bena\"\i m-Hofbauer-Sorin \cite{BHS,BHS2}. Here, we provide a self-contained treatment, which facilitates direct extensions to ``proximal'' variants of the stochastic subgradient method.\footnote{Concurrent to this work, the independent preprint \cite{indep_paper} also provides convergence guarantees for the stochastic projected subgradient method, under the assumption that the objective function is ``subdifferentially regular'' and the constraint set is convex. Subdifferential regularity rules out functions with downward kinks and cusps, such as deep networks with the Relu($\cdot$)  activation functions. Besides subsuming the subdifferentially regular case, the results of the current paper apply to the broad class of Whitney stratifiable functions, which includes all popular deep network architectures.} In particular, our analysis follows closely the recent work of Duchi-Ruan \cite[Section 3.4.1]{duchi_ruan} on convex composite minimization.

The main question that remains therefore is which functions  decrease along the continuous subgradient curves. Let us look for inspiration at convex functions, which are well-known to satisfy this property \cite{Brezis,bruck}. Indeed, if $f$ is convex and $x\colon [0,\infty)\to\R$ is any absolutely continuous curve, then the ``chain rule'' holds:
\begin{equation}\label{eqn:main_chain_rule}
\tfrac{d}{dt}(f\circ x)=\langle \partial f(x),\dot{x} \rangle \qquad \textrm{for a.e. }t\geq 0.
\end{equation}
An elementary linear algebraic argument then shows that if $x$ satisfies $\dot x(t)\in -\partial f(x(t))$ a.e.,
then automatically $-\dot x(t)$ is the minimal norm element of $\partial f(x(t))$. Therefore, integrating \eqref{eqn:main_chain_rule} yields the desired descent guarantee
\begin{equation}\label{eqn:descent}
f(x(0))-f(x(t))=\int^t_{\tau=0} \dist^2(0;\partial f(x(\tau))) \qquad \textrm{for all }t\geq 0.
\end{equation}
Evidently, exactly the same argument yields the chain rule \eqref{eqn:main_chain_rule} for {\em subdifferentially regular} functions. These are the functions $f$ such that each subgradient $v\in \partial f(x)$  defines a linear lower-estimator of $f$ up to first-order; see for example \cite[Section 2.4]{clarke2008nonsmooth} or \cite[Definition 7.25]{RW98}. Nonetheless, subdifferentially regular functions preclude ``downwards cusps'', and therefore still do not capture such simple examples as $f(x) = (1-\max\{x,0\})^2$. It is worthwhile to mention that one can not expect \eqref{eqn:descent} to always hold. Indeed, there are pathological locally Lipschitz functions $f$ that do not satisfy \eqref{eqn:descent}; one example is the  univariate 1-Lipschitz  function whose Clarke subdifferential is the unit interval at every point \cite{theory_subgrad,gen_lip}.

In this work, we isolate a different structural property on the function $f$, which guarantees the validity of  \eqref{eqn:main_chain_rule} and therefore of the descent condition \eqref{eqn:descent}. We will assume that the graph of the function $f$ admits a partition into finitely many smooth manifolds, which fit together in a regular pattern. Formally, we require the graph of $f$ to admit a so-called  Whitney stratification, and we will call such functions Whitney stratifiable.
Whitney stratifications have already figured prominently in optimization, beginning with the seminal work \cite{bolte2007clarke}.
An important subclass of Whitney stratifiable functions consists of semi-algebraic functions \cite{loja_semi} -- meaning those whose graphs can be written as a finite union of sets each defined by finitely many polynomial  inequalities. Semialgebraicity is preserved under all the typical functional operations in optimization (e.g. sums, compositions, inf-projections)
and  therefore semi-algebraic functions are usually easy to recognize. More generally still, ``semianalytic'' functions \cite{loja_semi} and those that are ``definable in an o-minimal structure'' are Whitney stratifiable \cite{Dries-Miller96}. The latter function class, in particular, shares all the robustness and analytic properties of semi-algebraic functions, while encompassing many more examples. Case in point, Wilkie \cite{exp_omin} famously showed that there is an o-minimal structure that contains both the exponential $x\mapsto e^x$ and all semi-algebraic functions.\footnote{The term ``tame'' used in the title has a technical meaning. Tame sets are those whose intersection with any ball is definable in some o-minimal structure. The manuscript \cite{Ioffe2008} provides a nice exposition on the role of tame sets and functions in optimization.}

The key observation for us, which originates in \cite[Section 5.1]{DIL}, is that any locally Lipschitz Whitney stratifiable function necessarily satisfies the chain rule \eqref{eqn:main_chain_rule} along any absolutely continuous curve. Consequently, the descent guarantee \eqref{eqn:descent} holds along any subgradient trajectory, and our convergence guarantees for the stochastic subgradient method become applicable.   
 Since the composition of two definable functions is definable, it follows immediately from Wilkie's o-minimal structure that nonsmooth deep neural networks built from definable pieces---such as quadratics $t^2$, hinge losses $\max\{0,t\}$, and log-exp $\log(1+e^t)$ functions---are themselves definable. Hence, the results of this paper endow stochastic subgradient methods, applied to definable deep networks, with rigorous convergence guarantees.

Validity of the chain rule  \eqref{eqn:main_chain_rule} for Whitney stratifiable functions is not new. It was already proved in \cite[Section 5.1]{DIL} for semi-algebraic functions, though identical arguments hold more broadly for Whitney stratifiable functions. These results, however, are somewhat hidden in the paper \cite{DIL}, which is possibly why they have thus far been  underutilized. In this manuscript, we provide a self-contained review of the material from \cite[Section 5.1]{DIL}, highlighting only the most essential ingredients and streamlining some of the arguments.

Though the discussion above is for unconstrained problems, the techniques we develop apply much more broadly to constrained problems of the form
$$\min_{x\in \cX}~ f(x)+g(x).$$
Here $f$ and $g$ are locally-Lipschitz continuous functions and $\cX$ is an arbitrary closed set. The popular {\em proximal stochastic subgradient method} simply iterates the steps
\begin{equation} %\label{eqn:stoc_prox_subgrad}
\left\{\begin{aligned}
&\textrm{Sample an estimator }\zeta_k \textrm{ of } \partial f(x_k)\\
&\textrm{Select }x_{k+1} \in \argmin_{x\in \cX} ~\left\{\langle \zeta_k,x\rangle+g(x) +\tfrac{1}{2\alpha_k}\|x-x_k\|^2\right\}
\end{aligned}\right\}.
\end{equation}
 Combining our techniques with those in \cite{duchi_ruan} quickly yields subsequential convergence guarantees for this algorithm. Note that we impose no convexity assumptions on $f$, $g$, or $\cX$.

 \indent The outline of this paper is as follows. In Section~\ref{sec:preliminaries}, we fix the notation for the rest of the manuscript. Section~\ref{differential_inclusions} provides a self-contained treatment of asymptotic  consistency for discrete approximations of differential inclusions. In Section~\ref{section:subgradient}, we specialize the results of the previous section to the stochastic subgradient method.
  Finally, in Section~\ref{sec:verifying}, we verify the sufficient conditions for subsequential convergence   for a broad class of locally Lipschitz functions, including those that are subdifferentially regular and Whitney stratifiable.  In particular, we specialize our results to deep learning settings in Corollary~\ref{cor:deep}. In the final Section~\ref{sec:prox_ext}, we extend the results of the previous sections to the proximal setting.

\section{Preliminaries}\label{sec:preliminaries}
Throughout, we will mostly use standard notation on differential inclusions, as set out for example in the monographs of Borkar~\cite{Borkar}, Clarke-Ledyaev-Stern-Wolenski \cite{clarke2008nonsmooth}, and Smirnov \cite{intro_diff_inc}. We will always equip the Euclidean space $\R^d$ with an inner product $\langle\cdot,\cdot \rangle$ and the induced norm $\|x\|:=\sqrt{\langle x,x\rangle}$. The distance of a point $x$ to a set $Q\subset\R^d$ will be written as $\dist(x;Q):=\min_{y\in Q} \|y-x\|$. The indicator function of $Q$, denoted $\delta_Q$, is defined to be zero on $Q$ and $+\infty$ off it.
The symbol $\mathcal{B}$ will denote the closed unit ball in $\R^d$,  while $\mathcal{B}_{\varepsilon}(x)$ will stand for the closed ball of radius of $\varepsilon>0$ around $x$. We will use $\R_+$ to denote the set of nonnegative real numbers.

\subsection{Absolutely continuous curves}
Any continuous function $x\colon \R_+\to\R^d$ is called a curve in $\R^d$. All curves in $\R^d$ comprise the set $\mathcal{C}(\R_+,\R^d)$. We will say that a sequence of function $f_k$ converges to $f$ in $\mathcal{C}(\R_+,\R^d)$ if $f_k$ converge to $f$ uniformly on compact intervals, that is, for all $T>0$, we have
 $$\lim_{k\to\infty} \sup_{t\in [0,T]}\|f_k(t)-f(t)\|=0.$$
Recall that a curve $x\colon\R_+\to\R^d$ is absolutely continuous if there exists a map $y\colon\R_+\to\R^d$ that is integrable on any compact interval and satisfies $$x(t)=x(0)+\int_{0}^t y(\tau) \,d\tau\qquad\textrm{for all }t\geq 0.$$ Moreover, if this is the case, then equality $y(t)=\dot x(t)$ holds for a.e. $t\geq 0$. Henceforth, for brevity, we will call absolutely continuous curves {\em arcs}. We will often use the  observation that if $f\colon\R^d\to\R$ is locally Lipschitz continuous and $x$ is an arc, then the composition $f\circ x$ is absolutely continuous.

\subsection{Set-valued maps and the Clarke subdifferential}
A set-valued map $G\colon \cX\rightrightarrows\R^m$ is a mapping from a set $\cX\subseteq\R^d$ to the powerset of $\R^m$. 
Thus $G(x)$ is a subset of $\R^m$, for each $x\in \cX$. We will use the notation $$G^{-1}(v):=\{x\in \cX:v\in G(x)\}$$ for the preimage of a vector $v\in \R^m$. 
The map $G$ is  {\em outer-semicontinuous} at a point $x\in \cX$ if for any sequences $x_i\xrightarrow[]{\cX} x$ and $v_i\in G(x_i)$  converging to some vector $v\in\R^m$, the inclusion $v\in G(x)$ holds.

The most important set-valued map for our work will be the generalized derivative in the sense of Clarke \cite{ClarkeTAMS} -- a notion we now review. 
Consider a locally Lipschitz continuous function $f\colon\R^d\to\R$.
The well-known Rademacher's theorem guarantees that $f$ is differentiable almost everywhere. Taking this into account, the {\em Clarke subdifferential} of $f$ at any point $x$ is the set \cite[Theorem 8.1]{clarke2008nonsmooth}
$$\partial f(x):=\conv\left\{\lim_{i\to\infty} \nabla f(x_i): x_i\xrightarrow[]{\Omega} x\right\},$$
where $\Omega$ is any full-measure subset of $\R^d$ such that $f$ is differentiable at each of its points. 
It is standard that the map $x\mapsto \partial f(x)$ is outer-semicontinuous and its images $\partial f(x)$ are nonempty, compact,  convex sets for each $x\in \R^d$; see for example \cite[Proposition 1.5 (a,e)]{clarke2008nonsmooth}.

Analogously to the smooth setting, a point $x\in\R^d$ is called {\em (Clarke) critical} for $f$ whenever the inclusion $0\in \partial f(x)$ holds. Equivalently, these are the points at which the Clarke directional derivative is nonnegative in every direction \cite[Section 2.1]{clarke2008nonsmooth}. A real number $r\in\R$ is called a {\em critical value} of $f$ if there exists a critical point $x$ satisfying $r=f(x)$.

\section{Differential inclusions and discrete approximations}\label{differential_inclusions}
In this section, we discuss the asymptotic behavior of discrete approximations of differential inclusions. All the elements of the analysis we present, in varying generality, can be found in the works of Bena\"\i m-Hofbauer-Sorin \cite{BHS,BHS2}, Borkar~\cite{Borkar}, and Duchi-Ruan \cite{duchi_ruan}. Out of these, we most closely follow the work of Duchi-Ruan \cite{duchi_ruan}.

\subsection{Functional convergence of discrete approximations}
Let $\cX$ be a closed set and let $G\colon \cX\rightrightarrows\R^d$ be a set-valued map. Then an arc $x\colon\R_+\to\R^d$ is called a {\em trajectory} of $G$ if it satisfies the differential inclusion
\begin{equation}\label{eqn:diff_incl}
\dot x(t) \in G(x(t)) \qquad \textrm{for a.e.}~ t \geq 0.
\end{equation}
Notice that the image of any arc $x$ is automatically contained in $\cX$, since arcs are continuous and  $\cX$ is closed. In this work, we will primarily focus on iterative algorithms that aim to asymptotically track a trajectory of the differential inclusion \eqref{eqn:diff_incl} using a noisy discretization with vanishing step-sizes. Though our discussion allows for an arbitrary set-valued map $G$, the reader should keep in mind that the most important example for us will be  $G=-\partial f$, where $f$ is a locally Lipschitz function.

Throughout, we will consider the following iteration sequence: 
\begin{equation}\label{eqn:Euler_rec}
x_{k+1}=x_k+\alpha_k (y_k +  \xi_k).
\end{equation}
Here $\alpha_k>0$ is a sequence of step-sizes, $y_k$ should be thought of as an approximate evaluation of $G$ at some point near $x_k$, and  $\xi_k$ is a  sequence of ``errors".

Our immediate goal is to isolate reasonable conditions, under which 
the sequence $\{x_k\}$ asymptotically tracks a trajectory of the differential inclusion \eqref{eqn:diff_incl}. Following the work of Duchi-Ruan \cite{duchi_ruan} on stochastic approximation, we stipulate the following assumptions. 
\begin{assumption}[Standing assumptions]\label{ass:duchi_assumption}
~
\begin{enumerate}
\item \label{item:iterates_in_Q} All limit points of $\{x_{k}\}$ lie in $\cX$.
\item \label{item:gradients_bounded} The iterates are bounded, i.e., $\sup_{k \geq 1} \|x_k\| < \infty$ and $\sup_{k \geq 1} \|y_k\| < \infty$.
\item \label{item:steps_duchi}The sequence $\{\alpha_k\}$ is nonnegative, square summable, but not summable: $$\alpha_k\geq 0, \qquad\sum_{k=1}^{\infty}\alpha_k=\infty,\qquad \textrm{and}\qquad \sum_{k=1}^{\infty}\alpha_k^2<\infty.$$
\item \label{item:noise}The weighted noise sequence is convergent: $\sum_{k = 1}^n \alpha_k \xi_{k} \rightarrow v$ for some $v$ as $k \rightarrow \infty$.
\item \label{item:average_OSC} 
For any unbounded increasing sequence $\{k_j\}\subset \NN$ such that $x_{k_j}$ converges to some point $\bar x$, it holds:
\begin{align*}
\lim_{n \rightarrow \infty}\dist\left(\frac{1}{n} \sum_{j = 1}^n y_{k_j}, G(\bar x)\right) = 0.
\end{align*}
\end{enumerate}
\end{assumption}

Some comments are in order. Conditions~\ref{item:iterates_in_Q}, \ref{item:gradients_bounded}, and \ref{item:steps_duchi}  are in some sense minimal, though the boundedness condition must be checked for each particular algorithm.  Condition \ref{item:noise} guarantees that the noise sequence $\xi_k$ does not grow too quickly relative to the rate at which $\alpha_k$ decrease. The key Condition \ref{item:average_OSC} summarizes the way in which the values $y_k$ are approximate evaluations of $G$, up to convexification.

 To formalize the idea of asymptotic approximation, let us define the time points $t_0=0$ and  $t_m=\sum_{k=1}^{m-1} \alpha_k$, for $m\geq 1$. Let $x(\cdot)$ now be the linear interpolation of the discrete path:
\begin{equation}\label{eqn:desr_path}
x(t):=x_k+\frac{t-t_k}{t_{k+1}-t_k}(x_{k+1}-x_k)\qquad\textrm{ for }t\in [t_k,t_{k+1}).
\end{equation}
For each $\tau\geq 0$, define 
 the time-shifted curve $x^{\tau}(\cdot)=x(\tau+\cdot)$. 

The following result of Duchi-Ruan~\cite[Theorem 2]{duchi_ruan} shows that under the above conditions, for any sequence $\tau_k\to\infty$, the shifted curves $\{x^{\tau_k}\}$ subsequentially converge in $\mathcal{C}(\R_+,\R^d)$ to a trajectory of \eqref{eqn:diff_incl}. Results of this type under more stringent assumptions, and with similar arguments, have previously appeared for example in Bena\"\i m-Hofbauer-Sorin \cite{BHS,BHS2} and Borkar~\cite{Borkar}. 

\begin{thm}[Functional approximation]\label{thm:duchi_arzela}
Suppose that Assumption~\ref{ass:duchi_assumption} holds. Then for any sequence $\{\tau_k\}_{k=1}^\infty  \subseteq \RR_+$, the set of functions $\{x^{\tau_k}(\cdot)\}$ is relatively compact in $\mathcal{C}(\R_+,\R^d)$. If in addition $\tau_k \rightarrow \infty$ as $k \rightarrow \infty$, all limit points $z(\cdot)$ of $\{x^{\tau_k}(\cdot)\}$ in $\mathcal{C}(\R_+,\R^d)$ are 
trajectories of the differential inclusion \eqref{eqn:diff_incl}.
\end{thm}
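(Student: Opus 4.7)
The plan is to apply the Arzel\`a--Ascoli theorem on $\mathcal{C}([0,T],\R^d)$ for arbitrary $T>0$. Pointwise boundedness of $\{x^{\tau_k}\}$ is immediate from Assumption~\ref{ass:duchi_assumption}(\ref{item:gradients_bounded}). For equicontinuity, telescoping the recursion \eqref{eqn:Euler_rec} yields, for indices $n<m$,
\begin{equation*}
x(t_m)-x(t_n) \;=\; \sum_{i=n}^{m-1}\alpha_i\, y_i \;+\; (V_{m-1}-V_{n-1}),\qquad V_\ell := \sum_{k=1}^{\ell}\alpha_k\xi_k.
\end{equation*}
The first sum is bounded by $C(t_m-t_n)$ using the uniform bound on $\{y_k\}$, while $\{V_\ell\}$ is Cauchy by Assumption~\ref{ass:duchi_assumption}(\ref{item:noise}); boundary corrections from the piecewise linear interpolation are of order $\alpha_k+\alpha_k\|\xi_k\|=o(1)$. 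For an arbitrary input sequence $\{\tau_k\}$, I would pass to a sub-subsequence that is either bounded (in which case uniform continuity of $x$ on a fixed compact interval gives convergence) or diverges to infinity (in which case the Cauchy property of $\{V_\ell\}$ renders the noise contribution uniformly small on $[0,T]$). Arzel\`a--Ascoli then delivers a convergent sub-subsequence.

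\textbf{Limits are trajectories.} Assume henceforth $\tau_k\to\infty$ and $x^{\tau_k}\to z$ uniformly on compacts, after extraction. The estimates above yield uniform absolute continuity of $\{x^{\tau_k}\}$ on every compact interval, so $z$ is absolutely continuous and $\dot z(t)$ exists a.e. I would prove $\dot z(t)\in G(z(t))$ at each Lebesgue point $t$ by a double limit. Fix such $t$ and a small $h>0$, and let $B_k=\{i:t_i\in[\tau_k+t,\tau_k+t+h]\}$. Telescoping gives
\begin{equation*}
\frac{x^{\tau_k}(t+h)-x^{\tau_k}(t)}{h} \;=\; \frac{1}{h}\sum_{i\in B_k}\alpha_i\, y_i \;+\; \frac{V_{m_k-1}-V_{n_k-1}}{h} \;+\; r_k,
\end{equation*}
with $r_k\to 0$ and the middle term vanishing as $\tau_k\to\infty$. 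Passing to the limit in $k$ identifies $(z(t+h)-z(t))/h$ with $\lim_k \tfrac{1}{h}\sum_{i\in B_k}\alpha_i y_i$. Once these weighted block averages are shown to lie in the closed convex set $G(z(t))$, sending $h\to 0$ through Lebesgue points completes the argument.

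\textbf{Main obstacle.} The crux is invoking Assumption~\ref{ass:duchi_assumption}(\ref{item:average_OSC}), which controls \emph{unweighted} averages along an index subsequence whose $x$-values converge \emph{exactly} to some $\bar x$; my block averages are instead $\alpha_i$-weighted, and within each $B_k$ the iterates $x_i$ only approach $z(t)$ up to the continuity modulus of $z$ on $[t,t+h]$. To bridge the gap, for each $\varepsilon>0$ I would select $h$ small enough that $\|z(s)-z(t)\|<\varepsilon$ on $[t,t+h]$, and then $k$ large so that $\|x_i-z(t)\|<2\varepsilon$ throughout $B_k$. A diagonal extraction over $\varepsilon\to 0$ then produces a single increasing index stream along which $x_i\to z(t)$ exactly, so that Assumption~\ref{ass:duchi_assumption}(\ref{item:average_OSC}) places the associated unweighted averages in $G(z(t))$. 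Converting from unweighted to weighted block averages relies on the closed convexity of $G(z(t))$ together with the vanishing $\alpha_i\to 0$ inside each block (which forces every individual weight $\alpha_i/h$ to be small, so the weighted average is close to a uniform convex combination); any residual perturbation is absorbed by the closedness of $G(z(t))$ (and, in the target application $G=-\partial f$, by outer semicontinuity of the Clarke subdifferential).
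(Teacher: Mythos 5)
The paper does not actually prove Theorem~\ref{thm:duchi_arzela}; it quotes the statement from Duchi--Ruan \cite[Theorem 2]{duchi_ruan}, so there is no in-paper argument to compare against. Your skeleton is the standard one and matches the cited proof in spirit: Arzel\`a--Ascoli for relative compactness, with the drift bounded by $\sup_k\|y_k\|\,(t_m-t_n)$ and the noise controlled by the Cauchy property of $V_\ell=\sum_{k\le \ell}\alpha_k\xi_k$; then identification of $\dot z(t)$ through block averages and Assumption~\ref{ass:duchi_assumption}(\ref{item:average_OSC}). The compactness half, including the split into bounded and divergent time-shifts, is essentially complete.

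The trajectory-identification half has a genuine gap, exactly at the step you flag as the main obstacle. For fixed $h>0$ the iterates $x_i$ with $i\in B_k$ do \emph{not} converge to $z(t)$ as $k\to\infty$; they only enter a ball whose radius is comparable to the oscillation of $z$ on $[t,t+h]$. Your diagonal extraction over $\varepsilon\to 0$ produces a single index stream converging to $z(t)$, but the object you must control is the iterated limit $\lim_{h\to 0}\lim_{k\to\infty}\frac1h\sum_{i\in B_k}\alpha_i y_i$, whose inner limit at fixed $h$ involves \emph{every} index of every block, not only those on the diagonal stream; the diagonal conflates the two limits in the wrong order. In addition, Assumption~\ref{ass:duchi_assumption}(\ref{item:average_OSC}) controls running averages of \emph{initial segments} of a subsequence, which says nothing directly about averages over windows deep in the sequence, and your proposed repair (``$\alpha_i\to 0$ forces the weighted average to be close to a uniform convex combination'') is not correct: individually small weights need not be approximately equal. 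The standard way out is to exploit the fact that item~\ref{item:average_OSC} is quantified over \emph{all} subsequences: if $x_{k_j}\to\bar x$ and $y_{k_{j_l}}\to y^*$ along a sub-subsequence, the Ces\`aro averages along that sub-subsequence converge to $y^*$, so $y^*\in\cl G(\bar x)$; combined with boundedness of $\{y_k\}$ this upgrades the assumption to the pointwise statement $\dist(y_{k_j},\cl G(\bar x))\to 0$. With that in hand, $\frac1h\sum_{i\in B_k}\alpha_i y_i$ lies, up to $o(1)$ as $k\to\infty$, in $\cl\conv\bigcup\{G(x'):x'\in\cB_{\delta(h)}(z(t))\cap\cX\}$, and the final passage $h\to 0$ requires that these convexified neighborhoods shrink to $G(z(t))$ --- i.e.\ outer semicontinuity together with closed convex values of $G$, which hold for the intended application $G=-\partial f$ but should be stated explicitly as the hypotheses being used.
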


\subsection{Subsequential convergence to equilibrium points}
A primary application of the discrete process \eqref{eqn:Euler_rec} is to solve the inclusion
\begin{equation}\label{eqn:incl_solve}
0\in G(z).
\end{equation}
Indeed, one can consider the points satisfying \eqref{eqn:incl_solve}
as equilibrium (constant) trajectories of the differential inclusion \eqref{eqn:diff_incl}.
Ideally, one would like to find conditions guaranteeing that every limit point $\bar x$ of the sequence $\{x_k\}$, produced by the recursion \eqref{eqn:Euler_rec}, satisfies the desired inclusion \eqref{eqn:incl_solve}.
Making such a leap rigorous typically relies on combining the asymptotic convergence guarantee of Theorem~\ref{thm:duchi_arzela} with existence of a Lyapunov-like function $\varphi(\cdot)$ for the continuous dynamics; see e.g. \cite{BHS,BHS2}. Let us therefore introduce the following assumption.

\begin{assumption}[Lyapunov condition]\label{ass:b_general}
There exists a continuous function $\varphi \colon \RR^d \rightarrow \RR$, which is bounded from  below, and such that the following two properties hold.
\begin{enumerate}
	\item\label{it:dense0} {\bf (Weak Sard)} For a dense set of values $r\in \R$, the intersection $\varphi^{-1}(r)\cap G^{-1}(0)$ is empty.
	\item\label{ass:b2} {\bf (Descent)}	Whenever $z\colon\R_+\to\R^d$ is a trajectory of the differential inclusion \eqref{eqn:diff_incl} and $0 \notin G(z(0))$, there exists a real $T>0$ satisfying 
$$\varphi(z(T))<\sup_{t\in [0,T]} \varphi(z(t))\leq \varphi(z(0)).$$ 
\end{enumerate}	
\end{assumption}

The weak Sard property is reminiscent of the celebrated Sard's theorem in real analysis. Indeed, consider the classical setting $G=-\nabla f$ for a smooth function $f\colon\R^d\to \R$. Then the weak Sard property stipulates that the set of noncritical values of $f$ is dense in $\R$. By Sard's theorem, this is indeed the case, as long as $f$ is $C^d$ smooth. Indeed, Sard's theorem guarantees
the much stronger property that the set of noncritical values has full measure. We will comment more on the weak Sard property in Section~\ref{section:subgradient}, once we shift focus to optimization problems. The descent property, says that $\varphi$ eventually strictly decreases along the trajectories of the differential inclusion $\dot z \in G(z)$ emanating from any  non-equilibrium point. This Lyapunov-type condition is standard in the literature and we will  verify that it holds for a large class of optimization problems in Section~\ref{sec:verifying}.

As we have alluded to above, the following theorem shows that under Assumptions \ref{ass:duchi_assumption} and \ref{ass:b_general}, every limit point $\bar x$  of $\{x_k\}$ indeed satisfies the inclusion $0\in G(\bar x)$. We were unable to find this result stated and proved in this generality. Therefore, we record a complete proof in Section~\ref{sec:proof_main_thm}. The idea of the proof is of course not new, and can already be seen  for example in \cite{BHS,duchi_ruan,kush_yin}.  Upon first reading, the reader can safely skip to Section~\ref{section:subgradient}.

\begin{thm}\label{thm:main_cray_thm_general}
	Suppose that Assumptions \ref{ass:duchi_assumption} and \ref{ass:b_general} hold. Then every limit point of $\{x_{k}\}_{k\geq 1}$ lies in $G^{-1}(0)$ and the function values $\{\varphi(x_k)\}_{k\geq 1}$ converge.
\end{thm}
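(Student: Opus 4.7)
The plan is to analyze the $\omega$-limit set $\Omega$ of $\{x_k\}$, i.e.\ its accumulation points, by combining Theorem~\ref{thm:duchi_arzela} (to derive forward invariance of $\Omega$ under \eqref{eqn:diff_incl}) with the Lyapunov descent in Assumption~\ref{ass:b_general}, broadly following the Bena\"\i m--Hofbauer--Sorin framework \cite{BHS,BHS2,duchi_ruan}.

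First I would collect preliminaries. From $\sum_k\alpha_k^2<\infty$ in Assumption~\ref{ass:duchi_assumption}(\ref{item:steps_duchi}), one has $\alpha_k\to 0$; combined with boundedness of $\{y_k\}$ from (\ref{item:gradients_bounded}) and $\alpha_k\xi_k\to 0$ (the tail differences of the convergent series $\sum_k\alpha_k\xi_k$ in (\ref{item:noise})), this yields $\|x_{k+1}-x_k\|\to 0$. Uniform continuity of $\varphi$ on a compact neighborhood of the iterates then gives $|\varphi(x_{k+1})-\varphi(x_k)|\to 0$, and also makes $\Omega$ compact, nonempty, and connected (a standard consequence of vanishing step sizes on a bounded sequence). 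Invariance of $\Omega$ under $G$ follows from Theorem~\ref{thm:duchi_arzela}: for $\bar x\in\Omega$ with $x_{k_j}\to\bar x$, apply the theorem with shifts $\tau_j:=t_{k_j}\to\infty$; along a subsequence $x^{\tau_j}\to z$ in $\mathcal{C}(\R_+,\R^d)$ for some trajectory $z$ with $z(0)=\bar x$, and for each $t\geq 0$ the value $z(t)$ is the limit of $x^{\tau_j}(t)$, which is $o(1)$-close to an iterate $x_{\ell_j}$ with $\ell_j\to\infty$. Hence $z(t)\in\Omega$.

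Next I would prove $\varphi|_\Omega$ is constant, which is equivalent to convergence of $\varphi(x_k)$. Let $L:=\varphi(\Omega)$, a closed interval by connectedness. Suppose for contradiction $L=[r_-,r_+]$ is nondegenerate. A first easy observation (not requiring weak Sard): any $\bar x_-\in\Omega$ realizing $\varphi(\bar x_-)=r_-$ must lie in $G^{-1}(0)$, for otherwise invariance plus Assumption~\ref{ass:b_general}(\ref{ass:b2}) would produce a trajectory $z$ in $\Omega$ with $\varphi(z(T))<r_-$, contradicting $\varphi\circ z\subseteq L$. The harder step uses weak Sard to rule out $r_-<r_+$: the set of $r$ with $\varphi^{-1}(r)\cap G^{-1}(0)=\emptyset$ is dense in $\R$, so pick such a regular $r\in(r_-,r_+)$; connectedness of $\Omega$ yields $\bar x\in\Omega$ at level $r$, necessarily non-equilibrium, and invariance plus descent supply a trajectory $z$ in $\Omega$ from $\bar x$ with $\sup_{[0,T]}\varphi(z(t))\leq r$ and $\varphi(z(T))<r$ for some $T>0$. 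The image of $\varphi\circ z$ on $[0,T]$ is then a closed interval $[a,r]\subseteq L$ with $a<r$, and one obtains new limit points in $\Omega$ at every level in $[a,r]$. Iterating this ``descent cascade'' at densely many regular values and using the internally chain-transitive structure of $\Omega$ supplied by Theorem~\ref{thm:duchi_arzela}, one derives the contradiction that $L$ cannot contain a nondegenerate interval while $\varphi(G^{-1}(0)\cap\Omega)$ has empty interior. Hence $L=\{\varphi^*\}$, so $\varphi(x_k)\to\varphi^*$.

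Once $\varphi|_\Omega\equiv\varphi^*$ is established, the remaining claim is immediate: any $\bar x\in\Omega\setminus G^{-1}(0)$ would, by invariance plus Assumption~\ref{ass:b_general}(\ref{ass:b2}), admit a trajectory $z$ in $\Omega$ with $\varphi(z(T))<\varphi^*$, contradicting $\varphi|_\Omega\equiv\varphi^*$. I expect the main obstacle to be the descent-cascade argument inside the convergence step: converting the pointwise descent of Assumption~\ref{ass:b_general}(\ref{ass:b2}) into a global obstruction to $L$ being nondegenerate requires careful use of the density of weak-Sard regular values, together with the invariance and near-chain-transitivity of $\Omega$ inherited from the functional approximation of Theorem~\ref{thm:duchi_arzela}.
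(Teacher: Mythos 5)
Your overall architecture (limit set $\Omega$, invariance via Theorem~\ref{thm:duchi_arzela}, Lyapunov descent, weak Sard) is the right family of ideas, and your preliminary steps ($\alpha_k\to 0$, $\|x_{k+1}-x_k\|\to 0$, compactness and connectedness of $\Omega$, forward invariance, and the final deduction once $\varphi|_\Omega$ is known to be constant) are sound and mirror ingredients of the paper's proof. The problem is the central step: the ``descent cascade'' is not a proof, and the properties you actually establish for $\Omega$ --- compact, connected, forward invariant --- are provably insufficient to force $\varphi|_\Omega$ to be constant. Concretely, take $G(x)=-\nabla f(x)$ with $f(x)=x^2$ on $\R$ and $\varphi=f$. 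The set $\Omega_0=[0,1]$ is compact, connected, and forward invariant under $\dot z=-2z$; the descent property of Assumption~\ref{ass:b_general} holds; $\varphi(G^{-1}(0))=\{0\}$ has empty interior, so weak Sard holds; and yet $\varphi(\Omega_0)=[0,1]$ is a nondegenerate interval. So no argument using only those hypotheses can reach your contradiction; you must exploit how $\Omega$ arises from the \emph{discrete} dynamics. The missing ingredient is exactly what you gesture at with ``the internally chain-transitive structure of $\Omega$ supplied by Theorem~\ref{thm:duchi_arzela}'': but that theorem only says shifted interpolants subconverge to trajectories (the asymptotic-pseudotrajectory property). Deducing that the limit set is internally chain transitive, and then that a Lyapunov function whose equilibrium values have empty interior is constant on any such set, are two further nontrivial theorems (in the style of Bena\"\i m--Hofbauer--Sorin) that you neither prove nor can cite as established in this paper.

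The paper closes this gap differently, avoiding chain transitivity altogether. Normalizing $\liminf_k\varphi(x_k)=0$, it picks an arbitrarily small regular value $\epsilon\notin\varphi(G^{-1}(0))$ and runs a direct non-escape argument on the nested sublevel sets $\mathcal{L}_\epsilon\subset\mathcal{L}_{2\epsilon}$: since $\varphi(x_k)<\epsilon$ infinitely often, each escape from $\mathcal{L}_{2\epsilon}$ must begin with a crossing $x_{i_j}\in\mathcal{L}_\epsilon$, $x_{i_j+1}\notin\mathcal{L}_\epsilon$; if there were infinitely many escapes, Theorem~\ref{thm:duchi_arzela} applied at the times $t_{i_j}$ would produce a limiting trajectory $z$ with $\varphi(z(0))=\epsilon$, hence (by regularity of $\epsilon$) a non-equilibrium initial point, and the descent property forces $\varphi(z(T))\le\epsilon-2\delta$; uniform convergence on $[0,T]$ then shows the iterates return to $\mathcal{L}_\epsilon$ before exiting $\mathcal{L}_{2\epsilon}$, contradicting that an escape occurred. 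Hence $x_k\in\mathcal{L}_{2\epsilon}$ eventually, $\varphi(x_k)\to 0$, and the theorem follows. To salvage your route you would have to import and prove the chain-transitivity machinery; otherwise the quantitative use of the discrete recurrence (infinitely many visits to $\mathcal{L}_\epsilon$ at the liminf level) is precisely what your sketch is missing.
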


\subsection{Proof of Theorem~\ref{thm:main_cray_thm_general}}\label{sec:proof_main_thm}
In this section, we will prove Theorem~\ref{thm:main_cray_thm_general}. The argument we present is rooted in the ``non-escape argument'' for ODEs, using $\varphi$ as a Lyapunov function for the continuous dynamics. In particular, the proof we present is in the same spirit as that in \cite[Theorem 5.2.1]{kush_yin} and \cite[Section 3.4.1]{duchi_ruan}.

Henceforth, we will suppose that 
Assumptions~\ref{ass:duchi_assumption} and~\ref{ass:b_general} hold. We first collect two  elementary lemmas.

\begin{lem}\label{lem:step_size}
The equality $\lim_{k\to\infty}\|x_{k+1}-x_k\|=0$ holds.
\end{lem}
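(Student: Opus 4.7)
The plan is to write $x_{k+1}-x_k = \alpha_k y_k + \alpha_k \xi_k$ directly from the recursion \eqref{eqn:Euler_rec} and show that both terms on the right-hand side tend to zero. This reduces the lemma to two elementary observations extracted from Assumption~\ref{ass:duchi_assumption}, and no further machinery (differential inclusions, Lyapunov functions, etc.) is needed.

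For the first term, item~\ref{item:steps_duchi} of Assumption~\ref{ass:duchi_assumption} gives $\sum_k \alpha_k^2 < \infty$, so in particular $\alpha_k \to 0$. Combined with item~\ref{item:gradients_bounded}, which asserts $M := \sup_{k\geq 1}\|y_k\| < \infty$, we immediately conclude $\|\alpha_k y_k\| \leq M \alpha_k \to 0$.

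For the second term, I would invoke item~\ref{item:noise}: the partial sums $s_n := \sum_{k=1}^n \alpha_k \xi_k$ converge to some vector $v \in \R^d$. Since any convergent sequence is Cauchy, the increments satisfy $\alpha_{n+1}\xi_{n+1} = s_{n+1} - s_n \to 0$. Combining the two bounds via the triangle inequality,
\begin{equation*}
\|x_{k+1}-x_k\| \;=\; \|\alpha_k y_k + \alpha_k \xi_k\| \;\leq\; \alpha_k \|y_k\| + \|\alpha_k \xi_k\| \;\longrightarrow\; 0,
\end{equation*}
which is the desired claim.

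There is no real obstacle here; the lemma is a straightforward bookkeeping consequence of the standing assumptions, and its role is purely auxiliary, presumably to ensure that the linear interpolation $x(\cdot)$ from \eqref{eqn:desr_path} has vanishing oscillation on successive grid intervals so that subsequent Arzelà–Ascoli-type arguments in the proof of Theorem~\ref{thm:duchi_arzela} and Theorem~\ref{thm:main_cray_thm_general} go through. The only place one might stumble is forgetting that $\xi_k$ itself need not be bounded; it is essential to use the convergence of the \emph{weighted} series $\sum \alpha_k \xi_k$ rather than trying to bound $\|\xi_k\|$ directly.
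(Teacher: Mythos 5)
Your proof is correct and follows essentially the same route as the paper: both bound $\|x_{k+1}-x_k\|\leq \alpha_k\|y_k\|+\alpha_k\|\xi_k\|$, kill the first term via $\alpha_k\to 0$ and $\sup_k\|y_k\|<\infty$, and kill the second by observing that convergence of the partial sums $\sum_{k}\alpha_k\xi_k$ forces the increments $\alpha_k\xi_k$ to vanish. Your closing remark about not attempting to bound $\|\xi_k\|$ directly is exactly the right point of care, and it matches the paper's use of the weighted noise assumption.
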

\begin{proof}
From the recurrence \eqref{eqn:Euler_rec}, we have 
 $\|x_{k+1}-x_{k}\|
 \leq \alpha_{k}\|y_{k}\|+\alpha_{k}\|\xi_k\|.$ Assumption~\ref{ass:duchi_assumption} guarantees $\alpha_k\to 0$ and $\{y_k\}$ are bounded, and therefore $\alpha_{k}\|y_{k}\|\to 0$. Moreover, since the sequence 
 $\sum_{k = 1}^n \alpha_k \xi_{k}$ is convergent, we deduce  $\alpha_{k}\|\xi_k\|\to 0$. The result follows.
\end{proof}

\begin{lem}\label{lem_liminf}
Equalities hold: 
\begin{equation}\label{eqn:lfls}
\lf_{t\to\infty} \varphi(x(t))=\lf_{k\to\infty} \varphi(x_k)\qquad \textrm{ and }\qquad \lims_{t\to\infty} \varphi(x(t))=\lims_{k\to\infty}\varphi(x_k).
\end{equation}
\end{lem}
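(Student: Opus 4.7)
My plan is to exploit the fact that the interpolated curve $x(t)$ differs from the iterate $x_k$ by at most $\|x_{k+1}-x_k\|$ on each interval $[t_k,t_{k+1})$, and this gap vanishes by Lemma~\ref{lem:step_size}. Combined with uniform continuity of $\varphi$ on a compact set containing all iterates, this will force $\varphi\circ x$ to track $\varphi(x_k)$ asymptotically, yielding both equalities in \eqref{eqn:lfls}.

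First, I would use Assumption~\ref{ass:duchi_assumption}\ref{item:gradients_bounded} to get $M:=\sup_k\|x_k\|<\infty$. From the definition \eqref{eqn:desr_path}, each $x(t)$ with $t\in[t_k,t_{k+1})$ is a convex combination of $x_k$ and $x_{k+1}$, hence $\|x(t)\|\leq M$, so the entire image of $x$ lies in the compact ball $M\mathcal{B}$. Since $\varphi$ is continuous, it is uniformly continuous on $M\mathcal{B}$; let $\omega$ denote its modulus of continuity there.

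Next, for any $t\in[t_k,t_{k+1})$, the interpolation formula gives
\begin{equation*}
\|x(t)-x_k\|=\tfrac{t-t_k}{t_{k+1}-t_k}\|x_{k+1}-x_k\|\leq\|x_{k+1}-x_k\|.
\end{equation*}
Therefore $|\varphi(x(t))-\varphi(x_k)|\leq\omega(\|x_{k+1}-x_k\|)$, and Lemma~\ref{lem:step_size} ensures the right-hand side tends to $0$ as $k\to\infty$. Consequently
\begin{equation*}
\sup_{t\in[t_k,t_{k+1})}|\varphi(x(t))-\varphi(x_k)|\xrightarrow{k\to\infty} 0.
\end{equation*}

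Finally, because $t_k\to\infty$ (by square-summability combined with non-summability of $\{\alpha_k\}$, actually just non-summability suffices), the intervals $[t_k,t_{k+1})$ exhaust $\R_+$, so each $t$ large enough falls into some $[t_k,t_{k+1})$ with $k$ also large. Taking $\liminf$ and $\limsup$ on both sides of the inequality above, and using that the uniform error vanishes, produces both equalities of \eqref{eqn:lfls}. I anticipate no real obstacle here; the only subtlety worth mentioning explicitly is that $\{x(t)\}$ remains bounded so that uniform continuity of $\varphi$ applies, which is immediate from the convex combination representation.
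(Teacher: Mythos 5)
Your proof is correct and rests on the same core idea as the paper's: the interpolation gap $\|x(t)-x_k\|\leq\|x_{k+1}-x_k\|$ vanishes by Lemma~\ref{lem:step_size}, so continuity of $\varphi$ transfers the $\liminf$/$\limsup$ from the curve to the iterates. The only cosmetic difference is that you invoke uniform continuity of $\varphi$ on a compact ball to get a uniform estimate, whereas the paper extracts a convergent subsequence $x(\tau_i)\to x^*$ and uses pointwise continuity at $x^*$; both are valid and essentially equivalent.
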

\begin{proof}
	Clearly, the inequalities $\leq$ and $\geq$ hold in \eqref{eqn:lfls}, respectively. We will argue that the reverse inequalities are valid.
	To this end, let $\tau_i\to\infty$ be an arbitrary sequence
	with $x(\tau_i)$ converging to some point $x^*$ as $i\to\infty$.

For each index $i$, define the breakpoint $k_i=\max\{k\in\mathbb{N}: t_k\leq \tau_i\}$. Then by the triangle inequality, we have
	\begin{align*}
	\|x_{k_i}-x^*\|&\leq \|x_{k_i}-x(\tau_i)\|+\|x(\tau_i)-x^*\|
	\leq \|x_{k_i}-x_{k_i+1}\|+\|x(\tau_i)-x^*\|
	\end{align*}
	Lemma~\ref{lem:step_size} implies that the right-hand-side tends to zero, and hence $x_{k_i}\to x^*$. Continuity of $\varphi$ then directly yields the guarantee $\varphi(x_{k_i}) \rightarrow \varphi(x^*)$. 
	
 In particular,	we may take $\tau_i\to \infty$ to be a sequence realizing $\lf_{t\to\infty} \varphi(x(t))$. Since the curve $x(\cdot)$ is bounded, we may suppose that up to taking a subsequence,  $x(\tau_i)$ converges to some point $x^*$. We therefore deduce $$\lf_{k\to\infty} \varphi(x_k)\leq \lim_{i\to\infty}\varphi(x_{k_i}) = \varphi(x^*)=\lf_{t\to\infty} \varphi(x(t)),$$ thereby establishing the first equality in \eqref{eqn:lfls}. The second equality follows analogously.
\end{proof}

\begin{figure}[h]
	\centering
	\includegraphics[scale=1]{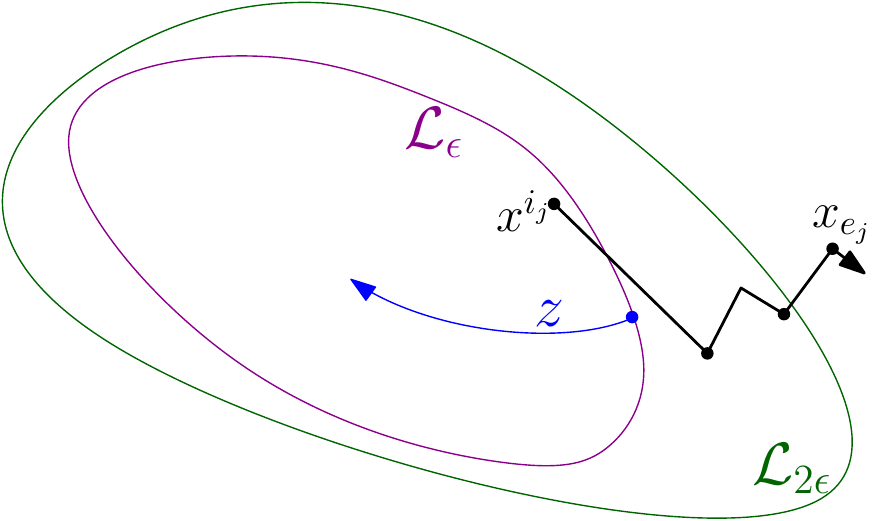}
	\caption{Illustration of the non-escape argument}
	\label{fig:attractor}
\end{figure}

The proof of Theorem~\ref{sec:proof_main_thm} will follow quickly from the following 
proposition. 
\begin{proposition}\label{prop:limit_of_functions}
The values $\varphi(x(t))$ have a limit as $t\to\infty$.
\end{proposition}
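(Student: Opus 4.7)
My plan is a non-escape argument using $\varphi$ as a Lyapunov function. Set $L := \lf_{t\to\infty}\varphi(x(t))$ and $U := \ls_{t\to\infty}\varphi(x(t))$. Since $x(\cdot)$ is bounded (being a convex interpolation of the bounded sequence $\{x_k\}$) and $\varphi$ is continuous and bounded below, $L$ and $U$ are finite real numbers, and the claim is equivalent to $L=U$. I suppose for contradiction that $L<U$.

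The Weak Sard clause of Assumption~\ref{ass:b_general} supplies a value $r \in (L,U)$ with $\varphi^{-1}(r) \cap G^{-1}(0) = \emptyset$; I fix $\delta > 0$ small enough that $L < r - \delta$ and $r + \delta < U$. Since the continuous map $t \mapsto \varphi(x(t))$ oscillates below $r-\delta$ and above $r+\delta$ infinitely often, an iterated first-passage construction produces times $a_k < b_k$ with $a_k \to \infty$, $\varphi(x(a_k))=r-\delta$, and $\varphi(x(b_k))=r+\delta$. For each $k$, define
$$\tau_k := \sup\{\,t \in [a_k,b_k] : \varphi(x(t)) = r\,\},$$
which is well defined by the intermediate value theorem. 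Then $\varphi(x(\tau_k))=r$; moreover, since $\varphi(x(b_k)) > r$ and $\varphi \circ x$ avoids the value $r$ on $(\tau_k, b_k]$, continuity forces $\varphi(x(t)) > r$ for every $t \in (\tau_k, b_k]$.

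Boundedness of the iterates lets me pass to a subsequence along which $x(\tau_k) \to \bar x$; then $\varphi(\bar x) = r$, hence $0 \notin G(\bar x)$. Applying Theorem~\ref{thm:duchi_arzela} and extracting a further subsequence, the shifted curves $x^{\tau_k}(\cdot)$ converge in $\mathcal{C}(\R_+, \R^d)$ to a trajectory $z(\cdot)$ of the differential inclusion with $z(0) = \bar x$. The Descent clause of Assumption~\ref{ass:b_general} then furnishes $T > 0$ with
$$\varphi(z(T)) < \sup_{t \in [0,T]} \varphi(z(t)) \leq \varphi(z(0)) = r.$$
Uniform convergence of $x^{\tau_k}$ to $z$ on $[0,T]$ and continuity of $\varphi$ yield, for all sufficiently large $k$, both $\varphi(x(\tau_k + T)) < r$ and $\sup_{t \in [0,T]} \varphi(x(\tau_k + t)) < r + \delta$. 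The first inequality, combined with the strict bound $\varphi \circ x > r$ on $(\tau_k, b_k)$ and $T > 0$, forces $b_k \leq \tau_k + T$; the second then gives $\varphi(x(b_k)) < r + \delta$, contradicting $\varphi(x(b_k)) = r + \delta$. This rules out $L<U$ and completes the proof.

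I expect the main obstacle to be arranging the discrete crossings so that the continuous descent bites against them. The essential trick is defining $\tau_k$ as the \emph{last} hitting time of $r$ before $b_k$ rather than the first; this is what guarantees the one-sided strict inequality $\varphi \circ x > r$ on $(\tau_k, b_k)$, and it is precisely this inequality that allows the strict descent of $\varphi$ along the limit trajectory $z$ to contradict the persistence of the upward excursion of the discretized curve toward level $r+\delta$.
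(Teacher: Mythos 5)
Your proof is correct. The engine is the same as the paper's---a non-escape argument that extracts a limiting trajectory via Theorem~\ref{thm:duchi_arzela} at recurrence times sitting on a regular level set and then contradicts recurrence with the descent property---but your implementation is genuinely different and, in places, cleaner. The paper normalizes $\lf_t\varphi(x(t))=0$, works with the \emph{discrete} iterates, and must first establish Claim~\ref{claim:rand_claim_on2} (a one-step sublevel containment, which needs a compactness/separation argument plus Lemma~\ref{lem:step_size}) before setting up an entry/exit bookkeeping for the annulus $\mathcal{L}_{2\epsilon}\setminus\mathcal{L}_{\epsilon}$ with indices $i_j$, $e_j$, $\ell_j$; it then sends $\epsilon\to0$. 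You instead suppose $\lf<\ls$ directly, stay entirely with the continuous interpolated curve $x(\cdot)$, and replace the annulus bookkeeping with a last-hitting-time $\tau_k$ of a regular level $r$ before each upcrossing to $r+\delta$; the intermediate value theorem then does for free what the paper's Claim~\ref{claim:rand_claim_on2} does by hand (the continuous curve cannot jump over a level), so you never need Lemma~\ref{lem:step_size} or the $\epsilon\to0$ limit. The one-sided inequality $\varphi\circ x>r$ on $(\tau_k,b_k]$ obtained from taking the \emph{last} hitting time is exactly what converts $\varphi(x(\tau_k+T))<r$ into $b_k<\tau_k+T$ and hence into the contradiction $\varphi(x(b_k))<r+\delta$. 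The only cosmetic imprecision is writing $b_k\le\tau_k+T$ where the argument actually yields the strict inequality, but either version suffices since $b_k-\tau_k\in(0,T]$ is all that is used.
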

\begin{proof}
Without loss of generality, suppose $0=\lf_{t\to\infty} \varphi(x(t))$. 
For each $r\in \R$, define the sublevel set $$\mathcal{L}_{r}:=\{x\in\R^d: \varphi(x)\leq r\}.$$

Choose any $\epsilon >0$ satisfying $ \epsilon \notin  \varphi( G^{-1}(0))$. Note that by Assumption~\ref{ass:b_general}, we can let $\epsilon>0$ be as small as we wish.
By the first equality in~\eqref{eqn:lfls}, there are infinitely many indices $k$ such that $\varphi(x_k) < \epsilon$. The following elementary observation shows that for all large $k$, if $x_k$ lies in $\mathcal{L}_{\epsilon}$ then the next iterate $x_{k+1}$ lies in $\mathcal{L}_{2\epsilon}$.

\begin{claim}\label{claim:rand_claim_on2}
For all sufficiently large indices $k\in \mathbb{N}$, the implication holds:
$$x_k\in \mathcal{L}_{\epsilon}\quad \Longrightarrow\quad x_{k+1}\in \mathcal{L}_{2\epsilon}.$$
\end{claim}	
\begin{proof}
Since the sequence $\{x_k\}_{k\geq 1}$ is bounded, it is contained in some compact set $C\subset\R^n$. From continuity, we have
$$\cl(\R^d\setminus \mathcal{L}_{2\epsilon})= \cl(\varphi^{-1}(2\epsilon,\infty))\subseteq \varphi^{-1}[2\epsilon,\infty).$$ It follows that the two closed sets, $C\cap \mathcal{L}_{\epsilon}$ and $\cl(\R^d\setminus \mathcal{L}_{2\epsilon})$, do not intersect. Since $C\cap \mathcal{L}_{\epsilon}$ is compact, we deduce that it is well separated from $\R^d\setminus \mathcal{L}_{2\epsilon}$; that is, there exists $\alpha>0$ satisfying:
$$\min\{\|w-v\|: w\in C\cap \mathcal{L}_{\epsilon},~v\notin \mathcal{L}_{2\epsilon}\}\geq \alpha>0.$$
In particular $\dist(x_k;\R^d\setminus \mathcal{L}_{2\epsilon})\geq \alpha>0$, whenever $x_k$ lies in $\mathcal{L}_{\epsilon}$. Taking into account Lemma~\ref{lem:step_size}, we deduce $\|x_{k+1}-x_k\|<\alpha$ for all large $k$, and therefore   $x_k\in \mathcal{L}_{\epsilon}$ implies $x_{k+1}\in \mathcal{L}_{2\epsilon}$, as claimed.
\end{proof}

Let us define now the following sequence of iterates. Let $i_1\in \mathbb{N}$ be the first index satisfying
\begin{enumerate}
\item $x_{i_{1}}\in \mathcal{L}_{\epsilon}$, 
\item $x_{i_{1} + 1}\in \mathcal{L}_{2\epsilon}\setminus \mathcal{L}_{\epsilon}$, and 
\item defining the exit time $e_1:=\min\{e\geq i_1: x_e\notin \mathcal{L}_{2\epsilon}\setminus \mathcal{L}_{\epsilon}\}$, the iterate $x_{e_1}$ lies in $\R^d\setminus \mathcal{L}_{2\epsilon}$.
\end{enumerate} 
Then let $i_2>i_1$ be the next smallest index satisfying the same property, and so on. See Figure~\ref{fig:attractor} for an illustration. The following claim will be key.

\begin{claim}\label{claim:finite_ternminate}
This process must terminate, that is $\{x_k\}$ exits $\mathcal{L}_{2\epsilon}$ only finitely many times.
\end{claim}

Before proving the claim, let us see how it immediately yields the validity of the theorem. To this end, observe that 
 Claims~\ref{claim:rand_claim_on2} and \ref{claim:finite_ternminate} immediately imply $x_k\in \mathcal{L}_{2\epsilon}$ for all large $k$. Since $\epsilon>0$ can be made arbitrarily small, we deduce $\lim_{k\to\infty} \varphi(x_k)=0$. Equation~\eqref{eqn:lfls} then directly implies $\lim_{t\to\infty} \varphi(x(t))=0$, as claimed.

\begin{proof}[Proof of Claim~\ref{claim:finite_ternminate}]
To verify the claim, suppose that the process does not terminate. Thus we obtain an increasing sequence of indices $i_j\in \mathbb{N}$ with $i_j\to\infty$ as $j\to \infty$. Set $\tau_j=t_{i_j}$
and consider the curves $x^{\tau_j}(\cdot)$ in $\mathcal{C}(\R_+,\R^d)$. Then up to a subsequence, Theorem~\ref{thm:duchi_arzela} shows that the curves $x^{\tau_j}(\cdot)$ converge in $\mathcal{C}(\R_+,\R^d)$ to some arc $z(\cdot)$ satisfying $$\dot{z}(t)\in  G(z(t))\qquad \textrm{for a.e. } t\geq0.$$
By construction, we have $\varphi(x_{i_j})\leq \epsilon$ and $\varphi(x_{i_j+1})>\epsilon$. We therefore deduce
\begin{equation}\label{eqn:rand_ineq_cont}
\epsilon\geq \varphi(x_{i_{j}})\geq \varphi(x_{i_{j}+1})+(\varphi(x_{i_{j}})-\varphi(x_{i_{j}+1}))\geq \epsilon+[\varphi(x_{i_{j}})-\varphi(z(0))]-[\varphi(x_{i_{j}+1})-\varphi(z(0))].
\end{equation}
Recall $x_{i_{j}}\to z(0)$ as $j\to\infty$.
Lemma~\ref{lem:step_size} in turn implies $\|x_{i_{j}}-x_{i_{j}+1}\|\to 0$  and therefore $x_{i_{j}+1}\to z(0)$ as well. Continuity of $\varphi$ then guarantees that the right-hand-side of \eqref{eqn:rand_ineq_cont} tends to $\epsilon$, and hence $\varphi(z(0))=\lim_{j\to\infty} \varphi(x_{i_j})= \epsilon$. In particular, $z(0)$ is not an equilibrium point of $G$. Hence, Assumption~\ref{ass:b_general} yields a real $T>0$ such that 
$$\varphi(z(T)) < \sup_{t\in [0,T]} \varphi(z(t))  \leq \varphi(z(0)) = \epsilon.$$ 
In particular, there exists a real $\delta > 0$ satisfying 
$
\varphi(z(T)) \leq \epsilon - 2\delta.
$

Appealing to uniform convergence on $[0,T]$, we conclude
$$\sup_{t\in [0,T]}|\varphi(z(t))-\varphi(x^{\tau_j}(t))|<  \epsilon,$$
for all large $j\in\mathbb{N}$,
and therefore $$\sup_{t\in [0, T]} \varphi(x^{\tau_j}(t))\leq  \sup_{t\in [0, T]} \varphi(z(t))+\sup_{t\in [0, T]} |\varphi(z(t))-\varphi(x^{\tau_j}(t))|\leq 2\epsilon.$$ 
Hence, for all large $j$,
%\begin{enumerate} 
 all the curves $x^{\tau_j}$ map $[0,T]$ into $\mathcal{L}_{2\epsilon}$. We conclude that the exit time satisfies $$t_{e_j} > \tau_j + T \qquad \text{for all large $j$}.$$ We will show that the bound $\varphi(z(T)) \leq \epsilon - 2\delta$ yields the opposite inequality $t_{e_j} \leq \tau_j + T$, which will lead to a contradiction.

To that end, let 
$$
\ell_j = \max\{ \ell \in \NN \mid \tau_j \leq t_{\ell} \leq \tau_j + T\},
$$
be the last discrete index before $T$. Because $\alpha_k \rightarrow 0$ as $k \rightarrow \infty$, we have that $\ell_j \geq i_j +1$ for all large $j$.
We will now show that for all large $j$, we have $$\varphi(x_{{\ell_j}}) < \epsilon - \delta,$$ which implies $t_{e_j} < t_{\ell_j} \leq \tau_j +T$.
Indeed, observe 
$$\|x_{\ell_j}-x^{\tau_j}(T)\|= \|x^{\tau_j}(t_{\ell_j}-\tau_j)-x^{\tau_j}(T)\|\leq\|x_{\ell_j}-x_{\ell_j+1}\|\to 0.$$ 
Hence $x_{\ell_j}\to z(T)$ as $j\to\infty$. Continuity of $\varphi$ then guarantees  $\lim_{j\to\infty}\varphi(x_{\ell_j})=\varphi(z(T))$.
 Consequently, the inequality $\varphi(x_{\ell_j})<\epsilon - \delta$ holds for all large $j$, which is the desired contradiction.
\end{proof}
The proof of the lemma is now complete.
\end{proof}

We can now prove the main convergence theorem.

\begin{proof}[Proof of Theorem~\ref{thm:main_cray_thm_general}]
Let $x^*$ be a limit point of $\{x_k\}$ and suppose for the sake of contradiction that $0 \notin G(x^*)$. Let $i_j$ be the indices satisfying $x_{i_j}\to x^*$ as $j\to\infty$. Let $ z(\cdot)$ be the subsequential limit of the curves $x^{t_{i_j}}(\cdot)$ in $\mathcal{C}(\R_+,\R^d)$ guaranteed to exist by Theorem~\ref{thm:duchi_arzela}. Assumption~\ref{ass:b_general}  guarantees that there exists a real $T>0$ satisfying
$$\varphi(z(T))<\sup_{t\in [0,T]} \varphi(z(t))\leq \varphi(x^*).$$ 
 On the other hand, we successively deduce $$\varphi(z(T))=\lim_{j\to\infty} \varphi(x^{t_{i_j}}(T)) = \lim_{t \rightarrow \infty} \varphi(x(t)) =\varphi(x^*),$$ 
 where the last two equalities follow from Proposition~\ref{prop:limit_of_functions} and continuity of $\varphi$.
 We have thus arrived at a contradiction, and the theorem is proved.
\end{proof}

\section{Subgradient dynamical system }\label{section:subgradient}
Assumptions \ref{ass:duchi_assumption} and \ref{ass:b_general}, taken together, provide a powerful framework for proving subsequential convergence of algorithms to a zero of the set-valued map $G$. Note that the two assumptions are qualitatively different. Assumption~\ref{ass:duchi_assumption} is a property of both the algorithm \eqref{eqn:Euler_rec}  and the map $G$, while Assumption \ref{ass:b_general} is a property of $G$ alone.

For the rest of our discussion, we apply the differential inclusion approach outlined above to optimization problems.
Setting the notation, consider the optimization task
\begin{equation}\label{ean:main_opt_prob}
\min_{x\in\R^d}~f(x),
\end{equation}
where $f\colon\R^d\to\R$ is a locally Lipschitz continuous function. Seeking to apply the techniques of Section~\ref{differential_inclusions}, we simply set $G=-\partial f$ in the notation therein. Thus we will be interested in algorithms that, under reasonable conditions, track solutions of the differential inclusion
\begin{equation} \label{eqn:subgrad_follow}
\dot{z}(t)\in -\partial f(z(t))\qquad \textrm{for a.e. }t\geq 0,
\end{equation}
and subsequentially converge to critical points of $f$.
Discrete processes of the type \eqref{eqn:Euler_rec} for the optimization problem \eqref{ean:main_opt_prob} are often called stochastic approximation algorithms.  Here we study two such prototypical methods: the stochastic subgradient method in this section and the stochastic proximal subgradient in Section \ref{sec:prox_ext}. Each fits under the umbrella of Assumption \ref{ass:duchi_assumption}. 

Setting the stage, the {\em stochastic subgradient method} simply iterates the steps:
\begin{equation}\label{eqn:Euler_rec_clarke}
x_{k+1}=x_k-\alpha_k (y_k +  \xi_{k})\qquad \textrm{with}\qquad y_k\in \partial f(x_k),
\end{equation}
where $\{\alpha_k\}_{k\geq 1}$ is a step-size sequence and $\{\xi_k\}_{k\geq 1}$ is now a sequence of random variables (the ``noise'') on some probability space. 
Let us now isolate the following standard assumptions (e.g. \cite{Borkar,kush_yin}) for the method and see how they immediately imply Assumption \ref{ass:duchi_assumption}.

\begin{assumption}[Standing assumptions for the stochastic subgradient method]\label{ass:stand_ass}{\hfill }
\begin{enumerate}
\item\label{it:steps2_summable1} The sequence $\{\alpha_k\}$ is nonnegative, square summable, but not summable: $$\alpha_k\geq 0, \qquad\sum_{k=1}^{\infty}\alpha_k=\infty,\qquad \textrm{and}\qquad \sum_{k=1}^{\infty}\alpha_k^2<\infty.$$
\item \label{it:bound_iter1} Almost surely, the stochastic subgradient iterates are bounded: $\sup_{k\geq 1} \|x_k\|<\infty$.
\item\label{it:martingale_diff1} $\{\xi_k\}$ is a martingale difference sequence w.r.t the increasing $\sigma$-fields $$\mathcal{F}_k=\sigma(x_j,y_j,\xi_j: j\leq k).$$
That is, there exists a function $p: \RR^d \rightarrow [0, \infty)$, which is bounded on bounded sets, so that almost surely, for all $k\in \mathbb{N}$,  we have $$\EE[\xi_{k}|\mathcal{F}_k]=0 \qquad\textrm{ and }\qquad \EE[\|\xi_{k}\|^2|\mathcal{F}_k]\leq p(x_k).$$
\end{enumerate}
\end{assumption}

The following is true.

\begin{lem}
	Assumption~\ref{ass:stand_ass} guarantees that almost surely	Assumption~\ref{ass:duchi_assumption} holds.
\end{lem}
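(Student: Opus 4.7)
The plan is to verify the five items of Assumption~\ref{ass:duchi_assumption} one by one, working on the almost-sure event on which $\{x_k\}$ is bounded, with the identifications $\cX=\R^d$, $G=-\partial f$, and the iteration \eqref{eqn:Euler_rec_clarke} rewritten in the form \eqref{eqn:Euler_rec} by setting the ``drift'' to $-y_k$ and the ``noise'' to $-\xi_k$.

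Items \ref{item:iterates_in_Q} and \ref{item:steps_duchi} are immediate: $\cX=\R^d$ is all of space, and the step-size condition is Assumption~\ref{ass:stand_ass}(\ref{it:steps2_summable1}) verbatim. For item~\ref{item:gradients_bounded}, the almost-sure boundedness of $\{x_k\}$ is Assumption~\ref{ass:stand_ass}(\ref{it:bound_iter1}), and since $f$ is locally Lipschitz the Clarke subdifferential is locally bounded (being outer semicontinuous with nonempty compact values, as recalled in Section~\ref{sec:preliminaries}), so on any bounded set containing the iterates the values $y_k\in\partial f(x_k)$ are bounded as well.

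For item~\ref{item:noise}, I would invoke the $L^2$ martingale convergence theorem applied to the partial sums $M_n:=\sum_{k=1}^n \alpha_k \xi_k$, which form a martingale with respect to $\{\mathcal{F}_n\}$ thanks to Assumption~\ref{ass:stand_ass}(\ref{it:martingale_diff1}). Its predictable quadratic variation is
\[
\langle M\rangle_n \;=\; \sum_{k=1}^n \alpha_k^2\,\EE[\|\xi_k\|^2\mid\mathcal{F}_k]\;\leq\;\sum_{k=1}^n \alpha_k^2\, p(x_k).
\]
On the a.s.\ event where $\sup_k\|x_k\|\leq R<\infty$, the right-hand side is bounded by $(\sup_{\|x\|\leq R}p(x))\sum_k\alpha_k^2<\infty$, so $\langle M\rangle_\infty<\infty$ almost surely; the martingale convergence theorem then gives a.s.\ convergence of $M_n$.

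Finally, for the key item~\ref{item:average_OSC}, I need to show that along any subsequence $x_{k_j}\to\bar x$ one has $\dist(\tfrac{1}{n}\sum_{j=1}^n y_{k_j},\partial f(\bar x))\to 0$ (with the overall sign of $G=-\partial f$ cancelling on both sides of the distance). Fix $\varepsilon>0$. By outer-semicontinuity of $\partial f$ there exists $N$ with $y_{k_j}\in \partial f(\bar x)+\varepsilon\mathcal{B}$ for all $j\geq N$. Since $\partial f(\bar x)+\varepsilon\mathcal{B}$ is convex, the tail Cesàro average $\tfrac{1}{n-N+1}\sum_{j=N}^n y_{k_j}$ still lies in this set, and reweighting from $\tfrac{1}{n-N+1}$ to $\tfrac{1}{n}$ costs at most $|1-\tfrac{n-N+1}{n}|\sup_k\|y_k\|\to 0$; the head $\tfrac{1}{n}\sum_{j=1}^{N-1}y_{k_j}$ likewise vanishes by boundedness of $\{y_k\}$ from item~\ref{item:gradients_bounded}. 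Letting $\varepsilon\downarrow 0$ yields the claim. The one nontrivial point in the argument is item~\ref{item:noise}, where some care is needed to convert the $\mathcal{F}_k$-conditional bound on $\|\xi_k\|^2$ into a finite quadratic variation; this is handled cleanly by passing to the a.s.\ event of bounded iterates before applying martingale convergence.
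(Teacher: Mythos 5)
Your proof is correct and follows essentially the same route as the paper: items \ref{item:iterates_in_Q} and \ref{item:steps_duchi} are immediate, item \ref{item:gradients_bounded} follows from boundedness of the iterates plus local Lipschitz continuity of $f$, and item \ref{item:noise} is handled by the same $L^2$ martingale convergence argument via the predictable compensator bounded by $\sum_k\alpha_k^2 p(x_k)<\infty$. The only difference is that you spell out the Ces\`aro-average argument for item \ref{item:average_OSC} (outer semicontinuity plus convexity of $\partial f(\bar x)+\varepsilon\mathcal{B}$), which the paper explicitly leaves to the reader; your version of that step is sound.
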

\begin{proof}
Suppose Assumption~\ref{ass:stand_ass} holds.
Clearly \ref{ass:duchi_assumption}.\ref{item:iterates_in_Q} and \ref{ass:duchi_assumption}.\ref{item:steps_duchi} hold vacuously, while \ref{ass:duchi_assumption}.\ref{item:gradients_bounded} follows immediately from \ref{ass:stand_ass}.\ref{it:bound_iter1} and local Lipschitz continuity of $f$. Assumption~\ref{ass:duchi_assumption}.\ref{item:average_OSC} follows quickly from the fact the $\partial f$ outer-semicontinuous and compact-convex valued; we leave the details to the reader. 
Thus we must only verify \ref{ass:duchi_assumption}.\ref{item:noise}, which follows quickly from standard martingale arguments. Indeed,
	notice from Assumption~\ref{ass:stand_ass}, we have 
	\begin{align*}
	\EE\left[\xi_{k} \mid \cF_k\right] = 0 \quad \forall k && \text{and} && \sum_{i=0}^\infty \alpha_i^2 \EE\left[\|\xi_{i}\|^2\mid  \cF_i \right] \leq  \sum_{i=0}^\infty \alpha_i^2p(x_i) < \infty. 
	\end{align*}	
	Define the $L^2$ martingale $X_k = \sum_{i=1}^k \alpha_i \xi_{i}$. Thus the limit $\dotp{X}_{\infty}$ of the predictable compensator 
	$$
	\dotp{X}_k := \sum_{i=1}^k \alpha_i^2 \EE\left[\|\xi_{i}\|^2\mid  \cF_i \right],
	$$
	exists. Applying \cite[Theorem 5.3.33(a)]{dembo2016probability}, we deduce that almost surely $X_k$ converges to a finite limit.
\end{proof}

Thus applying Theorem~\ref{thm:duchi_arzela}, we deduce that under Assumption~\ref{ass:stand_ass}, almost surely, the stochastic subgradient path  tracks a trajectory of the differential inclusion \eqref{eqn:subgrad_follow}. As we saw in Section~\ref{differential_inclusions}, proving subsequential convergence to critical points requires existence of a Lyapunov-type function $\varphi$ for the continuous dynamics. Henceforth, let us assume that the Lyapunov function $\varphi$ is $f$ itself. Section~\ref{sec:verifying} is devoted entirely to justifying this assumption for two broad classes of functions that are virtually exhaustive in data scientific contexts.

\begin{assumption}[Lyapunov condition in unconstrained minimization]\label{ass:b}\hfill
	\begin{enumerate}
		\item\label{it:dense} {\bf (Weak Sard)} The set of noncritical values of $f$ is dense in $\R$.
		\item\label{ass:b22} {\bf (Descent)}	Whenever $z\colon\R_+\to\R^d$ is trajectory of the differential inclusion $\dot{z}\in -\partial f(z)$ and $z(0)$ is not a critical point of $f$, there exists a real $T>0$ satisfying 
		$$f(z(T))<\sup_{t\in [0,T]} f(z(t))\leq f(z(0)).$$ 
	\end{enumerate}	
\end{assumption}

Some comments are in order. Recall that the classical Sard's theorem guarantees that the set of critical values of any $C^d$-smooth function $f\colon\R^d\to\R$ has measure zero. Thus property~\ref{it:dense} in Assumption~\ref{ass:b} asserts a very weak version of a nonsmooth Sard theorem. This is a very mild property, there mostly for technical reasons. It can fail, however, even for a $C^1$ smooth function on $\R^2$; see the famous example of Whitney \cite{whitney1935}. Property~\ref{ass:b22} of Assumption~\ref{ass:b} is more meaningful. It essentially asserts that $f$ must locally strictly decrease along any subgradient trajectory emanating from a noncritical point. 

Thus applying Theorem~\ref{thm:main_cray_thm_general}, we have arrived at the following guarantee for the stochastic subgradient method.

\begin{thm}\label{thm:main_cray_thm_subgradient_case}
	Suppose that Assumptions \ref{ass:stand_ass} and \ref{ass:b} hold. Then almost surely, every limit point of stochastic subgradient iterates $\{x_{k}\}_{k\geq 1}$ is critical for $f$ and the function values $\{f(x_k)\}_{k\geq 1}$ converge.
\end{thm}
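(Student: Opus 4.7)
The plan is to invoke the general convergence result, Theorem~\ref{thm:main_cray_thm_general}, with the specialization $G=-\partial f$ and Lyapunov function $\varphi=f$. There are two checks to perform, corresponding to the two assumption blocks of the general theorem; everything substantive is already packaged inside Theorem~\ref{thm:main_cray_thm_general}, so this proof is largely a translation exercise.

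First, I would verify that Assumption~\ref{ass:duchi_assumption} holds almost surely, which is precisely the content of the lemma immediately preceding this theorem. Items \ref{item:iterates_in_Q} and \ref{item:steps_duchi} are vacuous/identical under $\cX=\R^d$, item \ref{item:gradients_bounded} follows from the almost-sure boundedness hypothesis \ref{ass:stand_ass}.\ref{it:bound_iter1} together with local Lipschitz continuity of $f$ (which bounds $\|y_k\|\leq\lip(f\vert_C)$ on any compact set $C$ containing the iterates), item \ref{item:noise} is the standard $L^2$-martingale convergence argument for $X_n=\sum_{i=1}^n\alpha_i\xi_i$ whose predictable compensator is finite by the uniform bound on $\EE[\|\xi_k\|^2\mid\cF_k]\leq p(x_k)$, and item \ref{item:average_OSC} follows from outer-semicontinuity and compact-convex-valuedness of $\partial f$.

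Second, I would translate Assumption~\ref{ass:b} into Assumption~\ref{ass:b_general}. With the identification $G=-\partial f$, the set $G^{-1}(0)$ is exactly the set of Clarke critical points, so $\varphi(G^{-1}(0))=f(G^{-1}(0))$ is the set of critical values of $f$. Consequently, Assumption~\ref{ass:b}.\ref{it:dense} (density of noncritical values) is precisely the weak Sard condition item \ref{item:iterates_in_Q} of Assumption~\ref{ass:b_general}, and the descent property \ref{ass:b22} is verbatim item \ref{ass:b2}. The conclusion of Theorem~\ref{thm:main_cray_thm_general} then reads: almost surely every limit point of $\{x_k\}$ lies in $(-\partial f)^{-1}(0)$, i.e.\ is Clarke critical, and $\{f(x_k)\}$ converges.

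The one mild obstacle is that Theorem~\ref{thm:main_cray_thm_general} formally requires $\varphi$ to be bounded from below on all of $\R^d$, whereas $f$ need not be globally bounded below. This is resolved sample-path-wise: on the almost-sure event that $\{x_k\}$ is bounded, the iterates together with all limit arcs of the shifted curves $x^{\tau_k}$ remain in some compact set $K$, on which $f$ is automatically bounded. One can either redefine $\varphi$ to be $f$ truncated to be bounded below outside a large ball containing $K$ (this does not affect the dynamics on $K$), or simply observe, tracing the proof in Section~\ref{sec:proof_main_thm}, that $\varphi$ is only ever evaluated along this bounded set. Either way, Theorem~\ref{thm:main_cray_thm_general} applies and delivers the claim.
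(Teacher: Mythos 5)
Your proposal is correct and follows exactly the paper's route: the unlabeled lemma preceding the theorem shows that Assumption~\ref{ass:stand_ass} implies Assumption~\ref{ass:duchi_assumption} almost surely, Assumption~\ref{ass:b} is precisely Assumption~\ref{ass:b_general} specialized to $G=-\partial f$ and $\varphi=f$, and Theorem~\ref{thm:main_cray_thm_general} then delivers the conclusion. Your observation that $f$ need not be globally bounded below (whereas Assumption~\ref{ass:b_general} formally requires this of $\varphi$) is a fair catch that the paper glosses over, and your fix---that on the almost-sure event where $\{x_k\}$ is bounded, $\varphi$ is only ever evaluated on a compact set, so one may truncate or simply trace the proof of Theorem~\ref{thm:main_cray_thm_general}---is the right one.
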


\section{Verifying the descent condition}\label{sec:verifying}
In light of Theorems~\ref{thm:main_cray_thm_general} and \ref{thm:main_cray_thm_subgradient_case}, it is important to isolate a class of functions that automatically satisfy Assumption~\ref{ass:b}.\ref{ass:b22}. In this section, we do exactly that, focusing on two problem classes: (1) subdifferentially regular functions and (2) those functions whose graphs are Whitney stratifiable. We will see that the latter problem class also  satisfies \ref{ass:b}.\ref{it:dense}.

The material in this section is not new. In particular, the results of this section have appeared in \cite[Section 5.1]{DIL}. These results, however, are somewhat hidden in the paper \cite{DIL} and are difficult to parse. Moreover, at the time of writing \cite[Section 5.1]{DIL}, there was no clear application of the techniques, in contrast to our current paper. Since we do not expect the readers to be experts in variational analysis and semialgebraic geometry, we provide here a self-contained treatment, highlighting only the most essential ingredients and streamlining some of the arguments.

Let us begin with the following definition, whose importance for verifying Property~\ref{ass:b22} in Assumption~\ref{ass:b} will become clear shortly. 
\begin{definition}[Chain rule]\label{def:chain_rule}
{\rm
Consider a locally Lipschitz function $f$ on $\R^d$. We will say that {\em $f$ admits a chain rule} if for any arc $z\colon\R_+\to\R^d$,  equality 
$$(f\circ z)'(t)=\langle \partial f(z(t)),\dot{z}(t) \rangle \qquad \textrm{holds for a.e. }t \geq 0.$$
}
\end{definition}

The importance of the chain rule becomes immediately clear with the following lemma.

\begin{lem}\label{lem:chaion_rule_gives_assump}
Consider a locally Lipschitz function $f\colon\R^d\to\R$ that admits a chain rule. Let $z\colon\R_+\to\R^d$ be any arc satisfying the differential inclusion $$\dot{z}(t)\in -\partial f(z(t))\qquad \textrm{for a.e. }t\geq 0.$$ Then equality $\|\dot{z}(t)\|=\dist(0,\partial f(z(t)))$ holds for a.e. $t\geq 0$, and therefore 
	\begin{equation}\label{eqn:lyap}
	f(z(0))-f(z(t))= \int_{0}^t \dist^2\left(0;\partial f(z(\tau))\right)\,d\tau,\qquad \forall t\geq 0.
	\end{equation}
	In particular, property \ref{ass:b22} of Assumption~\ref{ass:b} holds.
\end{lem}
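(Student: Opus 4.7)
The plan is to unpack the chain rule pointwise, recognize $-\dot z(t)$ as the minimum-norm element of $\partial f(z(t))$ by an elementary convexity argument, then integrate to obtain the energy identity, and finally combine it with lower semicontinuity of $v\mapsto \dist(0;\partial f(v))$ to derive the strict descent clause.

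First, I would fix an index set of full measure on which all the relevant objects behave: $\dot z(t)$ exists, $\dot z(t)\in -\partial f(z(t))$, $f\circ z$ is differentiable, and the chain rule from Definition~\ref{def:chain_rule} applies. At such a $t$, write $\dot z(t)=-u$ with $u\in\partial f(z(t))$. The chain rule asserts that $(f\circ z)'(t)=\langle v,\dot z(t)\rangle$ takes a single value for every $v\in\partial f(z(t))$; in particular, $\langle v,u\rangle=\langle u,u\rangle=\|u\|^2$ for all $v\in\partial f(z(t))$. Cauchy--Schwarz then gives $\|u\|^2=\langle v,u\rangle\leq\|v\|\cdot\|u\|$, so $\|u\|\leq\|v\|$ for every $v\in\partial f(z(t))$. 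Hence $u$ is the minimum-norm element, and $\|\dot z(t)\|=\|u\|=\dist(0;\partial f(z(t)))$ almost everywhere. This establishes the first claim.

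Next I would integrate the chain rule. On the full-measure set above, we have $(f\circ z)'(t)=\langle u,\dot z(t)\rangle=-\|u\|^2=-\dist^2(0;\partial f(z(t)))$. Since $f\circ z$ is absolutely continuous, integrating from $0$ to $t$ yields the identity \eqref{eqn:lyap}. As an immediate consequence, $t\mapsto f(z(t))$ is nonincreasing, so $\sup_{t\in[0,T]}f(z(t))\leq f(z(0))$ automatically.

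It remains to verify the strict inequality $f(z(T))<f(z(0))$ when $z(0)$ is noncritical. Set $c:=\dist(0;\partial f(z(0)))>0$. I would use the fact that $\partial f$ is outer semicontinuous, compact-valued, and locally bounded, which makes $v\mapsto\dist(0;\partial f(v))$ lower semicontinuous (a standard argument: if $v_i\to\bar v$ and $\liminf_i\dist(0;\partial f(v_i))=c'$, select $s_i\in\partial f(v_i)$ with $\|s_i\|\to c'$, extract a convergent subsequence $s_i\to s\in\partial f(\bar v)$ by local boundedness and outer semicontinuity, and conclude $\dist(0;\partial f(\bar v))\leq c'$). Combined with the continuity of $z$, this gives some $T>0$ with $\dist(0;\partial f(z(\tau)))\geq c/2$ for all $\tau\in[0,T]$, and then \eqref{eqn:lyap} yields $f(z(0))-f(z(T))\geq T(c/2)^2>0$. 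This verifies property~\ref{ass:b22} of Assumption~\ref{ass:b}.

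The main obstacle, if any, is the linear-algebraic step that identifies $-\dot z(t)$ as the minimum-norm element of $\partial f(z(t))$; once one correctly reads the chain rule as asserting that $\langle v,\dot z(t)\rangle$ is independent of $v\in\partial f(z(t))$, the rest is routine. The lower semicontinuity of $\dist(0;\partial f(\cdot))$, though standard, is the other ingredient worth stating carefully.
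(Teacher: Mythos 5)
Your proof is correct and follows essentially the same route as the paper: read the chain rule as saying $\langle v,\dot z(t)\rangle$ is independent of $v\in\partial f(z(t))$, identify $-\dot z(t)$ as the minimum-norm subgradient, integrate, and then use semicontinuity to keep the integrand bounded away from zero near a noncritical starting point. The only cosmetic differences are that you obtain the minimal-norm property via Cauchy--Schwarz where the paper uses an orthogonal-decomposition argument with $W=\spann(S-S)$, and your descent step is slightly more quantitative (an explicit bound $T(c/2)^2$) than the paper's appeal to outer semicontinuity; both are equivalent.
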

\begin{proof}
Fix a real $t\geq 0$ satisfying $(f\circ z)'(t)=\langle \partial f(z(t)),\dot{z}(t)\rangle$ . Observe then the equality
\begin{equation}\label{eqn:get_min_grad}
0=\langle \partial f(z(t))-\partial f(z(t)),\dot{z}(t)\rangle.
\end{equation}
To simplify the notation, set $S:=\partial f(z(t))$, $W:=\spann(S-S)$, and $y:=-\dot{z}(t)$. Appealing to \eqref{eqn:get_min_grad}, we conclude $y\in W^{\perp}$, and therefore trivially we have
$$y\in (y+W)\cap W^{\perp}.$$
Basic linear algebra implies $\|y\|=\dist(0;y+W)$. Noting $\partial f(z(t))\subset y+W$, we deduce $\|\dot{z}(t)\|\leq \dist(0;\partial f(z(t)))$ as claimed. Since the reverse inequality trivially holds, we obtain the claimed equality, $\|\dot{z}(t)\|= \dist(0;\partial f(z(t)))$.

Since $f$ admits a chain rule, we conclude for a.e. $\tau\geq 0$ the estimate
$$(f\circ z)'(\tau)=\langle \partial f(z(\tau)),\dot{z}(\tau) \rangle=-\|\dot z(\tau)\|^2 =- \dist^2\left(0;\partial f(z(\tau))\right).$$
Since $f$ is locally Lipschitz, the composition $f\circ z$ is absolutely continuous. Hence integrating over the interval $[0,t]$  yields \eqref{eqn:lyap}.

Suppose now that the point $z(0)$ is noncritical. Then by outer semi-continuity of $\partial f$, the exists $T>0$ such that $z(\tau)$ is noncritical for any $\tau\in [0,T]$. It follows immediately that the value $ \int_{0}^t \dist^2\left(0;\partial f(z(\tau))\right)\,d\tau$ is strictly increasing in $t\in [0,T]$, and therefore by \eqref{eqn:lyap} that $f\circ z$ is strictly decreasing. Hence item \ref{ass:b22} of Assumption~\ref{ass:b} holds, as claimed.
\end{proof}

Thus property \ref{ass:b22} of Assumption~\ref{ass:b} is sure to hold as long as $f$ admits a chain rule. 
In the following two sections, we identify two different function classes that indeed admit the chain rule.

\subsection{Subdifferentially regular functions}\label{subsec:subdiff_reg}
The first function class we consider consists of subdifferentially regular functions.
Such functions play a prominent role in variational analysis due to their close connection with convex functions; we refer the reader to the monograph \cite{RW98} for details. In essence, subdifferential regularity  forbids downward facing cusps in the graph of the function; e.g. $f(x)=-|x|$ is not subdifferentially regular. We now present the formal definition. 
\begin{definition}[Subdifferential regularity]
{\rm
A locally Lipschitz function $f\colon\R^d\to\R$ is {\em subdifferentially regular} at a point $x\in\R^d$ if every subgradient $v\in\partial f(x)$ yields an affine minorant of $f$ up to first-order:
$$f(y)\geq f(x)+\langle v,y-x\rangle+o(\|y-x\|) \qquad \textrm{as } y\to x.$$ }
\end{definition}

The following lemma shows that any locally Lipschitz function that is subdifferentially regular  indeed admits a chain rule.

\begin{lem}[Chain rule under subdifferential regularity]\label{lem:chain_rule_subdiff_reg}
Any locally Lipschitz function that is subdifferentially regular  admits a chain rule and therefore item \ref{ass:b22} of Assumption~\ref{ass:b} holds.
\end{lem}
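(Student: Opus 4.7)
The plan is to exploit the fact that, under subdifferential regularity, every subgradient yields a first-order affine minorant. Combined with the a.e.\ differentiability of the arc $z$, this minorant property pinches the one-sided difference quotients of $f\circ z$ from both sides, forcing $(f\circ z)'(t)$ to equal $\langle v,\dot z(t)\rangle$ for every $v\in \partial f(z(t))$.

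Fix an arc $z\colon \R_+\to\R^d$. Recall from Section~\ref{sec:preliminaries} that, since $f$ is locally Lipschitz, the composition $f\circ z$ is absolutely continuous; hence $(f\circ z)'(t)$ and $\dot z(t)$ both exist for a.e.\ $t\geq 0$. Fix any such $t$, and let $v\in \partial f(z(t))$ be arbitrary. The key computation is the following. By subdifferential regularity of $f$ at $z(t)$,
$$f(z(s))\ \geq\ f(z(t))+\langle v,\,z(s)-z(t)\rangle+o(\|z(s)-z(t)\|)\qquad \text{as }s\to t.$$
Since $z$ is differentiable at $t$, we have $z(s)-z(t)=(s-t)\dot z(t)+o(s-t)$ and in particular $\|z(s)-z(t)\|=O(|s-t|)$, so the remainder above is $o(s-t)$.

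Dividing by $s-t$ and letting $s\downarrow t$ and $s\uparrow t$ separately yields
$$(f\circ z)'(t)\ \geq\ \langle v,\dot z(t)\rangle\qquad \text{and}\qquad (f\circ z)'(t)\ \leq\ \langle v,\dot z(t)\rangle,$$
respectively. Therefore $(f\circ z)'(t)=\langle v,\dot z(t)\rangle$. Since $v\in \partial f(z(t))$ was arbitrary, the inner product $\langle \partial f(z(t)),\dot z(t)\rangle$ is a singleton equal to $(f\circ z)'(t)$, which is exactly the chain rule of Definition~\ref{def:chain_rule}. Invoking Lemma~\ref{lem:chaion_rule_gives_assump} then delivers item~\ref{ass:b22} of Assumption~\ref{ass:b}.

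I do not expect any significant obstacles: the whole argument hinges on turning the regularity inequality into matching upper and lower bounds on the difference quotient, and the only mildly delicate point is checking that the error term $o(\|z(s)-z(t)\|)$ converts into $o(s-t)$, which is immediate from differentiability of $z$ at $t$. Care is only needed to note that one needs $t$ chosen in the intersection of two full-measure sets (where $\dot z(t)$ and $(f\circ z)'(t)$ exist), which is still a.e.
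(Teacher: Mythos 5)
Your proposal is correct and follows essentially the same route as the paper's proof: use the regularity minorant along the arc, divide the difference quotient by $s-t$, and obtain the inequality $(f\circ z)'(t)\geq\langle v,\dot z(t)\rangle$ from the right limit and the reverse inequality from the left limit (where the sign of $s-t$ flips the inequality), then invoke Lemma~\ref{lem:chaion_rule_gives_assump}. Your extra care in converting $o(\|z(s)-z(t)\|)$ into $o(s-t)$ via differentiability of $z$ at $t$ is a detail the paper leaves implicit but is handled correctly.
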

\begin{proof}
	Let $f\colon\R^d\to\R$ be a locally Lipschitz and subdifferentially regular function.
Consider an arc $x\colon\R_+\to\R^d$. Since, $x$ and $f\circ x$ are absolutely continuous, both are differentiable almost everywhere. Then for any such $t\geq 0$ and any subgradient $v\in \partial f(x(t))$, we conclude
\begin{align*}
(f\circ x)'(t) =\lim_{r\searrow 0} \frac{f(x(t+r))-f(x(t))}{r} &\geq \lim_{r\searrow 0}\frac{\langle v,x(t+r)-x(t)\rangle+o(\|x(t+r)-x(t)\|)}{r}\\
&= \langle v,\dot x(t)\rangle.
\end{align*}
Instead, equating $(f\circ x)'(t)$ with the left limit of the difference quotient yields the reverse inequality $(f\circ x)'(t)\leq \langle v,\dot x(t)\rangle$. Thus $f$ admits a chain rule and  item \ref{ass:b22} of Assumption~\ref{ass:b} holds by  Lemma~\ref{lem:chaion_rule_gives_assump}.
\end{proof}

Thus we have arrived at the following corollary. For ease of reference, we state subsequential convergence guarantees both for the general process \eqref{eqn:Euler_rec}  and for the specific stochastic subgradient method \eqref{eqn:Euler_rec_clarke}.

\begin{cor}
	Let $f\colon\R^d\to\R$ be a locally Lipschitz function that is subdifferentially regular and such that its set of noncritical values is dense in $\R$. 
	\begin{itemize}
		\item {\bf (Stochastic approximation)} Consider the iterates $\{x_k\}_{k \geq 1}$ produced by \eqref{eqn:Euler_rec} and suppose  that Assumption~\ref{ass:duchi_assumption} holds with $G=-\partial f$. Then every limit point of  the iterates $\{x_{k}\}_{k\geq 1}$ is critical for $f$ and the function values $\{f(x_k)\}_{k\geq 1}$ converge.
		\item {\bf (Stochastic subgradient method)}
		Consider the iterates $\{x_k\}_{k \geq 1}$ produced by the stochastic subgradient method \eqref{eqn:Euler_rec_clarke} and suppose that
		 Assumption~\ref{ass:stand_ass} holds. Then almost surely, every limit point of  the iterates $\{x_{k}\}_{k\geq 1}$ is critical for $f$ and the function values $\{f(x_k)\}_{k\geq 1}$ converge. 
	\end{itemize}
\end{cor}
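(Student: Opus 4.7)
\medskip

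\noindent\textbf{Proof proposal.} The plan is to assemble the corollary directly from two previously established pieces: the abstract subsequential convergence theorems (Theorems~\ref{thm:main_cray_thm_general} and \ref{thm:main_cray_thm_subgradient_case}) and the chain-rule lemma for subdifferentially regular functions (Lemma~\ref{lem:chain_rule_subdiff_reg}). The only real work is bookkeeping: verifying that the Lyapunov-type assumption needed by those theorems is in force with the natural choice $\varphi=f$.

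For the first bullet, I would set $G=-\partial f$ and $\varphi=f$ in the framework of Section~\ref{differential_inclusions}. Assumption~\ref{ass:duchi_assumption} is given by hypothesis, so only Assumption~\ref{ass:b_general} needs to be checked. The function $\varphi=f$ is continuous, and boundedness from below on the trajectory of interest is automatic because the iterates (and hence their limit points) are bounded by Assumption~\ref{ass:duchi_assumption}.\ref{item:gradients_bounded} and $f$ is locally Lipschitz; if one wants $\varphi$ bounded below globally, one may replace $f$ by $\max\{f,-M\}$ for large $M$ outside a bounded set containing the iterates, which does not affect the Clarke subdifferential there. The weak Sard property in Assumption~\ref{ass:b_general}.\ref{it:dense0} translates to density of the noncritical values of $f$ in $\R$, which is exactly the hypothesis. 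Finally, the descent property \ref{ass:b_general}.\ref{ass:b2} is precisely item \ref{ass:b22} of Assumption~\ref{ass:b}, which Lemma~\ref{lem:chain_rule_subdiff_reg} establishes for any locally Lipschitz subdifferentially regular function. Theorem~\ref{thm:main_cray_thm_general} then yields that every limit point lies in $G^{-1}(0)=\{x:0\in\partial f(x)\}$ and that the values $\varphi(x_k)=f(x_k)$ converge.

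For the second bullet, I would invoke Theorem~\ref{thm:main_cray_thm_subgradient_case} directly. Its two hypotheses are Assumption~\ref{ass:stand_ass}, which is given, and Assumption~\ref{ass:b}. The density of noncritical values (\ref{ass:b}.\ref{it:dense}) is again the stated hypothesis, while the descent condition (\ref{ass:b}.\ref{ass:b22}) is supplied by Lemma~\ref{lem:chain_rule_subdiff_reg}. Thus Theorem~\ref{thm:main_cray_thm_subgradient_case} applies verbatim and gives the almost sure conclusion.

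The step I expect to need the most care is the verification of Assumption~\ref{ass:b_general} in the first bullet, specifically ensuring that the global lower-boundedness requirement on $\varphi$ is not an issue despite $f$ being only locally Lipschitz; this is handled by the observation above that boundedness of the iterates reduces the question to a compact region. Beyond that, there is no nontrivial content---the corollary is a clean application of the framework built in Sections~\ref{differential_inclusions} and \ref{section:subgradient} together with the chain rule of Lemma~\ref{lem:chain_rule_subdiff_reg}.
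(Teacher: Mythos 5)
Your proposal is correct and matches the paper's (implicit) argument exactly: the corollary is stated immediately after Lemma~\ref{lem:chain_rule_subdiff_reg} as a direct combination of that lemma with Theorems~\ref{thm:main_cray_thm_general} and~\ref{thm:main_cray_thm_subgradient_case}, with the density hypothesis supplying the weak Sard condition. Your extra remark on the lower-boundedness of $\varphi$ is a sensible piece of care the paper glosses over (boundedness of the iterates indeed reduces the matter to a compact region), but it does not change the route.
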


Though subdifferentially regular functions are widespread in applications, they preclude ``downwards cusps'', and therefore do not capture such simple examples as $f(x,y) = (|x|-|y|)^2$ and $f(x) = (1-\max\{x,0\})^2$. The following section concerns a different function class that does capture these two nonpathological examples.

\subsection{Stratifiable functions}
As we saw in the previous section, subdifferential regularity is a local property  that implies the desired item \ref{ass:b22} of Assumption~\ref{ass:b}. In this section, we instead focus on a broad class of functions satisfying a global geometric property, which eliminates pathological examples from consideration.

Before giving a formal definition, let us fix some notation. A set $M\subset\R^d$ is a {\em $C^p$ smooth manifold} if there is an integer $r\in \mathbb{N}$ such that around any point $ x\in M$, there is a  neighborhood $U$ and a $C^p$-smooth map $F\colon U\to\R^{d-r}$ with $\nabla F( x)$ of full rank and satisfying $M\cap U=\{y\in U: F(y)=0\}$. If this is the case, the {\em tangent} and {\em normal spaces} to $M$ at $x$ are defined to be $T_M(x):=\Null(\nabla F(x))$ and $N_M(x):=(T_M(x))^{\perp}$, respectively.

\begin{definition}[Whitney stratification]
{\rm
A  {\em Whitney $C^p$-stratification} $\mathcal{A}$ of a set $Q\subset\R^d$ is a partition of $Q$ into finitely many nonempty $C^p$ manifolds, called {\em strata}, satisfying the following compatibility conditions.
\begin{enumerate}
\item {\bf Frontier condition:} For any two strata $L$ and $M$, the implication 
$$L\cap \cl M\neq \emptyset\qquad \Longrightarrow \qquad L\subset \cl M\qquad \textrm{holds}.$$
\item {\bf Whitney condition (a):} For any sequence of points $z_k$ in a stratum $M$ converging to a point $\bar z$ in a stratum $L$, if the corresponding normal vectors $v_k\in N_{M}(z_k)$ converge to a vector $v$, then the inclusion $v\in N_L(\bar z)$ holds. 
 \end{enumerate}
 A function $f\colon\R^d\to\R$ is {\em Whitney $C^p$-stratifiable} if its graph admits a Whitney $C^p$-stratification.
}
\end{definition}

The definition of the Whitney stratification invokes 
two conditions, one topological and the other geometric.
The frontier condition simply says that if one stratum $L$ intersects the closure of another $M$, then $L$ must be fully contained in the closure $\cl M$. In particular, the frontier condition endows the strata with a partial order $L\preceq M\Leftrightarrow L\subset \cl M$. The Whitney condition (a) is geometric. In short, it asserts that limits of normals along a sequence $x_i$ in a stratum $M$ are themselves normal to the stratum containing the limit of $x_i$.

The following discussion of Whitney stratifications follows that in \cite{bolte2007clarke}.
Consider a Whitney $C^p$-stratification $\{M_i\}$ of the graph of a locally Lipschitz function $f\colon\R^d\to\R$. Let $\{\mathcal{M}_i\}$ be the manifolds obtained by projecting $\{M_i\}$ on $\R^d$. An easy argument using the constant rank theorem shows that the partition $\{\mathcal{M}_i\}$ of $\R^d$ is itself a Whitney $C^p$-stratification and the restriction of $f$ to each stratum $\{\mathcal{M}_i\}$ is $C^p$-smooth. Whitney condition (a) directly yields the following consequence~\cite[Proposition 4]{bolte2007clarke}. For any stratum $\mathcal{M}$ and any point $x\in \mathcal{M}$, we have
\begin{equation}\label{eqn:proj_norm_form}
(v,-1)\in N_M(x,f(x))\qquad \textrm{for all}\quad v\in \partial f(x),
\end{equation}
and 
\begin{equation}\label{eqn:subdiff_inclusion}
\partial f(x)\subset \nabla g(x)+N_{\mathcal{M}}(x),
\end{equation}
 where $g\colon\R^d\to\R$ is any $C^1$-smooth function agreeing with $f$ on a neighborhood of $x$ in $\mathcal{M}$.

The following theorem, which first appeared in \cite[Corollary 5]{bolte2007clarke}, shows that Whitney stratifiable functions automatically satisfy the weak Sard property of Assumption~\ref{ass:b}. We present a quick argument  here for completeness.
It is worthwhile to mention that such a Sard type result holds more generally for any stratifiable set-valued map; see the original work \cite{sard_ioffe} or the monograph \cite[Section 8.4]{ioffe_book}.

\begin{lem}[Stratified Sard]\label{lem:strat_sard}
	The set of critical values of any Whitney $C^d$-stratifiable locally Lipschitz function $f\colon\R^d\to\R$ has zero measure. In particular, item~\ref{it:dense} of Assumption~\ref{ass:b} holds.
\end{lem}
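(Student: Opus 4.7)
The plan is to reduce the claim to the classical Sard theorem applied stratum by stratum. Let $\{\mathcal{M}_i\}$ be the induced Whitney $C^d$-stratification of $\R^d$ obtained by projecting the graph-stratification, so that each restriction $f|_{\mathcal{M}_i}$ is $C^d$-smooth. The key algebraic input is the inclusion \eqref{eqn:subdiff_inclusion}: for any $x\in\mathcal{M}_i$,
\[
\partial f(x)\subset \nabla g(x)+N_{\mathcal{M}_i}(x),
\]
where $g$ is any $C^1$-smooth extension of $f|_{\mathcal{M}_i}$ near $x$. I will use this to show that every Clarke-critical point of $f$ on $\mathcal{M}_i$ is in fact a critical point of the smooth restriction $f|_{\mathcal{M}_i}$.

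Concretely, fix a stratum $\mathcal{M}=\mathcal{M}_i$ and suppose $x\in\mathcal{M}$ satisfies $0\in\partial f(x)$. By \eqref{eqn:subdiff_inclusion}, there exists $n\in N_{\mathcal{M}}(x)$ with $\nabla g(x)=-n$, i.e.\ $\nabla g(x)\in N_{\mathcal{M}}(x)$. Projecting onto the tangent space $T_{\mathcal{M}}(x)$ kills $\nabla g(x)$, which precisely says that the intrinsic gradient of $f|_{\mathcal{M}}$ vanishes at $x$. Thus every Clarke-critical point of $f$ lying in $\mathcal{M}$ is a critical point of $f|_{\mathcal{M}}$ in the classical sense.

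Now I would invoke the classical Sard theorem: since $\mathcal{M}$ is a $C^d$-smooth manifold of dimension $\dim \mathcal{M}\le d$, the map $f|_{\mathcal{M}}\colon \mathcal{M}\to\R$ is $C^d$-smooth, and Sard's theorem requires only that the smoothness order exceed $\dim\mathcal{M}-1$, which holds since $d\ge\dim\mathcal{M}$. Consequently the image $f(\{x\in\mathcal{M}: x\text{ is critical for }f|_{\mathcal{M}}\})$ has Lebesgue measure zero in $\R$. Taking a finite union over the finitely many strata $\mathcal{M}_i$, the set of critical values of $f$ is a finite union of measure-zero sets, hence itself has measure zero. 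In particular its complement is dense in $\R$, which is exactly item~\ref{it:dense} of Assumption~\ref{ass:b}.

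The main (modest) obstacle is the first step: translating the Clarke-criticality condition $0\in\partial f(x)$ into classical criticality of the smooth restriction $f|_{\mathcal{M}}$. This is purely algebraic given \eqref{eqn:subdiff_inclusion}, but it is the place where the Whitney condition (a) enters implicitly, since that is what guarantees \eqref{eqn:subdiff_inclusion} in the first place. After that, the proof is a one-line application of the classical Sard theorem plus finiteness of the stratification.
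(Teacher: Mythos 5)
Your proof is correct and follows essentially the same route as the paper's: both reduce to the classical Sard theorem applied stratum by stratum, using the consequence of Whitney condition~(a) to convert Clarke criticality of $f$ into classical criticality of a $C^d$-smooth map on a stratum, and then take a finite union. The only cosmetic difference is that you work downstairs with the projected strata $\mathcal{M}_i\subset\R^d$ and the restrictions $f|_{\mathcal{M}_i}$ via \eqref{eqn:subdiff_inclusion}, whereas the paper works upstairs with the graph strata $M_i\subset\R^{d+1}$ and the coordinate projections $\pi_i(x,r)=r$ via \eqref{eqn:proj_norm_form}; the two formulations are equivalent.
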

\begin{proof}
Let $\{M_i\}$ be the strata of a Whitney $C^d$ stratification of the graph of $f$. Let $\pi_i\colon M_i\to \R$ be the restriction of the orthogonal projection $(x,r)\mapsto r$ to the manifold $M_i$. We claim that each critical value of $f$ is a critical value (in the classical analytic sense) of $\pi_i$, for some index $i$. 
To see this, consider a critical point $x$ of $f$ and let $M_i$ be the stratum of $\gph f$ containing $(x,f(x))$. Since $x$ is critical for $f$, appealing to \eqref{eqn:proj_norm_form} yields the inclusion $(0,-1)\in N_{M_i}(x,f(x))$ and therefore the equality $\pi_i\left(T_{M_i}(x,f(x))\right)=\{0\}\subsetneq \R$. Hence $(x,f (x))$ is a critical point of $\pi_i$ and $f(x)$ its critical value, thereby establishing the claim. Since the set of critical values of each map $\pi_i$ has zero measure by the standard Sard's theorem, and there are finitely many strata, it follows that the set of critical values of $f$ also has zero measure.
\end{proof}

Next, we prove the chain rule for  any Whitney stratifiable function.

\begin{thm}\label{thm:chain_strat}
Any locally Lipschitz function $f\colon\R^d\to\R$ that is Whitney $C^1$-stratifiable admits a chain rule, and therefore item \ref{ass:b22} of Assumption~\ref{ass:b} holds.
\end{thm}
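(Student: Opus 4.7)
The plan is to reduce Definition~\ref{def:chain_rule} to the classical smooth chain rule on each stratum, invoking two ingredients already developed in the paper: the subdifferential inclusion \eqref{eqn:subdiff_inclusion} and the fact that the velocity of an arc at a generic time is tangent to its stratum. First I would project the Whitney $C^1$-stratification of $\gph f$ onto $\R^d$ to obtain a finite stratification $\{\mathcal{M}_i\}_{i=1}^N$ of $\R^d$ such that $f|_{\mathcal{M}_i}$ is $C^1$-smooth. Given an arc $x\colon\R_+\to\R^d$, the preimages $T_i:=x^{-1}(\mathcal{M}_i)$ form a finite Borel partition of $\R_+$. Let $D\subseteq \R_+$ be the full-measure set of times at which both $x$ and $f\circ x$ are differentiable; by the Lebesgue density theorem and finiteness of the partition, almost every $t\in D$ is a density point of exactly one $T_i$.

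Fix such a $t$, denote the corresponding stratum by $\mathcal{M}$, and establish the chain rule at $t$ in three short steps. (i) \emph{Tangency}: since $t$ is a density point of $T_i$, pick a sequence $t_n\to t$ in $T_i\setminus\{t\}$; then $(x(t_n)-x(t))/(t_n-t)\to \dot x(t)$ by differentiability of $x$, and because $x(t),x(t_n)\in\mathcal{M}$, the standard contingent-cone characterization of the tangent space of a $C^1$-manifold forces $\dot x(t)\in T_{\mathcal{M}}(x(t))$. (ii) \emph{Smooth reduction}: let $g$ be a $C^1$-smooth extension of $f|_{\mathcal{M}}$ on a neighborhood $U$ of $x(t)$. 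For $t_n\in T_i$ close enough to $t$, $x(t_n)\in\mathcal{M}\cap U$, hence $(f\circ x)(t_n)=(g\circ x)(t_n)$; passing to the limit in the difference quotients along this subsequence yields $(f\circ x)'(t)=\langle \nabla g(x(t)),\dot x(t)\rangle$. (iii) \emph{Subdifferential identification}: by \eqref{eqn:subdiff_inclusion}, every $v\in\partial f(x(t))$ decomposes as $v=\nabla g(x(t))+n$ with $n\in N_{\mathcal{M}}(x(t))$, so combining with (i) gives $\langle v,\dot x(t)\rangle=\langle \nabla g(x(t)),\dot x(t)\rangle=(f\circ x)'(t)$. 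This is exactly the chain rule, and the final assertion of the theorem then follows immediately from Lemma~\ref{lem:chaion_rule_gives_assump}.

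The main obstacle is step (i): rigorously justifying that secants of points lying in a $C^1$-manifold can only converge to tangent vectors, so that the velocity $\dot x(t)$ (obtained as such a limit along $T_i$) is forced to sit in $T_{\mathcal{M}}(x(t))$. The remaining steps are mechanical: (ii) is just the smooth chain rule applied to $g$ along difference quotients taken inside $T_i$, and (iii) is a one-line application of \eqref{eqn:subdiff_inclusion}. Notably, the Whitney condition (a) does not appear directly in this argument—it has already been absorbed into the derivation of \eqref{eqn:subdiff_inclusion} in the preceding paragraph of the paper.
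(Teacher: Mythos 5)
Your proposal is correct and follows the same overall strategy as the paper's proof: project the Whitney stratification of $\gph f$ to a stratification $\{\mathcal{M}_i\}$ of $\R^d$ on which $f$ is $C^1$, show that $\dot x(t)$ is tangent to the stratum containing $x(t)$ for a.e.\ $t$, reduce to the smooth chain rule on that stratum via a $C^1$ representative $g$, and conclude with \eqref{eqn:subdiff_inclusion}. The differences lie only in how the two technical sub-steps are executed, and both of your variants work. For the tangency step, the paper shows that the exceptional set $\Omega_i$ of times $t$ with $x(t)\in\mathcal{M}_i$, $\dot x(t)$ existing, and $\dot x(t)\notin T_{\mathcal{M}_i}(x(t))$ is \emph{countable}: any such $t$ must be isolated in $x^{-1}(\mathcal{M}_i)$ (otherwise secants through nearby points of $\mathcal{M}_i$ force tangency, which is exactly your step (i)), and isolated points of a subset of $\R$ form a countable set. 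Your appeal to the Lebesgue density theorem reaches the same conclusion but is heavier machinery than necessary, and it obliges you to check measurability of $T_i=x^{-1}(\mathcal{M}_i)$; the countability argument needs no measurability at all. For the smooth-reduction step, the paper constructs an auxiliary $C^1$ curve $\gamma$ in $\mathcal{M}$ with $\gamma(0)=x(t)$ and $\dot\gamma(0)=\dot x(t)$ and bridges $f\circ x$ to $f\circ\gamma$ via the local Lipschitz constant of $f$; your step (ii) instead evaluates the already-existing derivative $(f\circ x)'(t)$ along the subsequence $t_n\in T_i$, where $f$ and $g$ coincide, which is more elementary and dispenses with the Lipschitz estimate entirely (it is legitimate precisely because the two-sided limit defining $(f\circ x)'(t)$ is known to exist, so any subsequential limit of difference quotients equals it). Step (iii) is identical in both arguments, and your closing remark that Whitney condition (a) enters only through \eqref{eqn:subdiff_inclusion} matches the paper's structure.
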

\begin{proof}
Let $\{M_i\}$ be the Whitney $C^1$-stratification of $\gph f$ and let $\{\mathcal{M}_i\}$ be its coordinate projection onto $\R^d$. Fix an arc $x\colon\R^d\to\R$. Clearly, both $x$ and $f\circ x$ are differentiable at a.e. $t\geq 0$. Moreover, we claim that for a.e. $t\geq 0$, the implication holds:
\begin{equation}\label{eqn:implication_tan}
x(t)\in \mathcal{M}_i\quad \Longrightarrow \quad \dot{x}(t)\in T_{\mathcal{M}_i}(x(t)).
\end{equation}
To see this, fix a manifold $\mathcal{M}_i$ and let $\Omega_i$ be the set of all $t\geq 0$ such that $x(t)\in \mathcal{M}_i$, the derivative $\dot x(t)$ exists, and we have  $\dot{x}(t)\notin T_{\mathcal{M}_i}(x(t))$. If we argue that $\Omega_i$ has zero measure, then so does the union $\cup_i \Omega_i$ and the claim is proved. Fix an arbitrary $t\in \Omega_i$. There exists a closed interval $I$ around $t$ such that $x$ restricted to $I$ intersects $\mathcal{M}_i$ only at $x(t)$, since otherwise we would deduce that $\dot{x}(t)$ lies in $T_{\mathcal{M}_i}(x(t))$ by definition of the tangent space. We may further shrink $I$ such that its endpoints are rational. It follows that $\Omega_i$ may be covered by disjoint closed intervals with rational endpoints. Hence $\Omega_i$ is countable and therefore zero measure, as claimed.

Fix now a real $t>0$ such that $x$ and $f\circ x$ are differentiable at $t$ and the implication \eqref{eqn:implication_tan} holds. Let $\mathcal{M}$ be a stratum containing $x(t)$. Since $\dot{x}(t)$ is  tangent to $\mathcal{M}$ at $x(t)$, there exists a $C^1$-smooth curve $\gamma\colon (-1,1)\to\mathcal{M}$  satisfying $\gamma(0) = x(t)$ and $\dot\gamma(0)=\dot{x}(t)$. Let $g\colon\R^d\to\R$ be any $C^1$ function agreeing with $f$ on a neighborhood of $x(t)$ in $\mathcal{M}$. We claim that $(f\circ x)'(t)=(f\circ \gamma)'(0)$. Indeed, letting $L$ be a Lipschitz constant of $ f$ around $x(t)$, we deduce
\begin{align*}
(f\circ x)'(t) = \lim_{ r \rightarrow 0} \frac{ f(x(t+r)) - f(x(t))}{r} 
&= \lim_{ r \rightarrow 0} \frac{ f(x(t+r)) - f(\gamma(r)) + f(\gamma(r))  - f(\gamma(0))}{r} \\
&= \lim_{ r \rightarrow 0} \frac{ f(x(t+r)) - f(\gamma(r))}{r} + (f\circ \gamma)'(0).
\end{align*}
Notice  $ \tfrac{ |f(x(t+r)) - f(\gamma(r))|}{r} \leq  L\left\|\tfrac{x(t+r) - x(t)  + \gamma(0) -  \gamma(r)}{r}\right\| \rightarrow L\|\dot x(t) - \dot \gamma(0)\| = 0 $ as $ r\rightarrow 0.$
Thus 
$$(f\circ x)'(t)=(f\circ \gamma)'(0)=(g\circ \gamma)'(0)=\langle \nabla g(x),\dot\gamma(0)\rangle=\langle\partial f(x(t)),\dot{x}(t)\rangle,$$
where the last equality follows from \eqref{eqn:subdiff_inclusion}.
\end{proof}

Putting together Theorems~\ref{thm:main_cray_thm_general}, \ref{thm:main_cray_thm_subgradient_case}, \ref{thm:chain_strat} and Lemma~\ref{lem:strat_sard}, we arrive at the main result of our paper. Again for ease of reference, we state subsequential convergence guarantees both for the general process \eqref{eqn:Euler_rec}  and for the specific stochastic subgradient method \eqref{eqn:Euler_rec_clarke}.
\begin{cor}
Let $f\colon\R^d\to\R$ be a locally Lipschitz function that is $C^d$-stratifiable.
\begin{itemize}
	\item {\bf (Stochastic approximation)} Consider the iterates $\{x_k\}_{k \geq 1}$ produced by \eqref{eqn:Euler_rec} and suppose  that Assumption~\ref{ass:duchi_assumption} holds with $G=-\partial f$. Then every limit point of  the iterates $\{x_{k}\}_{k\geq 1}$ is critical for $f$ and the function values $\{f(x_k)\}_{k\geq 1}$ converge.
	\item {\bf (Stochastic subgradient method)}
	Consider the iterates $\{x_k\}_{k \geq 1}$ produced by the stochastic subgradient method \eqref{eqn:Euler_rec_clarke} and suppose that
	Assumption~\ref{ass:stand_ass} holds. Then almost surely, every limit point of  the iterates $\{x_{k}\}_{k\geq 1}$ is critical for $f$ and the function values $\{f(x_k)\}_{k\geq 1}$ converge. 
\end{itemize}
\end{cor}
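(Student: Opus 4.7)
The plan is to reduce this corollary to an assembly of the Lyapunov-type ingredients that have already been developed earlier in the paper, so that the general convergence theorems from Sections \ref{differential_inclusions} and \ref{section:subgradient} can be applied directly with $G=-\partial f$ and $\varphi=f$. The only real content is to confirm that Assumption \ref{ass:b} holds for any locally Lipschitz $C^d$-stratifiable function $f$, and then to invoke the appropriate theorem in each of the two bullets.

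First, I would verify the weak Sard property, item \ref{it:dense} of Assumption \ref{ass:b}. This is exactly Lemma \ref{lem:strat_sard}: for any Whitney $C^d$-stratifiable locally Lipschitz function, the set of critical values has Lebesgue measure zero in $\R$, and in particular its complement is dense. So the hypothesis $C^d$-stratifiability (rather than mere $C^1$-stratifiability) enters only here, to feed Sard's classical theorem applied to each coordinate projection $\pi_i$ from the graph strata.

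Next, I would verify the descent property, item \ref{ass:b22} of Assumption \ref{ass:b}. Since $C^d$-stratifiable implies $C^1$-stratifiable, Theorem \ref{thm:chain_strat} shows that $f$ admits a chain rule in the sense of Definition \ref{def:chain_rule}. Lemma \ref{lem:chaion_rule_gives_assump} then converts the chain rule into exactly the descent condition: along any arc $z$ solving $\dot z\in -\partial f(z)$, we have $\|\dot z(t)\|=\dist(0,\partial f(z(t)))$ for a.e.\ $t$, whence $f\circ z$ is strictly decreasing on any interval beginning at a noncritical point, by outer semicontinuity of $\partial f$.

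With Assumption \ref{ass:b} in hand, the two bullets are immediate. For the stochastic approximation bullet, Assumption \ref{ass:duchi_assumption} is hypothesized with $G=-\partial f$, so Theorem \ref{thm:main_cray_thm_general} (applied with $\varphi=f$) yields that every limit point of $\{x_k\}$ lies in $(-\partial f)^{-1}(0)=\{x:0\in\partial f(x)\}$ and that $\{f(x_k)\}$ converges. For the stochastic subgradient bullet, the lemma immediately following Assumption \ref{ass:stand_ass} shows that Assumption \ref{ass:stand_ass} implies Assumption \ref{ass:duchi_assumption} almost surely, so Theorem \ref{thm:main_cray_thm_subgradient_case} yields the same conclusions almost surely. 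There is no real obstacle; the only subtlety worth flagging is the regularity gap, namely that $C^d$ smoothness of the strata is required for stratified Sard, while $C^1$ suffices for the chain rule, and this is automatically reconciled by the hypothesis of the corollary.
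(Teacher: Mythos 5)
Your proposal is correct and assembles exactly the same ingredients the paper uses: Lemma~\ref{lem:strat_sard} for the weak Sard property, Theorem~\ref{thm:chain_strat} together with Lemma~\ref{lem:chaion_rule_gives_assump} for the descent property, and then Theorems~\ref{thm:main_cray_thm_general} and \ref{thm:main_cray_thm_subgradient_case} for the two bullets. This matches the paper's own (one-line) proof, and your remark about the $C^d$ versus $C^1$ regularity split is an accurate reading of where each hypothesis is used.
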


Verifying Whitney stratifiability is often an easy task. Indeed, there are a number of well-known and easy to recognize function classes, whose members are automatically Whitney stratifiable. We now briefly review such classes, beginning with the semianalytic setting.

A closed set $Q$ is called {\em semianalytic} if it can be written as a finite union of sets, each having the form
$$\{x\in\R^d:p_{i}(x)\leq 0\quad \textrm{for }i=1,\ldots, \ell\}$$
for some real-analytic functions $p_1, p_2,\ldots,p_{\ell}$ on $\R^d$. 
If the functions $p_1, p_2,\ldots,p_{\ell}$ in the description above are polynomials, then $Q$ is said to be a {\em semialgebraic} set. A well-known result of {\L}ojasiewicz \cite{loja_semi} shows that any semianalytic set admits a Whitney $C^{\infty}$ stratification. 
Thus,  the results of this paper apply to functions with semianalytic graphs. While the class of such functions is broad, it is sometimes difficult to recognize its members  
as semianalyticity is not preserved under some basic operations, such as projection onto a linear subspace. On the other hand, there are large subclasses of semianalytic sets that are easy to recognize.

For example, every semialgebraic set is semianalytic, but in contrast to the semianalytic case, semi-algebraic sets are stable with respect to all boolean operations and projections onto subspaces. The latter property is a direct consequence of the celebrated Tarski-Seidenberg Theorem. Moreover, semialgebraic sets are typically easy to recognize using quantifier elimination; see \cite[Chapter 2]{Coste-semi} for a detailed discussion. Importantly, compositions of semialgebraic functions are semialgebraic. 

A far reaching axiomatic extension of semialgebraic sets, whose members are also Whitney stratifiable, is built from ``o-minimal structures''. Loosely speaking, sets that are definable in an o-minimal structure share the same robustness properties and attractive analytic features as semialgebraic sets. For the sake of completeness, let us give a formal definition, following Coste \cite{Coste-min} and van den Dries-Miller\cite{Dries-Miller96}.

\begin{definition}[o-minimal structure]
{\rm
An {\em o-minimal structure} is a sequence of Boolean algebras $\mathcal{O}_d$ of subsets of $\R^d$ such that for each $d\in \mathbb{N}$:
\begin{enumerate}[(i)]
\item if $A$ belongs to $\mathcal{O}_d$, then $A\times \R$ and $\R\times A$ belong to $\mathcal{O}_{d+1}$;
\item if $\pi\colon\R^{d}\times \R\to\R^d$ denotes the coordinate projection onto $\R^d$, then for any $A$ in $\mathcal{O}_{d+1}$ the set $\pi(A)$ belongs to  $\mathcal{O}_{d}$;
\item  $\mathcal{O}_{d}$ contains all sets of the form $\{x\in \R^d: p(x)=0\}$, where $p$ is  a polynomial on $\R^d$;
\item the elements of $\mathcal{O}_{1}$ are exactly the finite unions of intervals (possibly infinite) and points.
\end{enumerate}
The sets $A$ belonging to $\mathcal{O}_{d}$, for some $d\in \mathbb{N}$, are called {\em definable in the o-minimal structure}.}
\end{definition}

As in the semialgebraic setting, any function definable in an o-minimal structure admits a Whitney $C^p$ stratification, for any $p\geq 1$ (see e.g. \cite{Dries-Miller96}). Beyond semialgebraicity, Wilkie showed that that there is an o-minimal structure that simultaneously contains both the graph of the exponential function $x\mapsto e^x$ and all semi-algebraic sets~\cite{exp_omin}. 

\subsubsection*{A corollary for deep learning}
Since the composition of two definable functions is definable, we conclude that nonsmooth deep neural networks built from definable pieces---such as ReLU, quadratics $t^2$, hinge losses $\max\{0,t\}$, and SoftPlus $\log(1+e^t)$ functions---are themselves definable. Hence, the results of this paper endow stochastic subgradient methods, applied to definable deep networks, with rigorous convergence guarantees. Due to the importance of subgradient methods in deep learning, we make this observation precise in the following corollary which provides a rigorous convergence guarantee for a wide class of deep learning loss functions that are recursively defined, including convolutional neural networks, recurrent neural networks, and feed-forward networks.

\begin{cor}[Deep networks]\label{cor:deep}
For each given data pair $(x_j,y_j)$ with $j=1,\ldots, n$, recursively define:
$$a_0=x_j, \qquad a_i=\rho_i(V_i(w)a_{i-1}) ~~\forall i=1,\ldots,L,\qquad f(w;x_j,y_j)=\ell(y_j,a_L),$$
where
\begin{enumerate}
	\item $V_i(\cdot)$ are linear maps into the space of matrices.
	\item $\ell(\cdot;\cdot)$ is any definable loss function, such as the logistic loss $\ell(y;z)=\log(1+e^{-yz})$, the hinge loss $\ell(y;z)=\max\{0,1-yz\}$, absolute deviation loss $\ell(y;z) = |y-z|$, or the square loss $\ell(y;z) = \frac12 (y-z)^2$.
	\item $\rho_i$ are definable activation functions applied coordinate wise, such as those whose domain can be decomposed into finitely many intervals on which it coincides with $\log t$, $\exp(t)$, $\max(0,t)$, or $\log(1+e^t)$.
\end{enumerate}
Let $\{w_k\}_{k\geq 1}$ be the iterates produced by the stochastic subgradient method on the deep neural network loss 
$
f(w):=\sum_{j=1}^n f(w;x_j,y_j)
$, and suppose that the standing assumption~\ref{ass:stand_ass} holds.\footnote{In the assumption, replace $x_k$ with $w_k$, since we now use $w_k$ to denote the stochastic subgradient iterates.} Then almost surely, every limit point $w^{*}$ of the iterates $\{w_k\}_{k\geq 1}$ is critical for $f$, meaning $0\in \partial f(w^*)$, and the function values $\{f(w_k)\}_{k\geq 1}$ converge.
\end{cor}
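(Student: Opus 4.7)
The plan is to reduce the corollary to the preceding stratifiable corollary (the one right before it that concludes almost-sure subsequential convergence for any locally Lipschitz $C^d$-stratifiable function). To invoke that corollary, I need to verify exactly two properties of $f(w)=\sum_{j=1}^n f(w;x_j,y_j)$: (i) $f$ is locally Lipschitz on $\R^d$, and (ii) $f$ is Whitney $C^d$-stratifiable. Everything else (the standing Assumption~\ref{ass:stand_ass}) is given by hypothesis.

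First I would verify definability of each building block in a single o-minimal structure $\mathcal{O}$. By Wilkie's theorem, there exists an o-minimal structure simultaneously containing all semialgebraic sets and the graph of $x\mapsto e^x$; in this structure $\log$, $\exp$, polynomials (in particular $t^2$ and the hinge function $\max\{0,t\}$, which is semialgebraic), and $\log(1+e^t)$ are all definable. Since each $\rho_i$ agrees with one of these on each of finitely many intervals that partition its domain, $\rho_i$ is definable by the Boolean-algebra axiom. The linear maps $V_i(\cdot)$ are polynomial, hence semialgebraic, hence definable; matrix-vector multiplication $(A,v)\mapsto Av$ is polynomial, hence definable; and the loss functions $\ell$ listed are definable in $\mathcal{O}$ for the same reasons.

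Next I would use the fact that the class of definable functions between definable sets is closed under composition and finite sums (this follows quickly from closure of $\mathcal{O}$ under Boolean operations and coordinate projections). Applying this closure property to the recursive definition
\begin{equation*}
a_0=x_j,\qquad a_i=\rho_i(V_i(w)a_{i-1}),\qquad f(w;x_j,y_j)=\ell(y_j,a_L),
\end{equation*}
I conclude that each $w\mapsto f(w;x_j,y_j)$ is definable in $\mathcal{O}$, and hence so is $f(w)=\sum_j f(w;x_j,y_j)$. By the standard result cited in the paper (following \cite{Dries-Miller96}) that any definable function admits a Whitney $C^p$-stratification for every $p\geq 1$, the function $f$ is Whitney $C^d$-stratifiable, establishing (ii).

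For (i), I would note that each activation $\rho_i$ and each loss $\ell$ in the allowed list is locally Lipschitz on $\R$ (the only nontrivial point is that on bounded subsets of their domains, $\log$, $\exp$, $\max\{0,\cdot\}$ and $\log(1+e^{\cdot})$ have bounded derivatives away from any singularities; under the implicit assumption that intermediate activations stay in regions where $\rho_i$ is locally Lipschitz—which holds automatically on the whole line for ReLU and SoftPlus—this transfers to the composition). Linear maps and matrix-vector products are smooth, hence locally Lipschitz, and finite compositions and sums of locally Lipschitz maps are locally Lipschitz. Therefore $f$ is locally Lipschitz on $\R^d$. With (i) and (ii) in hand, the preceding stratifiable corollary applies directly and delivers the claimed almost-sure criticality of every limit point of $\{w_k\}$ and convergence of $\{f(w_k)\}$.

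The main obstacle I anticipate is purely bookkeeping: ensuring that the single o-minimal structure actually accommodates every building block simultaneously (so that we may legitimately use closure under composition), and being careful about local Lipschitzness when an activation such as $\log t$ is used, where one must argue that the preceding layers keep the argument in a region bounded away from the singularity, or alternatively restrict to activations whose domain is all of $\R$. No new ideas are needed—once definability and local Lipschitzness are certified, the heavy lifting is done by Theorem~\ref{thm:chain_strat}, Lemma~\ref{lem:strat_sard}, and Theorem~\ref{thm:main_cray_thm_subgradient_case}.
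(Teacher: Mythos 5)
Your proposal is correct and follows essentially the same route as the paper: the corollary is justified there by exactly this argument---each building block is definable in Wilkie's o-minimal structure, definability is closed under composition and finite sums, definable functions are Whitney $C^p$-stratifiable by \cite{Dries-Miller96}, and the stratifiable-case corollary then applies. Your extra care about local Lipschitzness near the singularity of $\log t$ is a reasonable refinement of a point the paper leaves implicit, but it does not change the argument.
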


\section{Proximal extensions}\label{sec:prox_ext}

In this section, we extend most of the results in Sections~\ref{section:subgradient} and \ref{sec:verifying} on unconstrained problems to a ``proximal'' setting and comment on sufficient conditions to ensure boundedness of the iterates. The arguments follow quickly by combining the techniques developed by Duchi-Ruan \cite{duchi_ruan} with those presented in Section~\ref{sec:verifying}. Consequently, all the proofs are in Appendix~\ref{app}.

Setting the stage, consider the composite  optimization problem 
\begin{equation}\label{eqn:comp_prob}
\min_{x\in\cX}~ \varphi(x):=f(x) + g(x),
\end{equation}
where $f\colon\R^d\to\R$ and $g\colon\R^d\to\R$ are locally Lipschitz functions and $\cX$ is a closed set. 
As is standard in the literature on proximal methods, we will say that $x\in \cX$ is a {\em composite critical point} of the  problem \eqref{eqn:comp_prob} if the inclusion holds:
\begin{equation}\label{eqn:critical}
0\in \partial f(x)+\partial g(x)+N_{\cX}(x).
\end{equation}
Here, the symbol $N_{\cX}(x)$ denotes the Clarke normal cone to the closed set $\cX \subseteq \RR^d$ at $x\in \cX$. We refer the reader to Appendix~\ref{app} for a formal definition.
We only note that when $\cX$ is a closed convex set, $N_{\cX}$ reduces to the normal cone in the sense of convex analysis, while for a $C^1$-smooth manifold $\cX$, it reduces to the normal space in the sense of differential geometry. It follows from \cite[Corollary 10.9]{RW98} that local minimizers of \eqref{eqn:comp_prob} are necessarily composite critical.
A real number $r\in \R$ is called a {\em composite critical value} if equality, $r=f(x)+g(x)$, holds for some composite critical point $x$. 

Thus we will be interested in an extension of the stochastic subgradient method that tracks a trajectory of the differential inclusion
\begin{equation} 
\dot{z}(t)\in -(\partial f+\partial g+N_{\cX})(z(t))\qquad \textrm{for a.e. }t\geq 0,
\end{equation}
and subsequentially converges to a composite critical point of \eqref{eqn:comp_prob}. 
Seeking to apply the techniques of Section~\ref{differential_inclusions}, we can simply set $G=-\partial f-\partial g-N_{\cX}$ in the notation therein. Note that $G$ thus defined is not necessarily a subdifferential of a single function because equality in the subdifferential sum rule \cite[Corollary 10.9]{RW98}  can fail when the summands are not subdifferentially regular.

We now aim to describe the proximal stochastic subgradient method for the problem \eqref{eqn:comp_prob}. There are two ingredients we must introduce: a stochastic subgradient oracle for $f$ and the proximity map of $g+\delta_{\cX}$, where $\delta_{\cX}$ is the indicator function of $\cX$. We describe the two ingredients in turn.

\paragraph{Stochastic subgradient oracle}
Our model of the stochastic subgradient oracle follows that of the influential work \cite{robust_subgrad}.
 Fix a probability space $(\Omega,\mathcal{F},P)$ and equip $\R^d$ with the Borel $\sigma$-algebra. We suppose that there exists a measurable mapping $\zeta \colon \RR^d \times \Omega \rightarrow \RR^d$ satisfying: 
$$
\EE_{\omega}\left[\zeta(x,\omega)\right] \in \partial f(x)\qquad \textrm{for all }x\in \R^d.
$$
Thus  after sampling $\omega\sim P$, the vector $\zeta(x,\omega)$ can serve as a stochastic estimator for a true subgradient of $f$. 

\paragraph{Proximal map}
 Standard deterministic proximal splitting methods utilize the proximal map of $g+\delta_{\cX}$, namely:
$$z\mapsto \argmin_{x\in \cX}\{g(x)+\tfrac{1}{2\alpha}\|x-z\|^2\}.$$
Since we do not impose convexity assumptions on $g$ and $\cX$, this map can be set-valued. Thus we must pass to a measurable selection. Indeed, supposing that $g$ is bounded from below on $\cX$,  the result~\cite[Exercise 14.38]{RW98} guarantees that there exists a measurable selection $T_{(\cdot)}(\cdot) : (0, \infty) \times \RR^d \rightarrow \RR^d$, such that 
$$T_{\alpha}(z) \in \argmin_{x\in\mathcal{X}}\,\left\{g(x)+\tfrac{1}{2\alpha}\|x-z\|^2\right\} \qquad \text{for all }\alpha> 0, z \in \RR^d.$$ 

We can now formally state the algorithm. Given an iterate $x_k\in \cX$, the {\em proximal stochastic subgradient method} performs the update
\begin{equation}\label{eqn:stoc_prox_subgrad}
\left\{\begin{aligned}
&\textrm{Sample }\omega_k\sim P\\
&x_{k+1} = T_{\alpha_k}(x_k - \alpha_k\zeta(x_k, \omega_k)).
\end{aligned}\right\}.
\end{equation}
Here $\{\alpha_k\}_{k\geq 1}$ is a positive control sequence.  We will analyze the algorithm under the following two assumptions, akin to Assumptions~\ref{ass:stand_ass} and \ref{ass:b} of Section~\ref{section:subgradient}. 
Henceforth, define the set-valued map $G\colon\cX\rightrightarrows\R^d$ by
\begin{equation}\label{eqn:weird_composg}
 G(x)=-\partial f(x)-\partial g(x)-N_{\cX}(x).
\end{equation}

\begin{assumption}[Standing assumptions for the proximal stochastic subgradient method]\label{ass:stand_ass2}{\hfill }
\begin{enumerate}
\item\label{it:mild1} $\cX$ is closed, $f$ and $g$ are locally Lipschitz, and $g$ is bounded from below on $\cX$.
\item\label{it:mild2}  There exists a function $L\colon\R^d\to\R$, which is bounded on bounded sets, satisfying
$$L(x)\geq \sup_{z: g(z)\leq g(x)}\frac{g(x)-g(z)}{\|x-z\|}.$$
% and is either convex, globally Lipschitz, or coercive.
\item\label{it:steps2_summable} The sequence $\{\alpha_k\}_{k\geq 1}$ is nonnegative, square summable, but not summable: $$\alpha_k\geq 0, \qquad\sum_{k=1}^{\infty}\alpha_k=\infty,\qquad \textrm{and}\qquad \sum_{k=1}^{\infty}\alpha_k^2<\infty.$$
\item\label{it:mild4} Almost surely, the iterates are bounded: $\sup_{k \geq 1} \|x_k\| < \infty$.
\item\label{it:martingale_diff} 
There exists a function $p \colon \RR^d \rightarrow \RR_+$, that is bounded on bounded sets,  such that $$\EE_{\omega}\left[\zeta(x, \omega) \right] \in \partial f(x) \qquad\textrm{ and }\qquad \EE_{\omega}\left[\left\|\zeta(x, \omega)\right\|^2\right]\leq p(x)\qquad \textrm{for all }x\in \cX.$$
\item\label{it:loc_integrability} For every convergent sequence $\{z_k\}_{k \geq 1} $, we have
$$
\EE_{\omega}\left[\sup_{k \geq 1} \|\zeta(z_k,\omega)\|\right] < \infty.
$$

\end{enumerate}
\end{assumption}

\begin{assumption}[Lyapunov condition in proximal minimization]\label{ass:bhat}\hfill
	\begin{enumerate}
		\item\label{it:densehat} {\bf (Weak Sard)} The set of composite noncritical values of \eqref{eqn:comp_prob} is dense in $\R$.
		\item\label{ass:b2hat} {\bf (Descent)}	Whenever $z\colon\R_+\to\cX$ is an arc satisfying
		 the differential inclusion $$\dot{z}(t)\in -(\partial f+\partial g+N_{\cX})(z(t))\qquad \textrm{for a.e. }t\geq 0,$$ and $z(0)$ is not a composite critical point of \eqref{eqn:comp_prob}, there exists a real $T>0$ satisfying 
		$$\varphi(z(T))<\sup_{t\in [0,T]} \varphi(z(t))\leq \varphi(z(0)).$$ 
	\end{enumerate}	
\end{assumption}

Let us make a few comments. Properties \ref{ass:stand_ass2}.\ref{it:mild1}, \ref{ass:stand_ass2}.\ref{it:steps2_summable}, \ref{ass:stand_ass2}.\ref{it:mild4}, and \ref{ass:stand_ass2}.\ref{it:martingale_diff} are mild and completely expected in light of the results in the previous sections. Property~\ref{ass:stand_ass2}.\ref{it:loc_integrability} is a technical condition ensuring that the expected maximal noise in the stochastic subgradient is bounded along any convergent sequence. Finally, property \ref{ass:stand_ass2}.\ref{it:mild2} is a mild technical condition on function $g$ that we allow. In particular, it holds for any convex, globally Lipschitz, or coercive locally Lipschitz function. We record this observation in the following lemma.
\begin{lem}
Consider any function $g\colon\R^d\to\R_+$ that is either convex, globally Lipschitz, or locally Lipschitz and coercive. Then $g$ satisfies property \ref{it:mild2} in Assumption~\ref{ass:stand_ass2}.
\end{lem}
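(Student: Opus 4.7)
The plan is to dispatch the three cases separately, since each exploits a different structural feature of $g$.

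\textbf{Globally Lipschitz case.} If $g$ has global Lipschitz constant $L_0$, then $g(x)-g(z)\leq L_0\|x-z\|$ holds for all $x,z\in\R^d$, so the constant function $L(x)\equiv L_0$ bounds the supremum without even invoking the sublevel-set restriction.

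\textbf{Convex case.} Since $g$ is real-valued and convex on $\R^d$, it is automatically locally Lipschitz with nonempty compact convex subdifferential at each point. For any $v\in \partial g(x)$, the standard convex subgradient inequality $g(z)\geq g(x)+\langle v, z-x\rangle$ rearranges to $g(x)-g(z)\leq \|v\|\cdot\|x-z\|$ for every $z\in\R^d$, again making no use of the sublevel-set restriction. Hence I would set $L(x):=\sup\{\|v\|: v\in \partial g(x)\}$. Boundedness on bounded sets then follows from the classical fact that a convex function is Lipschitz on every compact set, with the Lipschitz constant uniformly bounding the subdifferential norms on that set.

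\textbf{Locally Lipschitz and coercive case.} This is the case where the sublevel-set restriction is genuinely needed, and I would split the supremum into a near piece and a far piece. By coercivity, the sublevel set $\{z:g(z)\leq g(x)\}$ is bounded. For $z$ with $g(z)\leq g(x)$ and $\|z-x\|\leq 1$, the local Lipschitz constant $L_x$ of $g$ on $B_1(x)$ gives $\tfrac{g(x)-g(z)}{\|x-z\|}\leq L_x$. For $\|z-x\|>1$, using $g\geq 0$ and $\|x-z\|>1$, we get $\tfrac{g(x)-g(z)}{\|x-z\|}\leq g(x)-g(z)\leq g(x)$. Setting $L(x):=\max\{L_x, g(x)\}$ therefore works. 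To check boundedness on bounded sets: if $B$ is bounded, the enlargement $\bigcup_{x\in B}B_1(x)$ is bounded, so local Lipschitzness of $g$ yields a uniform Lipschitz constant there that bounds all $L_x$ for $x\in B$, while $\sup_{x\in B}g(x)$ is finite by continuity.

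The only mild obstacle is the coercive case, whose argument hinges on splitting into near and far regimes and invoking nonnegativity of $g$ to tame the far regime. The convex and globally Lipschitz cases are direct applications of the respective defining inequalities and do not even require the sublevel-set restriction.
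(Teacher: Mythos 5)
Your proof is correct. The globally Lipschitz and convex cases are essentially identical to the paper's argument (the paper takes $L(x)=\dist(0,\partial g(x))$, the minimal-norm subgradient, where you take the supremum of subgradient norms; both are bounded on bounded sets and the difference is immaterial). The third case is where you genuinely diverge: the paper uses coercivity to make the sublevel set $[g\leq g(x)]$ compact and then sets $L(x)$ equal to the Lipschitz modulus of $g$ on that compact set, whereas you split into the regimes $\|z-x\|\leq 1$ and $\|z-x\|>1$, handling the near regime with the local Lipschitz constant on $\mathcal{B}_1(x)$ and the far regime with the nonnegativity of $g$ via $\tfrac{g(x)-g(z)}{\|x-z\|}\leq g(x)-g(z)\leq g(x)$. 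Your argument is more elementary and, notably, never actually uses coercivity (the sentence about the sublevel set being bounded is vestigial): it shows that \emph{any} locally Lipschitz function $g\colon\R^d\to\R_+$ satisfies property \ref{it:mild2}, which strictly subsumes the paper's third case and, given that nonnegativity is already in the hypothesis, makes the coercivity assumption redundant. The paper's route, by contrast, would extend to functions that are coercive but not bounded below by zero, so each proof buys a slightly different generalization of the stated lemma.
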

\begin{proof}
	Fix a point $x$ and a point $z$ satisfying $g(z)\leq g(x)$. 
Let us look at each case and upper bound the error $g(x)-g(z)$. Suppose first that $g$ is convex. Then for the vector $v\in \partial g(x)$ of minimal norm, we have
$$
g(x) - g(z) \leq  \langle v,x-z\rangle\leq \|v\|\|x-z\|= L(x)\cdot \|x-z\|.
$$
where $L(x):=\dist(0,\partial g(x))$.
If $f$ is globally Lipschitz, then clearly we have
$$
g(x) - g(z) \leq  L(x)\cdot\|x - z\|.
$$
where $L(x)$ is identically equal to the global Lipschitz constant of $g$.
Finally, in the third case, suppose that $g$ is coercive and locally Lipschitz. We deduce
$$
g(x) - g(z) \leq  L(x)\cdot\|x - z\|,
$$
where $L(x)$ is the Lipschitz modulus of $g$ on the compact sublevel set $[g\leq g(x)]$. Since $g$ is locally Lipschitz continuous, in all three cases, the function $L(\cdot)$ is bounded on bounded sets.	
\end{proof}

%Assumption~\ref{ass:bhat} is exactly Assumption~\ref{ass:b_general} specialized to our set-valued map $G$ in \eqref{eqn:weird_composg}. 

Under the two assumptions, \ref{ass:stand_ass2} and \ref{ass:bhat}, we obtain the following subsequential convergence guarantee. The argument in the appendix is an application of Theorem~\ref{thm:main_cray_thm_general}. To this end, we show that Assumption~\ref{ass:stand_ass2} implies Assumption~\ref{ass:duchi_assumption} almost surely, while Assumption~\ref{ass:bhat} is clearly equivalent to Assumption~\ref{ass:b_general}.
\begin{thm}\label{thm:main_cray_thmhat}
	Suppose that Assumptions \ref{ass:stand_ass2} and \ref{ass:bhat} hold. Then almost surely, every limit point of the iterates $\{x_{k}\}_{k\geq 1}$ produced by the proximal stochastic subgradient method \eqref{eqn:stoc_prox_subgrad}  is composite critical for \eqref{eqn:comp_prob} and the function values $\{\varphi(x_k)\}_{k\geq 1}$ converge.
\end{thm}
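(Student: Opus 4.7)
The plan is to cast the proximal update \eqref{eqn:stoc_prox_subgrad} in the abstract form \eqref{eqn:Euler_rec} and then appeal directly to Theorem~\ref{thm:main_cray_thm_general} with the set-valued map $G = -\partial f - \partial g - N_{\cX}$ defined in \eqref{eqn:weird_composg} and with the continuous Lyapunov function $\varphi = f + g$. The first step is to write down the first-order optimality conditions for the proximal subproblem: since $x_{k+1}$ minimizes $\langle \zeta(x_k,\omega_k),\cdot\rangle + g + \delta_{\cX} + \tfrac{1}{2\alpha_k}\|\cdot - x_k\|^2$, there exist vectors $v_{k+1}\in \partial g(x_{k+1})$ and $n_{k+1}\in N_{\cX}(x_{k+1})$ with
\[
x_{k+1} = x_k - \alpha_k\bigl(\zeta(x_k,\omega_k) + v_{k+1} + n_{k+1}\bigr).
\]
Splitting $\zeta(x_k,\omega_k)$ into its conditional mean $w_k := \EE_{\omega}[\zeta(x_k,\omega_k)\mid\mathcal{F}_k]\in \partial f(x_k)$ and martingale-difference noise $\xi_k := -(\zeta(x_k,\omega_k)-w_k)$, this becomes $x_{k+1} = x_k + \alpha_k(y_k + \xi_k)$ with $y_k := -w_k - v_{k+1} - n_{k+1}$.

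The main quantitative step is a uniform bound on $\|x_{k+1}-x_k\|/\alpha_k$, which then bounds $v_{k+1}+n_{k+1}$ and hence $y_k$. Comparing the value of the proximal objective at $x_{k+1}$ with that at $x_k$ yields
\[
\tfrac{1}{2\alpha_k}\|x_{k+1}-x_k\|^2 \leq g(x_k) - g(x_{k+1}) + \|\zeta(x_k,\omega_k)\|\cdot\|x_{k+1}-x_k\|.
\]
If $g(x_{k+1})\leq g(x_k)$, the growth condition \ref{ass:stand_ass2}.\ref{it:mild2} gives $g(x_k)-g(x_{k+1})\leq L(x_k)\|x_{k+1}-x_k\|$; otherwise the left-hand side is already controlled by the inner-product term. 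Either way one obtains
\[
\|x_{k+1}-x_k\| \leq 2\alpha_k\bigl(L(x_k) + \|\zeta(x_k,\omega_k)\|\bigr),
\]
and rearranging the optimality equation gives $\|v_{k+1}+n_{k+1}\| \leq 3\|\zeta(x_k,\omega_k)\| + 2L(x_k)$. Combined with \ref{ass:stand_ass2}.\ref{it:mild4} and \ref{ass:stand_ass2}.\ref{it:martingale_diff}, this yields $\sup_k \|y_k\| < \infty$ almost surely.

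With these preparations I verify the items of Assumption~\ref{ass:duchi_assumption} almost surely. Item~\ref{item:iterates_in_Q} holds because $x_{k+1}\in\cX$ by construction and $\cX$ is closed. Item~\ref{item:gradients_bounded} was just established. Item~\ref{item:steps_duchi} is assumed. For the weighted-noise condition \ref{item:noise}, the standard $L^2$-martingale convergence argument (as in the proof of the corresponding lemma in Section~\ref{section:subgradient}, now using \ref{ass:stand_ass2}.\ref{it:martingale_diff} and \ref{ass:stand_ass2}.\ref{it:loc_integrability}) shows $\sum_k \alpha_k\xi_k$ converges almost surely. The delicate item is the averaged outer-semicontinuity \ref{item:average_OSC}: given a subsequence with $x_{k_j}\to\bar x$, one has $x_{k_j+1}\to\bar x$ as well by the step-size estimate, so outer semicontinuity of $\partial f$, $\partial g$ and $N_{\cX}$ implies that for every $\varepsilon>0$ and all large $j$,
\[
w_{k_j}\in \partial f(\bar x) + \varepsilon\mathcal{B},\qquad v_{k_j+1}\in \partial g(\bar x) + \varepsilon\mathcal{B},\qquad n_{k_j+1}\in N_{\cX}(\bar x) + \varepsilon\mathcal{B}.
\]
Averaging and using that the three limit sets are convex and summable to $-G(\bar x)$ gives $\dist(\tfrac{1}{n}\sum_{j=1}^n y_{k_j},G(\bar x))\to 0$. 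The main obstacle of the argument is exactly this step, since the normal cone $N_{\cX}$ is unbounded; boundedness of the specific sequence $n_{k_j+1}$ along the subsequence, which comes from the proximal step and the $L(\cdot)$ growth condition, is what makes outer semicontinuity usable here.

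Finally, Assumption~\ref{ass:bhat} is, by construction, Assumption~\ref{ass:b_general} applied to $G = -\partial f - \partial g - N_{\cX}$ and $\varphi = f + g$ (continuity of $\varphi$ and lower boundedness on $\cX$ follow from \ref{ass:stand_ass2}.\ref{it:mild1}, and $\varphi$ may be extended to a lower bounded continuous function on $\R^d$, or the discussion in Section~\ref{differential_inclusions} can be trivially localized to $\cX$ since all limit points lie in $\cX$). Invoking Theorem~\ref{thm:main_cray_thm_general} then yields that every limit point of $\{x_k\}$ lies in $G^{-1}(0)$, i.e.\ is composite critical for \eqref{eqn:comp_prob}, and that $\varphi(x_k)$ converges. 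The detailed measurability and martingale bookkeeping is deferred to Appendix~\ref{app}.
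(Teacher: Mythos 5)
Your high-level plan coincides with the paper's: rewrite the proximal update in the form \eqref{eqn:Euler_rec}, verify Assumption~\ref{ass:duchi_assumption} almost surely for $G=-\partial f-\partial g-N_{\cX}$, note that Assumption~\ref{ass:bhat} is Assumption~\ref{ass:b_general}, and invoke Theorem~\ref{thm:main_cray_thm_general}. However, your specific choice of drift/noise decomposition creates a genuine gap. You set $y_k:=-w_k-v_{k+1}-n_{k+1}$ with $v_{k+1}\in\partial g(x_{k+1})$, $n_{k+1}\in N_{\cX}(x_{k+1})$, and place only the subgradient-oracle noise into $\xi_k$. Your bound $\|v_{k+1}+n_{k+1}\|\leq 3\|\zeta(x_k,\omega_k)\|+2L(x_k)$ is correct, but the conclusion $\sup_k\|y_k\|<\infty$ a.s.\ does not follow: Assumption~\ref{ass:stand_ass2}.\ref{it:martingale_diff} only bounds the \emph{second moment} $\EE_\omega\|\zeta(x_k,\omega)\|^2\leq p(x_k)$, and Assumption~\ref{ass:stand_ass2}.\ref{it:loc_integrability} controls $\sup_k\|\zeta(z_k,\omega)\|$ for a \emph{single fixed} $\omega$ over a deterministic convergent sequence, not over the independent samples $\omega_k$ actually drawn by the algorithm. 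A uniformly bounded second moment is compatible with $\sup_k\|\zeta(x_k,\omega_k)\|=\infty$ almost surely (take $\zeta_k=\sqrt{k}$ with probability $1/k$ and $0$ otherwise: second moments are bounded, yet by the second Borel--Cantelli lemma the large values recur infinitely often). So Assumption~\ref{ass:duchi_assumption}.\ref{item:gradients_bounded} fails for your $y_k$, and the same unboundedness undermines the outer-semicontinuity step for $n_{k_j+1}$ in your verification of Item~\ref{item:average_OSC}. Since Theorem~\ref{thm:duchi_arzela} needs $\sup_k\|y_k\|<\infty$ for the Arzel\`a--Ascoli compactness argument, this is not a cosmetic issue.

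The paper circumvents exactly this difficulty by conditioning: it defines $y_k$ as an element of the \emph{deterministic} map $G_k(x_k)=-\partial f(x_k)-\alpha_k^{-1}\EE_\omega\left[x_k-\alpha_k\zeta(x_k,\omega)-T_{\alpha_k}(x_k-\alpha_k\zeta(x_k,\omega))\right]$, so that Lemma~\ref{lem:prox} together with Jensen's inequality gives the deterministic bound $\|y_k\|\leq \dist(0,\partial f(x_k))+2L(x_k)+3\sqrt{p(x_k)}$, which is a.s.\ bounded. The centered proximal displacement (not just the oracle noise) is placed into $\xi_k$, whose weighted sums converge by the $L^2$-martingale argument using only second moments. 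Item~\ref{item:average_OSC} is then verified for these averaged maps via the optimality conditions of the proximal subproblem, outer semicontinuity, and a dominated convergence argument, with Assumption~\ref{ass:stand_ass2}.\ref{it:loc_integrability} supplying the integrable dominating function. If you adopt this conditional-expectation decomposition, the rest of your outline goes through.
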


Finally, we must now understand problem classes that satisfy Assumption~\ref{ass:bhat}. To this end, analogously to Definition~\ref{def:chain_rule}, we say that $\cX$ {\em admits a chain rule} if for any arc $z \colon \RR_+ \rightarrow \cX$, equality holds $$\dotp{N_{\cX}(z(t)), \dot z(t)} = 0 \qquad\textrm{ for a.e. }t\geq 0.$$ Whenever $\cX$ is Clarke regular or Whitney stratifiable, $\cX$ automatically admits a chain rule. Indeed, the argument is identical to that of  Lemma~\ref{lem:chain_rule_subdiff_reg} and Theorem~\ref{thm:chain_strat}.
 As in the unconstrained case,  Assumption~\ref{ass:bhat}.\ref{ass:b2hat} is true as long as $f$, $g$, and $\cX$  admit a chain rule.

\begin{lem}\label{lem:chaion_rule_gives_assump2}
Consider the optimization problem \eqref{eqn:comp_prob} and suppose that $ f$, $ g$, and $\cX$ admit a chain rule. Let $z\colon\R_+\to\cX$ be any arc satisfying the differential inclusion $$\dot{z}(t)\in G(z(t))\qquad \textrm{for a.e. }t\geq 0.$$ Then equality $\|\dot{z}(t)\|=\dist(0,G(z(t)))$ holds for a.e. $t\geq 0$, and therefore we have the estimate
	\begin{equation}\label{eqn:lyap2}
	\varphi(z(0))-\varphi(z(t))= \int_{0}^t \dist^2\left(0;G(z(\tau))\right)\,d\tau,\qquad \forall t\geq 0.
	\end{equation}
	In particular, property \ref{ass:b2hat} of Assumption~\ref{ass:bhat} holds.
\end{lem}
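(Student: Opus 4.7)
The plan is to mirror the argument of Lemma~\ref{lem:chaion_rule_gives_assump} step by step, treating the composite map $G=-\partial f-\partial g-N_{\cX}$ in place of a single Clarke subdifferential. Fix a time $t\geq 0$ at which $z$ is differentiable and all three chain rules (for $f$, for $g$, and for $\cX$) apply to the arc $z$ simultaneously; this excludes only a null set. The chain rule for $f$ asserts that $(f\circ z)'(t)=\langle v,\dot z(t)\rangle$ for \emph{every} $v\in\partial f(z(t))$, so $\dot z(t)$ is orthogonal to the subspace $W_f:=\spann(\partial f(z(t))-\partial f(z(t)))$. Analogously $\dot z(t)\perp W_g:=\spann(\partial g(z(t))-\partial g(z(t)))$, and the hypothesis $\langle N_{\cX}(z(t)),\dot z(t)\rangle=0$ gives $\dot z(t)\perp W_N:=\spann N_{\cX}(z(t))$.

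Next, decompose $\dot z(t)=-a-b-c$ with $a\in\partial f(z(t))$, $b\in\partial g(z(t))$, $c\in N_{\cX}(z(t))$; every other $w\in G(z(t))$ has the form $w=\dot z(t)+(a-a')+(b-b')+(c-c')$, so $G(z(t))\subseteq \dot z(t)+W$ with $W:=W_f+W_g+W_N$. Since $\dot z(t)\in W^{\perp}\cap(\dot z(t)+W)$, the same elementary linear-algebra identity used in Lemma~\ref{lem:chaion_rule_gives_assump} yields $\|\dot z(t)\|=\dist(0;\dot z(t)+W)\geq \dist(0;G(z(t)))$, and the reverse inequality is trivial, giving $\|\dot z(t)\|=\dist(0;G(z(t)))$. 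Summing the chain rules for $f$ and $g$ and using $\langle c,\dot z(t)\rangle=0$, I compute
\[
(\varphi\circ z)'(t)=\langle a+b,\dot z(t)\rangle=\langle a+b+c,\dot z(t)\rangle=-\|\dot z(t)\|^{2}=-\dist^{2}(0;G(z(t))).
\]
Local Lipschitz continuity of $\varphi=f+g$ makes $\varphi\circ z$ absolutely continuous, so integrating this identity on $[0,t]$ yields \eqref{eqn:lyap2}.

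For property~\ref{ass:b2hat} of Assumption~\ref{ass:bhat}, assume $z(0)$ is not composite critical, i.e., $0\notin G(z(0))$. I will show $\dist(0;G(z(\tau)))>0$ for all $\tau$ in some interval $[0,T]$, which together with \eqref{eqn:lyap2} gives strict descent of $\varphi\circ z$ on $[0,T]$. Suppose not: then there exist $\tau_n\searrow 0$ and $w_n\in G(z(\tau_n))$ with $w_n\to 0$. Writing $w_n=-a_n-b_n-c_n$ as above, local Lipschitzness of $f$ and $g$ forces $\{a_n\},\{b_n\}$ to be bounded, hence $\{c_n\}$ is bounded as well. Passing to subsequences and invoking outer-semicontinuity of $\partial f$, $\partial g$, and $N_{\cX}$ (the latter from closedness of $\cX$), the limits give $0=-a-b-c$ with $a\in\partial f(z(0))$, $b\in\partial g(z(0))$, $c\in N_{\cX}(z(0))$, contradicting $0\notin G(z(0))$.

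I anticipate no serious obstacle; the only delicate point is the final step, where the sum $G$ of three outer-semicontinuous maps is not automatically outer-semicontinuous. This is handled precisely by exploiting the \emph{local boundedness} of $\partial f$ and $\partial g$ (a consequence of local Lipschitz continuity) to bootstrap boundedness of the normal-cone component $\{c_n\}$, which is exactly what lets the outer-semicontinuity of each summand be applied separately after extracting subsequences.
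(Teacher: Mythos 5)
Your derivation of the pointwise identity and of \eqref{eqn:lyap2} is essentially the paper's: sum the three chain rules, observe that $\dot z(t)$ is orthogonal to $W=W_f+W_g+W_N$ while $G(z(t))\subseteq \dot z(t)+W$, and integrate. (One small slip: since $G(z(t))\subseteq \dot z(t)+W$, the containment gives $\|\dot z(t)\|=\dist(0;\dot z(t)+W)\leq \dist(0;G(z(t)))$; the ``$\geq$'' you display is the trivial direction coming from $\dot z(t)\in G(z(t))$. The conclusion is unaffected.)

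The final step, however, has a genuine gap. To obtain property \ref{ass:b2hat} of Assumption~\ref{ass:bhat} you argue that $\dist(0;G(z(\tau)))>0$ on a whole interval $[0,T]$ by passing to the limit in $w_n=-a_n-b_n-c_n\to 0$ and invoking ``outer-semicontinuity of $N_{\cX}$ \ldots from closedness of $\cX$.'' Closedness of $\cX$ gives outer-semicontinuity of the \emph{limiting} normal cone $N^L_{\cX}$, but your vectors $c_n$ live in the \emph{Clarke} normal cone $N_{\cX}=\cl\conv N^L_{\cX}$, and taking closed convex hulls of an unbounded outer-semicontinuous cone map can destroy outer-semicontinuity: for $\cX\subset\R^2$ equal to the union of the $x$-axis with the closure of the graph of $t\mapsto t^3\sin(1/t)$, one has $N_{\cX}(x_k)=\R^2$ at the crossing points $x_k\to 0$ while $N_{\cX}(0)$ is only the vertical line. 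Nothing in the hypotheses (which assume only that $f$, $g$, $\cX$ admit a chain rule) rules this out, and the paper explicitly flags that ``$G$ is not necessarily outer-semicontinuous'' precisely to avoid this step. The fix is the paper's shorter route: take an \emph{arbitrary} $T>0$; by \eqref{eqn:lyap2} the supremum bound $\sup_{t\in[0,T]}\varphi(z(t))\leq\varphi(z(0))$ holds, and if $\varphi(z(T))=\varphi(z(0))$ then the integral vanishes, so $\|\dot z(t)\|=\dist(0;G(z(t)))=0$ for a.e.\ $t\in[0,T]$, so $z\equiv z(0)$ on $[0,T]$ by absolute continuity, whence $0=\dot z(t)\in G(z(0))$ for a.e.\ $t$, contradicting that $z(0)$ is not composite critical. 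Replacing your last paragraph with this argument completes the proof.
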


We now arrive at the main result of the section.

\begin{cor}\label{cor:last_cor_semi_omin_sink}
Suppose that Assumption \ref{ass:stand_ass2} holds and that $f$, $g$, and $\cX$ are definable in an o-minimal structure. Let  $\{x_{k}\}_{k\geq 1}$ be the iterates produced by the proximal stochastic subgradient method \eqref{eqn:stoc_prox_subgrad}. 
Then almost surely, every limit point of  the iterates $\{x_{k}\}_{k\geq 1}$ is composite critical for the problem \eqref{eqn:comp_prob} and the function values $\{\varphi(x_k)\}_{k\geq 1}$ converge.
\end{cor}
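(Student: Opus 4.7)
The plan is to invoke Theorem~\ref{thm:main_cray_thmhat}. Since Assumption~\ref{ass:stand_ass2} is supplied by hypothesis, everything reduces to verifying the Lyapunov condition, Assumption~\ref{ass:bhat}, under the hypothesis that $f$, $g$, and $\cX$ are definable in a common o-minimal structure. Throughout I will use the fact that any definable object admits a Whitney $C^p$ stratification for every $p\geq 1$.

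For the descent property \ref{ass:bhat}.\ref{ass:b2hat}, I would appeal to Lemma~\ref{lem:chaion_rule_gives_assump2}, whose hypothesis is that each of $f$, $g$, and $\cX$ admits a chain rule. The chain rule for the two functions is exactly Theorem~\ref{thm:chain_strat}. For $\cX$, the argument mirrors that of Theorem~\ref{thm:chain_strat}: take a Whitney $C^1$ stratification of $\cX$, use the standard countable-cover trick to conclude that for any arc $z\colon\R_+\to\cX$ and a.e.~$t$ the velocity $\dot z(t)$ lies in the tangent space $T_{\mathcal{N}}(z(t))$ of the containing stratum $\mathcal{N}$, and invoke the fact that $N_{\cX}(z(t))\subseteq N_{\mathcal{N}}(z(t))$ for a Whitney stratified set (this is Whitney's condition (a) applied to limits of normals along sequences in higher-dimensional strata). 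This forces $\dotp{N_{\cX}(z(t)),\dot z(t)}=0$ for a.e.~$t$, as required.

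The heart of the argument is the weak Sard property \ref{ass:bhat}.\ref{it:densehat}. In the spirit of Lemma~\ref{lem:strat_sard}, I would prove the stronger claim that the set of composite critical values has Lebesgue measure zero. Using the standard refinement machinery in definable geometry, extract a finite partition $\{\mathcal{S}_j\}_{j=1}^N$ of $\cX$ into $C^d$ manifolds such that on each $\mathcal{S}_j$ the restrictions $f|_{\mathcal{S}_j}$ and $g|_{\mathcal{S}_j}$ are $C^d$-smooth, and $\mathcal{S}_j$ is contained in a single stratum of each of the original Whitney stratifications of $\cX$, $\gph f$, and $\gph g$. Fix $x\in \mathcal{S}_j$ composite critical, so that $v+w+n=0$ for some $v\in\partial f(x)$, $w\in\partial g(x)$, $n\in N_{\cX}(x)$. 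The containment~\eqref{eqn:subdiff_inclusion} applied to $f$ and $g$, together with the inclusion $N_{\cX}(x)\subseteq N_{\mathcal{S}_j}(x)$ from the previous paragraph, shows that the orthogonal projection onto $T_{\mathcal{S}_j}(x)$ satisfies
$$P_{T_{\mathcal{S}_j}}v=\nabla_{\mathcal{S}_j}f(x),\qquad P_{T_{\mathcal{S}_j}}w=\nabla_{\mathcal{S}_j}g(x),\qquad P_{T_{\mathcal{S}_j}}n=0.$$
Projecting $v+w+n=0$ onto $T_{\mathcal{S}_j}(x)$ therefore yields $\nabla_{\mathcal{S}_j}\varphi(x)=0$, meaning that $x$ is a classical critical point of the $C^d$ restriction $\varphi|_{\mathcal{S}_j}$. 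Classical Sard's theorem applied stratum-wise, followed by a finite union, forces the set of composite critical values to have measure zero, whence its complement is dense in $\R$.

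The main technical obstacle is the stratification bookkeeping in the weak Sard step: one needs a common definable refinement that simultaneously respects the Whitney stratifications of all three objects while preserving $C^d$-smoothness of $f$ and $g$ along each piece. Existence of such refinements is standard in definable geometry but deserves a careful citation. Once it is in place, the stratum-level inclusion $N_{\cX}(x)\subseteq N_{\mathcal{S}_j}(x)$ and the projection calculation that identifies composite criticality with classical stratum-wise criticality of $\varphi|_{\mathcal{S}_j}$ are mechanical, and the corollary follows by a direct appeal to Theorem~\ref{thm:main_cray_thmhat}.
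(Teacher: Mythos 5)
Your proposal is correct and follows essentially the same route as the paper: reduce to Theorem~\ref{thm:main_cray_thmhat}, obtain the descent property from the chain rule for $f$, $g$, and $\cX$ via Theorem~\ref{thm:chain_strat} and Lemma~\ref{lem:chaion_rule_gives_assump2}, and prove the weak Sard property by passing to a common Whitney $C^d$-stratification compatible with those of $\gph f$, $\gph g$, and $\cX$ (the paper cites \cite[Theorem 4.8]{Dries-Miller96} for this refinement) so that composite criticality becomes classical criticality of the smooth restriction of $\varphi$ to each stratum. Your tangent-space projection identity is just a rephrasing of the paper's inclusion $\partial f(x)+\partial g(x)+N_{\cX}(x)\subset \nabla(\widehat f+\widehat g)(x)+N_{M}(x)$, so the two arguments coincide.
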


\subsection{Comments on boundedness}

Thus far, all of our results have assumed that the subgradient iterates $\{x_k\}$ satisfy $\sup_{k \geq 1} \|x_k\| < \infty$ almost surely. One may enforce this assumption in several ways, most easily by assuming the constraint set $\cX$ is bounded. Beyond boundedness of $\cX$, proper choice of regularizer $g$ may also ensure boundedness of $\{x_k\}$. Indeed, this observation was already made by Duchi-Ruan \cite[Lemma 3.15]{duchi_ruan}. Following their work, let us isolate the following assumption.

\begin{assumption}[Regularizers that induce boundedness]\label{ass:bounded_regularizer}
~

\begin{enumerate}
\item $g$ is convex and $\beta$-coercive, meaning $\lim_{k \rightarrow \infty} g(x)/\|x\|^\beta = \infty.$
\item There exists $\lambda \in (0, 1]$ such that $g(x) \geq g(\lambda x)$ for $x$ with sufficiently large norm.
\end{enumerate}
\end{assumption}

A natural regularizer satisfying this assumption is $\|x\|^{\beta + \epsilon}$ for any $\epsilon > 0$. The following theorem, whose proof is identical to that of 
\cite[Lemma 3.15]{duchi_ruan}, shows that with Assumption~\ref{ass:bounded_regularizer} in place, the stochastic proximal subgradient methods produces bounded iterates. 
\begin{thm}[Boundedness of iterates under coercivity]\label{thm:boundedness_reg}
Suppose that Assumption~\ref{ass:bounded_regularizer} holds and that $\cX = \RR^d$. In addition, suppose there exists $L > 0$ and $\nu < \beta - 1$ such that $\|\zeta(x, \omega)\| \leq L(1 + \|x\|^\nu)$ for all $x \in \RR^d$ and $\omega \in \Omega$. Then $\sup_{k \geq 1} \|x_k\| < \infty$ almost surely.
\end{thm}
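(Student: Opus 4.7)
The plan is to follow Duchi-Ruan~\cite[Lemma~3.15]{duchi_ruan}. The argument is essentially deterministic: it uses only the pointwise bound $\|\zeta(x,\omega)\| \le L(1+\|x\|^{\nu})$ rather than any martingale structure, so it actually yields a sure (not merely almost sure) upper bound on $\sup_{k}\|x_k\|$.

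First I would extract a useful subgradient from the first-order optimality condition of the proximal step. Since $g$ is convex, $x_{k+1}$ is the unique minimizer of the strongly convex map $x\mapsto g(x)+\tfrac{1}{2\alpha_k}\|x-(x_k-\alpha_k\zeta_k)\|^{2}$, so the vector
$$
v_k := \tfrac{1}{\alpha_k}(x_k - x_{k+1}) - \zeta_k
$$
lies in $\partial g(x_{k+1})$. Applying convexity at the origin yields $\langle v_k, x_{k+1}\rangle \ge g(x_{k+1})-g(0)$, while $\beta$-coercivity of $g$ gives constants $c>0$ and $M\ge 0$ with $g(x) \ge c\|x\|^{\beta} - M$ for every $x$. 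Substituting the definition of $v_k$, applying Cauchy-Schwarz to $\langle x_k,x_{k+1}\rangle$ and $\langle \zeta_k,x_{k+1}\rangle$, and dividing by $\|x_{k+1}\|$ (the case $\|x_{k+1}\|\le 1$ is trivially bounded) produces the one-step recursion
$$
\|x_{k+1}\| + \alpha_k c\|x_{k+1}\|^{\beta-1} \;\le\; \|x_k\| + \alpha_k\bigl[L(1+\|x_k\|^{\nu}) + M + g(0)\bigr].
$$

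The conclusion now follows from the strict inequality $\nu<\beta-1$. Choose $R_{*}>0$ large enough that $cR^{\beta-1} > L(1+R^{\nu}) + M + g(0)$ for every $R\ge R_{*}$. If $\|x_k\|\ge R_{*}$ and one assumes $\|x_{k+1}\|\ge\|x_k\|$, then $\|x_{k+1}\|^{\beta-1}\ge\|x_k\|^{\beta-1}$, and substituting into the recursion yields $\alpha_k c\|x_k\|^{\beta-1}\le\alpha_k[L(1+\|x_k\|^{\nu}) + M + g(0)]$, contradicting the choice of $R_{*}$. Hence $\|x_{k+1}\|<\|x_k\|$ whenever $\|x_k\|\ge R_{*}$. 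On the complementary region $\|x_k\|<R_{*}$, the recursion gives $\|x_{k+1}\|\le R_{*}+\alpha_k[L(1+R_{*}^{\nu})+M+g(0)]$, which is bounded uniformly in $k$ because $\{\alpha_k\}$ is bounded. A simple induction then delivers $\sup_{k\ge 0}\|x_k\|\le \max\{\|x_0\|,R_{*}+(\sup_k\alpha_k)[L(1+R_{*}^{\nu})+M+g(0)]\}<\infty$.

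The main conceptual point is that the hypothesis $\nu<\beta-1$ is precisely what is needed to make the damping term $c\|x\|^{\beta-1}$, produced by coercivity, strictly dominate the push term $L\|x\|^{\nu}$, produced by the oracle; handling the coupling between $\|x_k\|$ and $\|x_{k+1}\|$ in the recursion by splitting into large/small regimes is the only mild subtlety. All remaining ingredients (measurability of $T_{\alpha_k}$, nonemptiness of $\partial g$, and the sum rule for convex subgradients) are routine given Assumption~\ref{ass:stand_ass2}. Note that the second item in Assumption~\ref{ass:bounded_regularizer} is not used explicitly, since convexity together with $\beta$-coercivity already delivers the positivity $\langle v_k, x_{k+1}\rangle \ge c\|x_{k+1}\|^{\beta}-M-g(0)$ that drives the estimate.
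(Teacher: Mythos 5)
Your argument is correct. The paper itself supplies no proof of this theorem---it simply asserts that the argument is identical to Lemma~3.15 of Duchi--Ruan---so the comparison is against that template, and your reconstruction follows the same mechanism: extract $v_k\in\partial g(x_{k+1})$ from the prox optimality condition, convert convexity plus coercivity into a one-step recursion in which the damping term $\alpha_k c\|x_{k+1}\|^{\beta-1}$ dominates the drift $\alpha_k L(1+\|x_k\|^{\nu})$ precisely because $\nu<\beta-1$, and close with a large/small-norm dichotomy. The one genuine deviation is your choice of comparison point: you test convexity at $y=0$ and invoke the global lower bound $g\ge c\|\cdot\|^{\beta}-M$ directly, whereas Duchi--Ruan compare against $\lambda x_{k+1}$, which is exactly where the second item of Assumption~\ref{ass:bounded_regularizer} enters; since here $g\colon\R^d\to\R$ is finite-valued (so $g(0)$ is an honest constant), your shortcut is legitimate and, as you note, renders that item superfluous for this particular statement. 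Two implicit normalizations are worth recording to make the induction airtight: one may take $\nu\ge 0$ without loss of generality (for $\nu<0$ the oracle bound degenerates near the origin), which together with $\nu<\beta-1$ forces $\beta>1$, so that $cR^{\beta-1}\to\infty$ and the threshold $R_{*}$ exists; and $M$ should be enlarged if necessary so that $M+g(0)\ge 0$ before absorbing the term $(M+g(0))/\|x_{k+1}\|$ on the event $\|x_{k+1}\|\ge 1$. With these trivial adjustments the bound is indeed deterministic, as you observe.
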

We note that in the special (deterministic) case that $\zeta(x, \omega) \in \partial f(x)$ for all $\omega$, the assumption on $\zeta(x, \omega)$ reduces to $\sup_{\zeta \in \partial f(x)}\|\zeta\| \leq L(1+\|x\|^\nu)$, which stipulates that $g$ grows more quickly than $f$.

\appendix
\section{Proofs for the proximal extension}\label{app}
In this section, we follow the notation of Section~\ref{sec:prox_ext}. Namely, we let $\zeta \colon \RR^d \times \Omega \rightarrow \RR^d$ be the stochastic subgradient oracle and $T_{(\cdot)}(\cdot) : (0, \infty) \times \RR^d \rightarrow \RR^d$ the proximal selection.
Throughout, we let $x_k$ and $\omega_k$ be generated by the proximal stochastic subgradient method \eqref{eqn:stoc_prox_subgrad} and suppose that Assumption~\ref{ass:stand_ass2} holds. 
Let $\cF_k :=\sigma(x_j,\omega_{j-1}: j\leq k)$ be the sigma algebra generated by the history of the algorithm.

Let us now formally define the normal cone constructions of variational analysis. For any point $x\in \cX$, the {\em proximal normal cone} to $\cX$ at $x$ is the set 
$$N^P_{\cX}(x):=\{\lambda v\in\R^d: x\in\proj_{\cX}(x+v),\lambda\geq 0\},$$
where $\proj_{\cX}(\cdot)$ denotes the nearest point map to $\cX$.
The {\em limiting normal cone} to $\cX$ at $x$, denoted  $N^L_\cX(x)$, consists of all vector $v\in \R^d$ such that there exist sequences $x_i\in {\cX}$ and $v_i\in N^P_{\cX}(x_i)$ satisfying $(x_i,v_i)\to (x,v)$.
The {\em Clarke normal cone} to $\cX$ at $x$ is then simply
$$N_{\cX}(x):=\cl \conv N^L_\cX(x).$$

\subsection{Auxiliary lemmas}
In this subsection, we record a few auxiliary lemmas to be used in the sequel.
\begin{lem}\label{lem:prox}
There exists a function $L \colon \RR^d \rightarrow \RR_+$, which is bounded on bounded sets, such that for any $x, v \in \RR^d$, and $\alpha > 0$, we have 
$$
\alpha^{-1} \|x - x_+\| \leq 2\cdot L(x)+ 2\cdot \|v\|,
$$
where we set $x_+ := T_{\alpha}(x - \alpha v)$. 
\end{lem}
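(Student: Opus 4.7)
The plan is to exploit the defining optimality of $x_+$ in the proximal map, using $x$ itself as a competitor. This is the relevant case for applications since the iterates produced by \eqref{eqn:stoc_prox_subgrad} satisfy $x_k\in\cX$ by construction, so we may legitimately substitute $x\in\cX$ into the minimization defining $x_+=T_\alpha(x-\alpha v)$. Doing so yields
$$g(x_+)+\tfrac{1}{2\alpha}\|x_+-(x-\alpha v)\|^2\leq g(x)+\tfrac{1}{2\alpha}\|\alpha v\|^2=g(x)+\tfrac{\alpha}{2}\|v\|^2.$$
Expanding the squared norm on the left-hand side, the term $\tfrac{\alpha}{2}\|v\|^2$ cancels, and after applying Cauchy--Schwarz to the cross term $\langle x_+-x,v\rangle$, we obtain the basic inequality
$$\tfrac{1}{2\alpha}\|x-x_+\|^2\leq \bigl(g(x)-g(x_+)\bigr)+\|x-x_+\|\cdot\|v\|.$$

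The next step is to bound the descent $g(x)-g(x_+)$ using Assumption~\ref{ass:stand_ass2}.\ref{it:mild2}. If $g(x_+)\le g(x)$, this assumption gives directly
$$g(x)-g(x_+)\leq L(x)\,\|x-x_+\|,$$
with $L$ bounded on bounded sets; and if $g(x_+)>g(x)$, the same inequality holds trivially since the left-hand side is negative and the right-hand side nonnegative. In either case, substituting into the previous display yields
$$\tfrac{1}{2\alpha}\|x-x_+\|^2\leq \bigl(L(x)+\|v\|\bigr)\|x-x_+\|.$$

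If $x_+=x$, the claim holds vacuously. Otherwise, we divide through by $\|x-x_+\|$ and multiply by $2$ to conclude
$$\alpha^{-1}\|x-x_+\|\leq 2\,L(x)+2\,\|v\|,$$
as required; the boundedness of $L$ on bounded sets is inherited directly from Assumption~\ref{ass:stand_ass2}.\ref{it:mild2}. I expect no real obstacle: the only delicate point is the implicit requirement that $x\in\cX$ be an admissible competitor in the proximal subproblem, which is built into the way this lemma is invoked downstream (the iterates $x_k$ of \eqref{eqn:stoc_prox_subgrad} are produced by a prior proximal step and thus lie in $\cX$).
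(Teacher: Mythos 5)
Your proof is correct and follows essentially the same route as the paper's: compare $x_+$ against the competitor $x$ in the proximal subproblem, expand the square, apply Cauchy--Schwarz to the cross term, invoke Assumption~\ref{ass:stand_ass2}.\ref{it:mild2} to bound $g(x)-g(x_+)$, and divide by $\|x-x_+\|$. Your remark that $x$ must lie in $\cX$ to be an admissible competitor is a valid caveat that the paper's terser argument leaves implicit, and as you note it is harmless since every downstream invocation takes $x=x_k\in\cX$ (or $z_k\in\cX$).
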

\begin{proof}
Let $L(\cdot)$ be the function from property \ref{ass:stand_ass2}.\ref{it:mild2}.
From the definition of the proximal map, we deduce
\begin{align*}
& \tfrac{1}{2\alpha } \|x_+ - x\|^2\leq  g(x) - g(x_+) -\dotp{v, x_{+} - x}\leq L(x)\cdot \|x_+ - x\|+ \|v\|\cdot \|x_{+} - x\|.
\end{align*}
Dividing both sides by $\tfrac{1}{2}\|x_+ - x\|$ yields the result.
\end{proof}

\begin{lem}\label{lem:bounded_noise}
Let $\{z_k\}_{k\geq 1}$ be a bounded sequence in $\R^d$ and let $\{\beta_k\}_{k\geq 1}$ be a nonnegative sequence satisfying
$
\sum_{k=1}^\infty \beta_k^2 < \infty.
$
Then almost surely over $\omega\sim P$, we have $\beta_k \zeta(z_k, \omega)\to 0$.
\end{lem}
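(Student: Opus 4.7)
The plan is to apply a second-moment argument combined with Tonelli's theorem. The key observation is that property~\ref{it:martingale_diff} of Assumption~\ref{ass:stand_ass2} provides the pointwise second-moment bound $\EE_\omega[\|\zeta(x,\omega)\|^2] \leq p(x)$, and since $\{z_k\}$ is bounded and $p$ is bounded on bounded sets (also part of the same assumption), there exists a uniform constant $C > 0$ such that $\EE_\omega[\|\zeta(z_k,\omega)\|^2] \leq p(z_k) \leq C$ for every index $k \geq 1$.

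Applying Tonelli's theorem to the nonnegative series, we would then obtain
$$\EE_\omega\left[\sum_{k=1}^{\infty} \beta_k^2\, \|\zeta(z_k,\omega)\|^2 \right] = \sum_{k=1}^{\infty} \beta_k^2\, \EE_\omega\left[\|\zeta(z_k,\omega)\|^2\right] \leq C\sum_{k=1}^{\infty} \beta_k^2 < \infty,$$
where the final inequality uses the hypothesis $\sum_k \beta_k^2 < \infty$. Hence the nonnegative random variable $\sum_{k=1}^{\infty} \beta_k^2\, \|\zeta(z_k,\omega)\|^2$ is finite almost surely.

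From the almost-sure finiteness of the sum, the general term $\beta_k^2\, \|\zeta(z_k,\omega)\|^2$ must converge to zero almost surely. Taking square roots then yields $\beta_k\, \|\zeta(z_k,\omega)\| \to 0$ almost surely, which is equivalent to the desired conclusion $\beta_k\, \zeta(z_k,\omega) \to 0$ almost surely.

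There is no substantive obstacle; the only decision point is realizing that the second-moment bound in property~\ref{it:martingale_diff}, rather than the first-moment/supremum bound in property~\ref{it:loc_integrability}, is the appropriate tool. The square-summability hypothesis $\sum_k \beta_k^2 < \infty$ pairs naturally with $\EE_\omega[\|\zeta(z_k,\omega)\|^2]$ via Tonelli, whereas a direct appeal to~\ref{it:loc_integrability} would require passing to a convergent subsequence of $\{z_k\}$ and would not by itself deliver a uniform-in-$k$ almost-sure conclusion.
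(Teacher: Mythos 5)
Your proposal is correct and is essentially the paper's own argument: the paper also bounds $\sum_k \EE[\beta_k^2\|\zeta(z_k,\cdot)\|^2] \leq \sum_k \beta_k^2 p(z_k) < \infty$ using boundedness of $\{p(z_k)\}$ and then invokes a standard measure-theoretic fact (summable expectations of nonnegative random variables force a.s.\ convergence to zero), which is exactly your Tonelli step spelled out. No gaps.
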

\begin{proof}
Notice that because $\{z_k\}_{k\geq 1}$ is bounded, it follows that $\{p(z_k)\}$ is bounded. Now consider the random variable $X_k = \beta_k^2 \|\zeta(z_k, \cdot)\|^2$. Due to the estimate 
$$
\sum_{k=1}^\infty \EE\left[X_k\right] \leq \sum_{k=1}^\infty \beta_k^2p(z_k) <\infty,
$$
standard results in measure theory (e.g.,~\cite[Exercise 1.5.5]{tao2011introduction}) imply that $X_k \rightarrow 0$ almost surely.
\end{proof}

\begin{lem}\label{lem:boundedness_2}
Almost surely, we have $\alpha_k\|\zeta(x_k, \omega_k)\| \to 0$ as $k \rightarrow \infty$. 
\end{lem}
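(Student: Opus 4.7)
My plan is to establish the stronger claim that $\sum_{k=1}^\infty \alpha_k^2\|\zeta(x_k,\omega_k)\|^2<\infty$ almost surely; since any summable nonnegative sequence has terms tending to zero, this immediately yields $\alpha_k\|\zeta(x_k,\omega_k)\|\to 0$ almost surely. A direct appeal to Lemma~\ref{lem:bounded_noise} is not available, because that lemma requires a deterministic bounded sequence, whereas here $x_k$ is random and depends on the same underlying randomness that drives the oracle evaluation $\zeta(x_k,\omega_k)$. This entanglement is the main obstacle, and I would address it by a standard localization via stopping times.

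Concretely, I would fix $M>0$ and introduce the stopping time $\tau_M := \inf\{k\geq 1 : \|x_k\| > M\}$. The event $\{\tau_M > k\}$ lies in $\mathcal{F}_k$, and on this event $\|x_k\|\leq M$, so the function $p$ in Assumption~\ref{ass:stand_ass2}.\ref{it:martingale_diff}, being bounded on bounded sets, satisfies $p(x_k)\leq c_M:=\sup_{\|x\|\leq M} p(x)<\infty$. Since $\omega_k$ is drawn independently of $\mathcal{F}_k$ and $x_k$ is $\mathcal{F}_k$-measurable, Assumption~\ref{ass:stand_ass2}.\ref{it:martingale_diff} yields the conditional bound $\EE[\|\zeta(x_k,\omega_k)\|^2\mid\mathcal{F}_k]\leq p(x_k)$. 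Multiplying by $\alpha_k^2\mathbf{1}_{\{\tau_M>k\}}$, taking expectations, summing over $k$, and using $\sum_k\alpha_k^2<\infty$ gives
$$\EE\left[\sum_{k=1}^\infty \alpha_k^2\|\zeta(x_k,\omega_k)\|^2\,\mathbf{1}_{\{\tau_M>k\}}\right]\;\leq\; c_M\sum_{k=1}^\infty \alpha_k^2\;<\;\infty,$$
so the truncated sum is finite almost surely. On the event $\{\tau_M=\infty\}$ this truncated sum agrees pointwise with $\sum_k \alpha_k^2\|\zeta(x_k,\omega_k)\|^2$.

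To finish, I would invoke Assumption~\ref{ass:stand_ass2}.\ref{it:mild4}, which states that the iterates $\{x_k\}$ are bounded almost surely; hence $P\!\left(\bigcup_{M\in\mathbb{N}}\{\tau_M=\infty\}\right)=1$. Combined with the previous display, this forces $\sum_k\alpha_k^2\|\zeta(x_k,\omega_k)\|^2<\infty$ almost surely, and therefore $\alpha_k\|\zeta(x_k,\omega_k)\|\to 0$ almost surely, as required. The only technical point requiring care is the conditioning/measurability step, which the stopping-time truncation handles cleanly by turning the a.s. boundedness of $\{x_k\}$ into an event-by-event uniform bound on $p(x_k)$.
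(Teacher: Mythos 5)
Your argument is correct, but it takes a genuinely different route from the paper's. The paper first centers the oracle output, writing $\zeta(x_k,\omega_k)$ as its conditional mean plus a martingale difference; it then applies the $L^2$ martingale convergence theorem (via the predictable compensator $\sum_i \alpha_i^2\,\EE[\|\xi_i\|^2\mid\cF_i]$, which is a.s.\ finite because $\{p(x_k)\}$ is a.s.\ bounded) to conclude that $\alpha_k\|\zeta(x_k,\omega_k)-\EE[\zeta(x_k,\omega_k)\mid\cF_k]\|\to 0$, and handles the mean part separately through $\alpha_k\sqrt{p(x_k)}\to 0$. You instead prove the stronger statement $\sum_k\alpha_k^2\|\zeta(x_k,\omega_k)\|^2<\infty$ a.s.\ directly, using a stopping-time localization to convert the almost-sure boundedness of $\{x_k\}$ into a deterministic bound $c_M$ on $p(x_k)$ before the stopped time, so that Tonelli plus $\sum_k\alpha_k^2<\infty$ closes the argument. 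Your route is more elementary --- it needs no martingale convergence theorem at all, only first-moment bounds and the observation that a summable nonnegative series has vanishing terms --- and it makes explicit the measure-theoretic point (the entanglement of $x_k$ with the driving randomness) that the paper's proof absorbs implicitly into the ``a.s.\ finite compensator'' form of the martingale theorem. What the paper's approach buys in exchange is reusability: the same centering-plus-compensator template is what the paper needs anyway for Lemma~\ref{claim:summable}, where one must show convergence of the weighted noise \emph{series} $\sum_i\alpha_i\xi_i$ (not merely that its terms vanish), a conclusion your second-moment summability alone would not deliver. Both proofs are valid for the statement at hand.
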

\begin{proof}
From the variance bound, $\EE\left[ \|X - \EE\left[X\right]\|^2\right] \leq \EE\left[\|X\|^2\right]$, and Assumption~\ref{ass:stand_ass2}, we have $$\EE\left[\|\zeta(x_k, \omega_{k}) - \EE\left[\zeta(x_k, \omega_{k}) \mid  \cF_k \right]\|^2\mid  \cF_k \right] \leq \EE\left[\|\zeta(x_k, \omega_{k})\|^2\mid  \cF_k \right] \leq p(x_k).$$
Therefore, the following infinite sum is a.s. finite:
		\begin{align*}
		 \sum_{i=1}^\infty \alpha_i^2 \EE\left[\|\zeta(x_i, \omega_{i}) - \EE\left[\zeta(x_i, \omega_{i})\mid  \cF_i \right]\|^2\mid  \cF_i \right]\leq  \sum_{i=1}^\infty \alpha_i^2 p(x_i) < \infty. 
		\end{align*}	
	Define the $L^2$ martingale $X_k = \sum_{i=1}^k \alpha_i (\zeta(x_i, \omega_i) - \EE\left[\zeta(x_i, \omega_i) \mid \cF_i\right])$. Thus, the limit $\dotp{X}_{\infty}$ of the predictable compensator 
	$$
	\dotp{X}_k := \sum_{i=1}^k \alpha_i^2 \EE\left[\|\zeta(x_i, \omega_{i}) - \EE\left[\zeta(x_i, \omega_{i})\mid  \cF_i \right]\|^2\mid  \cF_i \right],
	$$
	exists. Applying \cite[Theorem 5.3.33(a)]{dembo2016probability}, we deduce that almost surely $X_k$ converges to a finite limit, which directly implies $\alpha_k\|\zeta(x_k, \omega_{k}) - \EE\left[\zeta(x_k, \omega_{k})\mid  \cF_k \right]\| \to 0$ almost surely as $k \rightarrow \infty$. Therefore, since $\alpha_k \|\EE\left[\zeta(x_k, \omega_{k})\mid  \cF_k \right] \|\leq \alpha_k\EE\left[ \|\zeta(x_k, \omega_{k})\|\mid  \cF_k \right] \leq \alpha_k \sqrt{p(x_k)} \rightarrow 0$ almost surely as $k \rightarrow 0$, it follows that $\alpha_k\|\zeta(x_k, \omega_{k})\| \rightarrow 0$ almost surely as $k \rightarrow 0$.
\end{proof}

\subsection{Proof Theorem~\ref{thm:main_cray_thmhat}}

 In addition to Assumption~\ref{ass:stand_ass2}, let us now suppose that Assumption~\ref{ass:bhat} holds. 
 Define the set-valued map $G\colon Q\rightrightarrows\R^d$  by
 $G=-\partial f-\partial g-N_{\cX}$.  
 We aim to apply Theorem~\ref{thm:main_cray_thm_general}, which would immediately imply the validity of Theorem~\ref{thm:main_cray_thmhat}. To this end, notice that Assumption~\ref{ass:bhat} is exactly Assumption~\ref{ass:b_general} for our map $G$. Thus we must only verify that Assumption~\ref{ass:duchi_assumption} holds almost surely. Note that properties \ref{ass:duchi_assumption}.\ref{item:iterates_in_Q} and \ref{ass:duchi_assumption}.\ref{item:steps_duchi} hold vacuously. Thus, we must only show that  \ref{ass:duchi_assumption}.\ref{item:gradients_bounded}, \ref{ass:duchi_assumption}.\ref{item:noise}, and~\ref{ass:duchi_assumption}.\ref{item:average_OSC} hold. The argument we present is essentially the same as in \cite[Section 3.2.2]{duchi_ruan} .
 
 For each index $k$, define the set-valued map 
   $$
   G_{k}(x) := -\partial f(x) - \alpha_k^{-1} \cdot \EE_{\omega}\left[x - \alpha_k \zeta(x, \omega) - T_{\alpha_k}(x - \alpha_k\zeta(x, \omega))\right]
   $$
   Note that $G_k$ is a deterministic map, with $k$ only signifying the dependence on the deterministic sequence $\alpha_k$. Define now the noise sequence 
$$
\xi_{k} := \tfrac{1}{\alpha_k}\left[T_{\alpha_k}(x_k - \alpha_k\zeta(x_k, \omega_k)) -x_k\right]-\tfrac{1}{\alpha_k}\left[\EE_{\omega}\left[T_{\alpha_k}(x_k - \alpha_k \zeta(x_k, \omega))-x_k\right])\right].
$$
Let us now write the proximal stochastic subgradient method in the form \eqref{eqn:Euler_rec}.
 \begin{lem}[Recursion relation]
 For all $k \geq 0$, we have
$$
x_{k+1} = x_k + \alpha_k\left[ y_k + \xi_k\right] \qquad \text{for some }y_k \in G_k(x_k).
$$
 \end{lem}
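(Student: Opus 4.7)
The statement is essentially an algebraic rearrangement of the definitions, and the plan is to exhibit $y_k$ explicitly and then verify the two things required: that the recursion $x_{k+1}=x_k+\alpha_k(y_k+\xi_k)$ holds and that $y_k\in G_k(x_k)$. The natural candidate is the ``deterministic part'' of the (scaled) increment, namely
\[
y_k := \alpha_k^{-1}\cdot \EE_{\omega}\bigl[T_{\alpha_k}(x_k-\alpha_k\zeta(x_k,\omega))-x_k\bigr],
\]
which is the conditional expectation of $(x_{k+1}-x_k)/\alpha_k$ given $\cF_k$ (so $\xi_k$ is by construction the conditional mean-zero residual).

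With this choice, the recursion is immediate: the definition of $\xi_k$ rearranges to
\[
\alpha_k^{-1}(x_{k+1}-x_k)\;=\;\alpha_k^{-1}\bigl[T_{\alpha_k}(x_k-\alpha_k\zeta(x_k,\omega_k))-x_k\bigr]\;=\;y_k+\xi_k,
\]
using the update rule \eqref{eqn:stoc_prox_subgrad}. It remains to verify $y_k\in G_k(x_k)$. Rewriting the second term in the definition of $G_k$ by splitting $x-\alpha_k\zeta(x,\omega)-T_{\alpha_k}(\cdot)$ into $\alpha_k\zeta(x,\omega)$ plus $x-T_{\alpha_k}(\cdot)$, we obtain
\[
G_k(x_k)\;=\;-\partial f(x_k)\;-\;\EE_{\omega}[\zeta(x_k,\omega)]\;+\;y_k.
\]
Hence $y_k\in G_k(x_k)$ is equivalent to $\EE_{\omega}[\zeta(x_k,\omega)]\in\partial f(x_k)$, which is precisely the stochastic oracle hypothesis in Assumption~\ref{ass:stand_ass2}.\ref{it:martingale_diff}.

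The only non-trivial ingredient, and the place that must be handled with a bit of care, is ensuring the expectations defining $y_k$ and the second term of $G_k(x_k)$ are actually well-defined (the rest is purely formal). For this I would appeal to Lemma~\ref{lem:prox}, which gives the pointwise bound $\alpha_k^{-1}\|x_k-T_{\alpha_k}(x_k-\alpha_k\zeta(x_k,\omega))\|\leq 2L(x_k)+2\|\zeta(x_k,\omega)\|$; combined with the second-moment bound $\EE_{\omega}[\|\zeta(x_k,\omega)\|^2]\leq p(x_k)$ from Assumption~\ref{ass:stand_ass2}.\ref{it:martingale_diff}, this yields integrability of the relevant random variables conditional on $\cF_k$ (where $x_k$ is known), so the conditional expectations are finite and the identification $y_k\in G_k(x_k)$ makes sense. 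No part of the argument requires more than these bookkeeping steps, so I expect no real obstacle beyond carefully tracking the conditional-expectation/integrability justification.
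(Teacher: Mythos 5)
Your proposal is correct and is essentially the paper's own argument: both identify $y_k=\alpha_k^{-1}\EE_{\omega}\bigl[T_{\alpha_k}(x_k-\alpha_k\zeta(x_k,\omega))-x_k\bigr]$ and reduce the membership $y_k\in G_k(x_k)$ to the oracle hypothesis $\EE_{\omega}[\zeta(x_k,\omega)]\in\partial f(x_k)$, with your integrability remark (via Lemma~\ref{lem:prox}) being a harmless extra. Note only a self-cancelling sign slip: splitting $x-\alpha_k\zeta-T_{\alpha_k}(\cdot)$ gives $-\alpha_k\zeta$ plus $x-T_{\alpha_k}(\cdot)$, so the correct identity is $G_k(x_k)=-\partial f(x_k)+\EE_{\omega}[\zeta(x_k,\omega)]+y_k$, from which your stated equivalence does follow.
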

\begin{proof}
Notice that for every index $k\geq 0$, we have
\begin{align*}
 \tfrac{1}{\alpha_k} (x_k - x_{k+1}) &= \tfrac{1}{\alpha_k} \left[x_k -  T_{\alpha_k }(x_k - \alpha_k\zeta(x_k, \omega_k))\right]\\
&= \EE_{\omega}\left[\zeta(x_k, \omega)\right] +\tfrac{1}{\alpha_k} \EE_{\omega}\left[x_k - \alpha_k \zeta(x_k, \omega) - T_{\alpha_k}(x_k - \alpha_k\zeta(x_k, \omega))\right] \\
&\hspace{20pt} + \tfrac{1}{\alpha_k}\left[\EE_{\omega}\left[T_{\alpha_k}(x_k - \alpha_k \zeta(x_k, \omega))\right]-T_{\alpha_k}(x_k - \alpha_k\zeta(x_k, \omega_k))\right]\\
&\in  -G_k(x_k) - \xi_k,
\end{align*}
as desired.
\end{proof}

The following lemma shows that \ref{ass:duchi_assumption}.\ref{item:noise} holds almost surely.
\begin{lem}[Weighted noise sequence] \label{claim:summable}
The limit $\displaystyle\lim_{n \rightarrow \infty} \sum_{i=1}^n \alpha_i \xi_i$ exists almost surely.
\end{lem}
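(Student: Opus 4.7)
The plan is to show that $\{\alpha_k \xi_k\}$ is a square-integrable martingale difference sequence whose predictable compensator has an almost sure finite limit, and then invoke the same $L^2$ martingale convergence theorem that already appears in the proof of Lemma~\ref{lem:boundedness_2}.

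First I would verify the martingale difference property. Since $x_k$ is $\cF_k$-measurable and $\omega_k$ is drawn independently of $\cF_k$, the second term in the definition of $\xi_k$ equals $\EE[\alpha_k^{-1}(T_{\alpha_k}(x_k - \alpha_k\zeta(x_k,\omega_k)) - x_k)\mid \cF_k]$, so $\EE[\xi_k \mid \cF_k] = 0$. Writing $U_k := \alpha_k^{-1}(T_{\alpha_k}(x_k - \alpha_k\zeta(x_k,\omega_k)) - x_k)$, we have $\xi_k = U_k - \EE[U_k\mid \cF_k]$, and the standard variance bound gives $\EE[\|\xi_k\|^2 \mid \cF_k] \leq \EE[\|U_k\|^2 \mid \cF_k]$.

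Next I would control $\EE[\|U_k\|^2\mid \cF_k]$ pointwise via Lemma~\ref{lem:prox}. Applied with $x = x_k$, $v = \zeta(x_k,\omega_k)$, and $\alpha = \alpha_k$, that lemma yields $\|U_k\| \leq 2L(x_k) + 2\|\zeta(x_k,\omega_k)\|$, and hence
$$
\EE[\|\xi_k\|^2 \mid \cF_k] \leq 8L(x_k)^2 + 8\,\EE[\|\zeta(x_k,\omega_k)\|^2 \mid \cF_k] \leq 8L(x_k)^2 + 8 p(x_k),
$$
where the final inequality uses Assumption~\ref{ass:stand_ass2}.\ref{it:martingale_diff}. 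Because $\{x_k\}$ is almost surely bounded by Assumption~\ref{ass:stand_ass2}.\ref{it:mild4} and both $L(\cdot)$ and $p(\cdot)$ are bounded on bounded sets, the quantity $L(x_k)^2 + p(x_k)$ is almost surely uniformly bounded in $k$. Combined with $\sum_k \alpha_k^2 < \infty$, this gives $\sum_{k=1}^\infty \alpha_k^2 \EE[\|\xi_k\|^2 \mid \cF_k] < \infty$ almost surely.

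Finally, form the $L^2$ martingale $X_n := \sum_{i=1}^n \alpha_i \xi_i$; its predictable compensator $\langle X\rangle_n = \sum_{i=1}^n \alpha_i^2 \EE[\|\xi_i\|^2 \mid \cF_i]$ then has an almost surely finite limit. Applying \cite[Theorem 5.3.33(a)]{dembo2016probability} concludes that $X_n$ converges almost surely to a finite limit, exactly as in the proofs of Lemma~\ref{lem:boundedness_2} and the corresponding step in Section~\ref{section:subgradient}. The only mild obstacle is arranging the conditional variance bound in a form that is measurable with respect to $\cF_k$; this is precisely why Lemma~\ref{lem:prox} is stated as a deterministic pointwise bound that survives the conditional expectation intact.
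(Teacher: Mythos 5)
Your proposal is correct and follows essentially the same route as the paper: the variance bound $\EE[\|X-\EE[X]\|^2]\leq \EE[\|X\|^2]$, the deterministic bound from Lemma~\ref{lem:prox}, almost sure boundedness of $L(x_k)$ and $p(x_k)$ via boundedness of the iterates, and the $L^2$ martingale convergence theorem applied to the predictable compensator. (Your constant $8$ in place of the paper's $4$ is in fact the correct consequence of $(2a+2b)^2\leq 8a^2+8b^2$, and the discrepancy is immaterial to the argument.)
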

\begin{proof}
We first prove that $\{\alpha_k\xi_k\}$ is an $L_2$ martingale difference sequence, meaning that for all $k$, we have
$$
\EE\left[\alpha_k \xi_k \mid \cF_k\right]  = 0 \qquad\text{and} \qquad  \sum_{k=1}^\infty \alpha_k^2 \EE\left[ \|\xi_k\|^2 \mid \cF_k \right] < \infty.
$$
Clearly, $\xi_k$ has zero mean conditioned on the past, and so we need only focus on the second property.
By the variance bound, $\EE\left[ \|X - \EE\left[X\right]\|^2\right] \leq \EE\left[\|X\|^2\right]$, and Lemma~\ref{lem:prox}, we have
\begin{align*}
\EE\left[ \|\xi_k\|^2 \mid \cF_k \right] &\leq \frac{1}{\alpha_k^2}\EE\left[\left\|T_{\alpha_k}(x_k - \alpha_k\zeta(x_k, \omega_k)) - x_k\right\|^2 \mid \cF_k\right] \\%= 
&\leq 4\cdot L(x_k)^2+  4\cdot\EE\left[ \|\zeta(x_k, \omega_k)\|^2 \mid \cF_k\right].
\end{align*}
Notice that because $\{x_k\}$ is bounded a.s., it follows that $\{L(x_k)\}$ and $\{p(x_k)\}$ are bounded a.s. Therefore, because 
$$
\sum_{k=1}^\infty \alpha_k^2\EE\left[\|\zeta(x_k, \omega_k)\|^2 \mid \cF_k\right] \leq \sum_{k=1}^\infty \alpha_k^2p(x_k) <\infty,
$$
it follows that 
$
\sum_{k=1}^\infty \alpha_k^2\EE\left[\|\xi_k\|^2 \mid \cF_k\right] < \infty,
$ almost surely,
as desired. 

Now, define the $L^2$ martingale $X_k = \sum_{i=1}^k \alpha_i \xi_{i}$. Thus, the limit $\dotp{X}_{\infty}$ of the predictable compensator 
	$$
	\dotp{X}_k := \sum_{i=1}^k \alpha_i^2 \EE\left[\|\xi_i\|^2\mid  \cF_i \right],
	$$
	exists. Applying \cite[Theorem 5.3.33(a)]{dembo2016probability}, we deduce that almost surely $X_k$ converges to a finite limit, which completes the proof of the claim.
\end{proof}

Now we turn our attention to \ref{ass:duchi_assumption}.\ref{item:gradients_bounded}.
\begin{lem}
Almost surely, the sequence $\{y_k\}$ is bounded. 
\end{lem}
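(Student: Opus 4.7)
The plan is to unpack the definition of $G_k$, express $y_k$ explicitly in terms of the stochastic oracle $\zeta$ and the proximal selection $T_{\alpha_k}$, and then bound the two resulting pieces separately by quantities that are bounded on bounded sets. Since the iterates $\{x_k\}$ are bounded almost surely by Assumption~\ref{ass:stand_ass2}.\ref{it:mild4}, all such quantities become uniformly bounded a.s.

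More concretely, from the recursion relation together with the definition of $\xi_k$, one reads off
$$y_k = -\EE_{\omega}\left[\zeta(x_k,\omega)\right] \;-\; \tfrac{1}{\alpha_k}\,\EE_{\omega}\!\left[x_k - \alpha_k\zeta(x_k,\omega) - T_{\alpha_k}(x_k-\alpha_k\zeta(x_k,\omega))\right].$$
The first term is controlled by Jensen's inequality and Assumption~\ref{ass:stand_ass2}.\ref{it:martingale_diff}:
$$\|\EE_{\omega}\zeta(x_k,\omega)\| \le \EE_{\omega}\|\zeta(x_k,\omega)\| \le \sqrt{\EE_{\omega}\|\zeta(x_k,\omega)\|^2}\le \sqrt{p(x_k)}.$$
For the second term, apply Lemma~\ref{lem:prox} with $x=x_k$, $v=\zeta(x_k,\omega)$, and $\alpha=\alpha_k$, which gives $\alpha_k^{-1}\|x_k - T_{\alpha_k}(x_k-\alpha_k\zeta(x_k,\omega))\|\le 2L(x_k)+2\|\zeta(x_k,\omega)\|$. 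Adding $\|\zeta(x_k,\omega)\|$ from the middle summand and taking expectation yields a bound of the form $2L(x_k)+3\sqrt{p(x_k)}$.

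Combining the two estimates produces a uniform bound
$$\|y_k\| \;\le\; 2L(x_k) + 4\sqrt{p(x_k)}.$$
Since $\{x_k\}$ is bounded almost surely, and both $L(\cdot)$ (from Assumption~\ref{ass:stand_ass2}.\ref{it:mild2}) and $p(\cdot)$ (from Assumption~\ref{ass:stand_ass2}.\ref{it:martingale_diff}) are bounded on bounded sets, the right-hand side is bounded a.s., as required.

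No step here is a real obstacle; the only mild subtlety is ensuring the extra $\alpha_k\zeta(x_k,\omega)$ inside the proximal argument is absorbed cleanly into the $\|\zeta\|$ term rather than being estimated separately, which is what makes Lemma~\ref{lem:prox} the right hammer instead of a bare nonexpansiveness estimate (which would fail since $g$ is not assumed convex).
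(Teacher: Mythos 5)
Your proposal is correct and follows essentially the same route as the paper: decompose $y_k$ into the subgradient part and the averaged proximal-displacement part, bound the latter via Lemma~\ref{lem:prox}, the triangle inequality, and Jensen's inequality to get a bound of the form $2L(x_k)+3\sqrt{p(x_k)}$, and conclude from a.s.\ boundedness of $\{x_k\}$ together with $L(\cdot)$ and $p(\cdot)$ being bounded on bounded sets. The only cosmetic difference is that you bound the first term by $\sqrt{p(x_k)}$ via the second-moment assumption, while the paper bounds the subgradients $\partial f(x_k)$ directly via local Lipschitz continuity of $f$; both are valid.
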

\begin{proof}
Because the sequence $\{x_k\}$ is almost surely bounded and $g$ is locally Lipschitz, clearly we have
$$
\sup \left\{ \|v\| : v\in \bigcup_{k \geq 1} \partial f(x_k)\right\} < \infty,
$$
almost surely. Thus, we need only show that 
$$
\sup_{k \geq 1} \left\{\left\|\frac{1}{\alpha_k}  \EE_{\omega}\left[x_k - \alpha_k \zeta(x_k, \omega) - T_{\alpha_k}(x_k - \alpha_k\zeta(x_k, \omega))\right] \right\|\right\} < \infty,
$$
almost surely.  To this end, by the triangle inequality and Lemma~\ref{lem:prox}, we have for any fixed $\omega \in \Omega$ the bound
\begin{align*}
\left\|\tfrac{1}{\alpha_k}\left[x_k - T_{\alpha_k}(x_k - \alpha_k\zeta(x_k, \omega))\right] \right\| \leq 2\cdot L(x_k) + 2\cdot \|\zeta(x_k, \omega)\|
\end{align*}
Therefore, by Jensen's inequality, we have that
\begin{align*}
&\left\|\tfrac{1}{\alpha_k}  \EE_{\omega}\left[x_k - \alpha_k \zeta(x_k, \omega) - T_{\alpha_k}(x_k - \alpha_k\zeta(x_k, \omega))\right] \right\| \\
&\leq 2\cdot L(x_k)  + 3\cdot\EE_{\omega} \left[ \|\zeta(x_k, \omega)\| \right] \\
&\leq 2\cdot L(x_k) + 3\cdot \sqrt{p(x_k)},
\end{align*}
which is almost surely bounded for all $k$. Taking the supremum yields the result.
\end{proof}

As the last step, we verify Item~\ref{ass:duchi_assumption}.\ref{item:average_OSC}.
\begin{lem}
Item~\ref{item:average_OSC} of Assumption~\ref{ass:duchi_assumption} is true.
\end{lem}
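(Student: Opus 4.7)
The plan is to prove the stronger pointwise statement $\dist(y_{k_j}, G(\bar x)) \to 0$ almost surely, from which the claim about Cesaro averages follows trivially since $G(\bar x) = -\partial f(\bar x) - \partial g(\bar x) - N_{\cX}(\bar x)$ is closed and convex. Decompose
$$y_{k_j} = y_{k_j}^f + y_{k_j}^{g,\cX}, \qquad y_{k_j}^f \in -\partial f(x_{k_j}),$$
$$y_{k_j}^{g,\cX} = -\EE_{\omega}\!\left[\alpha_{k_j}^{-1}\bigl(x_{k_j} - \alpha_{k_j}\zeta(x_{k_j},\omega) - T_{\alpha_{k_j}}(x_{k_j}-\alpha_{k_j}\zeta(x_{k_j},\omega))\bigr)\right].$$
By the standard triangle-inequality bound on distance to a Minkowski sum, it suffices to show $\dist(y_{k_j}^f,-\partial f(\bar x))\to 0$ and $\dist(y_{k_j}^{g,\cX},-\partial g(\bar x)-N_{\cX}(\bar x))\to 0$.

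The first bound is immediate: $\partial f$ is outer-semicontinuous, convex-valued, and locally bounded (since $f$ is locally Lipschitz), so for every $\varepsilon>0$ we have $\partial f(x_{k_j})\subseteq \partial f(\bar x) + \varepsilon\mathcal{B}$ for all sufficiently large $j$, giving $\dist(y_{k_j}^f,-\partial f(\bar x))\to 0$.

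For the second bound, set $x_+^\omega := T_{\alpha_{k_j}}(x_{k_j}-\alpha_{k_j}\zeta(x_{k_j},\omega))$ and $v_j^\omega := \alpha_{k_j}^{-1}(x_{k_j}-\alpha_{k_j}\zeta(x_{k_j},\omega) - x_+^\omega)$. First-order optimality for the proximal subproblem defining $x_+^\omega$ gives the key inclusion
$$v_j^\omega \in \partial g(x_+^\omega) + N_{\cX}(x_+^\omega).$$
Lemma~\ref{lem:prox} yields $\|x_+^\omega - x_{k_j}\|\leq 2\alpha_{k_j}L(x_{k_j}) + 2\alpha_{k_j}\|\zeta(x_{k_j},\omega)\|$; combined with $\alpha_{k_j}\to 0$, boundedness of $L(x_{k_j})$, and Assumption~\ref{ass:stand_ass2}.\ref{it:loc_integrability} (which forces $\sup_j\|\zeta(x_{k_j},\omega)\|<\infty$ almost surely), this shows $x_+^\omega\to\bar x$ a.s.\ in $\omega$. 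Outer-semicontinuity of $\partial g + N_{\cX}$ (a consequence of outer-semicontinuity plus local boundedness of $\partial g$ together with outer-semicontinuity of the Clarke normal cone) then gives $\dist(v_j^\omega,\partial g(\bar x)+N_{\cX}(\bar x))\to 0$ a.s.\ in $\omega$.

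The main technical point is to push this pointwise convergence through the expectation over $\omega$. Setting $A := \partial g(\bar x)+N_{\cX}(\bar x)$, the function $v\mapsto\dist(v,A)$ is convex and $1$-Lipschitz, so Jensen's inequality gives
$$\dist(-y_{k_j}^{g,\cX},A) = \dist(\EE_\omega[v_j^\omega],A) \leq \EE_\omega[\dist(v_j^\omega,A)].$$
Applying Lemma~\ref{lem:prox} once more yields the envelope $\dist(v_j^\omega,A) \leq \|v_j^\omega\| + \dist(0,A) \leq 2\sup_j L(x_{k_j}) + 2\sup_j\|\zeta(x_{k_j},\omega)\| + \dist(0,A)$, which is integrable in $\omega$ by Assumption~\ref{ass:stand_ass2}.\ref{it:loc_integrability}. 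Dominated convergence then forces $\EE_\omega[\dist(v_j^\omega,A)]\to 0$, completing the proof of $\dist(y_{k_j},G(\bar x))\to 0$. The hardest step is coordinating the a.s.-in-$\omega$ outer-semicontinuity conclusion with the expectation in the definition of $y_{k_j}^{g,\cX}$; the Jensen plus DCT combination is what allows the pointwise convergence to survive integration, and Assumption~\ref{ass:stand_ass2}.\ref{it:loc_integrability} is precisely the hypothesis engineered to supply the required integrable envelope.
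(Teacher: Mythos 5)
Your proof is correct and follows essentially the same route as the paper's: first-order optimality of the proximal subproblem, Lemma~\ref{lem:prox} to control the displacement and supply the integrable envelope, outer semicontinuity to pass to $G(\bar x)$, and Jensen plus dominated convergence (via Assumption~\ref{ass:stand_ass2}.\ref{it:loc_integrability}) to push the convergence through the expectation; proving the pointwise statement $\dist(y_{k_j},G(\bar x))\to 0$ and then averaging is an immaterial reordering of the paper's argument. One small imprecision: the Clarke normal cone $N_{\cX}$ need not be outer semicontinuous in general, so you should (as the paper does) take the optimality conditions to produce \emph{limiting} normals $w^{\cX}_k(\omega)\in N^L_{\cX}(x_+^{\omega})$, note they are bounded because $v_j^{\omega}$ and $w^g_k(\omega)$ are, invoke outer semicontinuity of $N^L_{\cX}$, and finish with the inclusion $N^L_{\cX}\subset N_{\cX}$.
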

\begin{proof}
It will be convenient to prove a more general statement, which is independent of the iterate sequence $\{x_k\}$, and instead only depends on the maps $G_k$. Namely, consider any sequence $\{z_k\} \subseteq \cX$ converging to a point $z\in \cX$ and an arbitrary sequence $w_k^f \in \partial f(z_k)$. Let $\{n_k\}$ be an unbounded increasing sequence of indices. Observe that since $G(z)$ is convex and using Jensen's inequality, we have
\begin{align*}
&\dist\left(\frac{1}{n}\sum_{k=1}^n\left(-w_k^f -  \frac{1}{\alpha_{n_k}} \EE_{\omega} \left[z_k - \alpha_{n_k} \zeta(z_k, \omega) - T_{\alpha_{n_k}}(z_k - \alpha_{n_k}\zeta(z_k, \omega))\right]\right), G(z)\right)\\
& \leq  \frac{1}{n}\sum_{k=1}^n\EE_{\omega}\left[ \dist\left(-w_k^f -  \frac{1}{\alpha_{n_k}} \left[z_k - \alpha_{n_k} \zeta(z_k, \omega) - T_{\alpha_{n_k}}(z_k - \alpha_{n_k}\zeta(z_k, \omega))\right], G(z)\right)\right].
\end{align*}
Our goal is to prove that the right-hand-side tends to zero almost surely, which directly implies validity of  
\ref{ass:duchi_assumption}.\ref{item:average_OSC}

 Our immediate goal is to apply the dominated convergence theorem to each term in the above finite sum to conclude that each term converges to zero. 
 To that end, we must show two properties: for every fixed $\omega$, each term in the sum tends to zero, and that each term is bounded by an integrable function. We now prove both properties.
\begin{claim}\label{subclaim:pointwise}
Almost surely in $\omega\sim P$, we have that
$$
\dist\left(-w_k^f -  \frac{1}{\alpha_{n_k}} \left[z_k - \alpha_{n_k} \zeta(z_k, \omega) - T_{\alpha_{n_k}}(z_k - \alpha_{n_k}\zeta(z_k, \omega))\right], G(z)\right) \rightarrow 0 \quad \text{as $k \rightarrow \infty$}.
$$
\end{claim}
\begin{proof}[Proof of Subclaim~\ref{subclaim:pointwise}]
Optimality conditions \cite[Exercise 10.10]{RW98} of the proximal subproblem imply
$$
 \tfrac{1}{\alpha_{n_k}} \left[z_k - \alpha_{n_k} \zeta(z_k, \omega) - T_{\alpha_{n_k}}(z_k - \alpha_{n_k}\zeta(z_k, \omega))\right]  =  w^g_k(\omega) + w^\cX_k(\omega),
$$
for some $w^g_k(\omega)\in \partial g(T_{\alpha_{n_k}}(z_k -  \alpha_{n_k} \zeta(z_k, \omega)))$ and $w^\cX_k(\omega)\in N_{\cX}^{L}(T_{\alpha_{n_k}}(z_k -  \alpha_{n_k} \zeta(z_k, \omega)))$, and where $N_{\cX}^{L}$ denotes the limiting normal cone. Observe that by continuity and the fact that $\sum_{k=1}^\infty \alpha_{n_k}^2 < \infty$ and $\alpha_{n_k} \zeta(z_k, \omega) \rightarrow 0$ as $k \rightarrow \infty$ a.e. (see Lemma~\ref{lem:bounded_noise}), it follows that 
$$
T_{\alpha_{n_k}}(z_k -  \alpha_{n_k} \zeta(z_k, \omega)) \rightarrow z.
$$
 Indeed, setting $z_k^+ = T_{\alpha_{n_k}}(z_k -  \alpha_{n_k} \zeta(z_k, \omega))$, we have that by Lemma~\ref{lem:prox},
$$
\|z_k - z_{k}^+\| \leq 2\alpha_{n_k} L(z_k) + 2\alpha_{n_k} \| \zeta(z_k, \omega)\| \rightarrow 0 \qquad \text{as $k \rightarrow \infty$},
$$
which implies that $\lim_{k \rightarrow \infty} z_k^+ = \lim_{k \rightarrow \infty} z_k = z$. 

We furthermore deduce that $w^\cX_k(\omega)$ and $w_k^g(\omega)$ are bounded almost surely. Indeed, $w_k^g(\omega)$ is bounded since $g$ is locally Lipschitz and $z_k^+$ are bounded. Moreover,  Lemma~\ref{lem:prox} implies
$$
\|w_k^g(\omega) + w_k^\cX(\omega)\|= \left\|\tfrac{1}{\alpha_{n_k}} \left[z_k - \alpha_{n_k} \zeta(z_k, \omega) - z_k^+\right] \right\|\leq 2\cdot L(z_k) + 3\cdot \sup_{k \geq 1} \|\zeta(z_k,\omega)\|.
$$ Observe that the right hand-side is a.s. bounded by item~\ref{it:loc_integrability} of Assumption~\ref{ass:stand_ass2}. Thus, since $w_k^g(\omega) + w_k^\cX(\omega)$ and $w_k^g(\omega)$ are a.s. bounded, it follows that $w_k^\cX(\omega)$ must also be a.s. bounded, as desired.

Appealing to outer semicontinuity of $\partial f, \partial g,$ and $N^{L}_{\cX}$ (e.g. \cite[Propostion 6.6]{RW98}), the inclusion $N^{L}_{\cX}\subset N_{\cX}$, and the boundedness of $\{w^f_k\}, \{w^g_k(\omega)\},$ and $\{w^\cX_k(\omega)\}$, it follows that 
$$
\dist( w^f_k, \partial f(z)) \rightarrow 0; \qquad \dist( w^g_k(\omega), \partial g(z)) \rightarrow 0;\qquad \dist(w^\cX_k(\omega), N_{\cX}(z)) \rightarrow 0,
$$
as $ k \rightarrow \infty$. Consequently, almost surely we have that 
\begin{align*}
&\dist\left(-w_k^f -  \tfrac{1}{\alpha_{n_k}} \left[z_k - \alpha_{n_k} \zeta(z_k, \omega) - T_{\alpha_{n_k}}(z_k - \alpha_{n_k}\zeta(z_k, \omega))\right], G(z)\right)\\
&\leq \dist( w^f_k, \partial f(z)) + \dist( w^g_k(\omega), \partial g(z)) + \dist(w^\cX_k(\omega), N_{\cX}(z)) \rightarrow 0,
\end{align*}
as desired.
\end{proof}

\begin{claim}\label{subclaim:integrable}
Let $L_f := \sup_{k \geq 1}\dist(0, \partial f(z_k))$ and $ L_g := \sup_{k \geq 1}L(z_{k})$. Then for all $k\geq 0$, the functions
\begin{align*}
&\dist\left(-w_k^f -  \tfrac{1}{\alpha_{n_k}} \left[z_k - \alpha_{n_k} \zeta(z_k, \omega) - T_{\alpha_{n_k}}(z_k - \alpha_{n_k}\zeta(z_k, \omega))\right], G(z)\right) 
\end{align*}
 are uniformly dominated by an integrable function in $\omega$.
\end{claim}
\begin{proof}[Proof of Subclaim~\ref{subclaim:integrable}]
For each $k$, Lemma~\ref{lem:prox} implies the bound 
$$
 \left\| \tfrac{1}{\alpha_{n_k}} \left[z_k - \alpha_{n_k} \zeta(z_k, \omega) - T_{\alpha_{n_k}}(z_k - \alpha_{n_k}\zeta(z_k, \omega))\right] \right\| \leq 2L_g + 3\cdot \|\zeta(z_k, \omega)\|.
$$
Consequently, we have 
\begin{align*}
&\dist\left(-w_k^f -  \tfrac{1}{\alpha_{n_k}} \left[z_k - \alpha_{n_k} \zeta(z_k, \omega) - T_{\alpha_{n_k} r}(z_k - \alpha_{n_k}\zeta(z_k, \omega))\right], G(z)\right) \\
 &\leq L_f + 2L_g + 3\cdot \|\zeta(z_k, \omega)\| + \dist(0, G(z))\\
 &\leq  L_f + 2L_g + 3\cdot \sup_{k \geq 1} \|\zeta(z_k,\omega)\| +\dist(0, \partial f(z) + \partial g(z)),
\end{align*}
which is integrable by Item~\ref{it:loc_integrability} of Assumption~\ref{ass:stand_ass2}.
\end{proof}

Applying the dominated convergence theorem, it follows that
$$
\EE_{\omega}\left[ \dist\left(-w_k^f -  \tfrac{1}{\alpha_{n_k}} \left[z_k - \alpha_{n_k} \zeta(z_k, \omega) - T_{\alpha_{n_k}}(z_k - \alpha_{n_k}\zeta(z_k, \omega))\right], G(z)\right)\right] \rightarrow 0
$$
as $k \rightarrow \infty$. Notice the simple fact that for any real sequence $b_k \to 0 $, it must be that $\frac{1}{n} \sum_{k=1}^{n}b_k \rightarrow 0$ as $n \rightarrow \infty$. Consequently 
\begin{align*}
&\dist\left(\frac{1}{n}\sum_{k=1}^n\left(-w_k^f - \tfrac{1}{\alpha_{n_k}} \EE_{\omega} \left[z_k - \alpha_{n_k} \zeta(z_k, \omega) - T_{\alpha_{n_k}}(z_k - \alpha_{n_k}\zeta(z_k, \omega))\right]\right), G(z)\right)\\
& \leq \frac{1}{n}\sum_{k=1}^n\EE_{\omega}\left[ \dist\left(-w_k^f -  \tfrac{1}{\alpha_{n_k}} \left[z_k - \alpha_{n_k} \zeta(z_k, \omega) - T_{\alpha_{n_k}}(z_k - \alpha_{n_k}\zeta(z_k, \omega))\right], G(z)\right)\right] \rightarrow 0
\end{align*}
as $n \rightarrow \infty$. This completes the proof.
\end{proof}
We have now verified all parts of Theorem~\ref{thm:duchi_arzela}. Therefore, the proof is complete. 

\subsection{Verifying Assumption~\ref{ass:bhat} for composite problems}
\begin{proof}[Proof of Lemma~\ref{lem:chaion_rule_gives_assump2}]
The argument is nearly identical to that of Lemma~\ref{lem:chaion_rule_gives_assump}, with one additional subtlety that $G$ is not necessarily outer-semicontinuous. Let $z\colon\R^d\to\cX$ be an arc. Since $f$, $g$, and $\cX$ admit a chain rule, we deduce 
\begin{equation*}\label{eqn:rand_stuff1}
(f\circ z)'(t)=\langle \partial f(z(t)),\dot{z}(t)\rangle \quad (g\circ z)'(t)=\langle \partial g(z(t)),\dot{z}(t)\rangle,\quad  \textrm{and}\quad 0=\langle N_{\cX}(z(t)),\dot z(t)\rangle,
\end{equation*}
for a.e. $t\geq 0$.
Adding the three equations yields
$$(\varphi\circ z)'(t)=-\langle G(z(t)),\dot{z}(t)\rangle \quad\textrm{ for a.e. }t\geq 0.$$
Suppose now that $z(\cdot)$ satisfies 
$\dot{z}(t)\in -G(z(t))$ for a.e. $t\geq 0$.
Then the same linear algebraic argument as in Lemma~\ref{lem:chaion_rule_gives_assump} yields the equality $\|\dot{z}(t)\|= \dist(0;G(z(t)))$ for a.e. $t\geq 0$ and consequently the equation \eqref{eqn:lyap2}. 

To complete the proof, we must only show that property \ref{ass:b2hat} of Assumption~\ref{ass:bhat} holds. To this end, suppose that $z(0)$ is not composite critical and let $T>0$ be arbitrary.
Appealing to \eqref{eqn:lyap2}, clearly $\sup_{t\in [0,T]} \varphi(z(t))\leq \varphi(z(0))$. Thus we must only argue $\varphi(z(T))<\varphi(z(0))$. According to \eqref{eqn:lyap2}, if this were not the case, then we would deduce $\dist(0;G(z(t)))=0$ for a.e. $t\in [0,T]$. Appealing to the equality $\|\dot z\|=\dist(0;G(z(t)))$, we therefore conclude $\|\dot{z}\|=0$ for a.e. $t\in [0,T]$. Since $z(\cdot)$ is absolutely continuous, it must therefore be constant $z(\cdot)\equiv z(0)$, but this is a contradiction since $0\notin G(z(0))$. Thus property \ref{ass:b2hat} of Assumption~\ref{ass:bhat} holds, as claimed.
\end{proof}

\begin{proof}[Proof of Corollary~\ref{cor:last_cor_semi_omin_sink}]
The result follows immediately from Lemma~\ref{thm:main_cray_thmhat}, once we show that Assumption~\ref{ass:bhat} holds. Since $f$ and $g$ are definable in an o-minimal structure, Theorem~\ref{thm:chain_strat} implies that $f$ and $ g$ admit the chain rule. The same argument as in Theorem~\ref{thm:chain_strat} moreover implies $\cX$ admits the chain rule as well. Therefore, Lemma~\ref{lem:chaion_rule_gives_assump2} guarantees that the descent property of Assumption~\ref{ass:bhat} holds. Thus we must only argue the weak Sard property of Assumption~\ref{ass:bhat}. To this end, since $f$, $g$, and $\cX$ are definable in an o-minimial structure, there exist Whitney $C^{d}$-stratifications $\mathcal{A}_f$, $\mathcal{A}_g$, and $\mathcal{A}_{\cX}$  of $\gph f$, $\gph g$, and $\cX$, respectively. Let $\Pi\mathcal{A}_f$ and $\Pi\mathcal{A}_g$ be the Whitney stratifications of $\R^d$ obtained by applying the coordinate projection $(x,r)\mapsto x$ to each stratum in $\mathcal{A}_f$ and $\mathcal{A}_g$. Appealing to \cite[Theorem 4.8]{Dries-Miller96}, we obtain a Whitney $C^d$-stratification $\mathcal{A}$ of $\R^d$ that is compatible with $(\Pi\mathcal{A}_f,\Pi\mathcal{A}_g, \mathcal{A}_\cX)$. That is, for every strata $M\in \mathcal{A}$ and $L\in \Pi\mathcal{A}_f\cup \Pi\mathcal{A}_g\cup \mathcal{A}_\cX$, either $M\cap L=\emptyset$ or $M\subseteq L$. 

Consider an arbitrary stratum $M\in\mathcal{A}$ intersecting $\cX$ (and therefore contained in $\cX$) and a point $x\in M$. Consider now the (unique) strata $M_f\in \Pi\mathcal{A}_{f}$, $M_g\in \Pi\mathcal{A}_{g}$, and $M_{\cX}\in \mathcal{A}_\cX$ containing $x$. Let $\widehat{f}$ and $\widehat{g}$ be $C^d$-smooth functions agreeing with $f$ and $g$ on a neighborhood of $x$ in $ M_f$ and $ M_g$, respectively. Appealing to \eqref{eqn:subdiff_inclusion}, we conclude 
$$\partial f(x)\subset \nabla \widehat f(x)+N_{ M_f}(x) \qquad \textrm{and}\qquad \partial g(x)\subset \nabla \widehat g(x)+N_{M_g}(x).$$
The Whitney condition in turn directly implies
$N_{\cX}(x)\subset N_{M_{\cX}}(x).$ Hence summing yields 
\begin{align*}
\partial f(x)+\partial g(x)+N_{\cX}(x)&\subset \nabla (\widehat f+\widehat g)(x)+N_{ M_f}(x)+N_{ M_g}(x)+N_{M_{\cX}}(x)\\
&\subset \nabla (\widehat f+\widehat g)(x)+N_{M}(x),
\end{align*}
where the last inclusion follows from the compatibility $M\subset M_f$ and $M\subset M_g$. 
Notice that $\widehat f+\widehat g$ agrees with $f+g$ on a neighborhood of $x$ in $M$. Hence if the inclusion, $0\in \partial f(x)+\partial g(x)+N_{\cX}(x)$, holds it must be that $x$ is a critical point of the $C^d$-smooth function $f+g$ restricted to $M$, in the classical sense. Applying the standard Sard's theorem to each manifold $M$, the result follows. 
\end{proof}

\let\oldbibliography\thebibliography
\renewcommand{\thebibliography}[1]{%
	\oldbibliography{#1}%
	\setlength{\itemsep}{-2pt}%
}

	%		\small
			\bibliographystyle{plain}
	\bibliography{bibliography}
\end{document}